\numberwithin{equation}{section}
\theoremstyle{plain}
\newtheorem{theorem}{Theorem}[section]
\newtheorem{lemma}[theorem]{Lemma}
\newtheorem{corollary}[theorem]{Corollary}
\newtheorem{proposition}[theorem]{Proposition}
\theoremstyle{definition}
\newtheorem{definition}[theorem]{Definition}
\theoremstyle{remark}
\newtheorem{remark}[theorem]{Remark}
\newcommand{\labtequ}[2]{
	\begin{equation} \label{#1} 	\begin{minipage}[c]{0.9\textwidth} \center{#2} \end{minipage} \ignorespacesafterend \end{equation} }
\newcommand{\Ex}{\mathbb E}
\renewcommand{\Pr}{\mathbb{P}}
\newcommand{\R}{\ensuremath{\mathbb R}}
\newcommand{\C}{\ensuremath{\mathbb C}}
\newcommand{\Z}{\ensuremath{\mathbb Z}}
\newcommand{\cp}{\mathrm{cap}}
\newcommand{\Lr}[1]{Lemma~\ref{#1}}
\newcommand{\Tr}[1]{Theorem~\ref{#1}}
\DeclareMathAlphabet{\pazocal}{OMS}{zplm}{m}{n}
\begin{document}
	
\title{Analyticity of Gaussian free field percolation observables}

\author{Christoforos Panagiotis\footnotemark[1]\footnote{Universit\'e de Gen\`eve,~christoforos.panagiotis@unige.ch}~ and~Franco Severo\footnotemark[2]\footnote{Universit\'e de Gen\`eve,~franco.severo@unige.ch}
}

\thispagestyle{empty}
\maketitle

\begin{abstract}
	We prove that cluster observables of level-sets of the Gaussian free field on the hypercubic lattice $\Z^d$, $d\geq3$, are analytic on the whole off-critical regime $\R\setminus\{h_*\}$. This result concerns in particular the percolation density function $\theta(h)$ and the (truncated) susceptibility $\chi(h)$.
	As an important step towards the proof, we show the exponential decay in probability for the capacity of a finite cluster for all $h\neq h_*$, which we believe to be a result of independent interest. We also discuss the case of general transient graphs.
\end{abstract}

\section{Introduction}\label{sec:intro}

\textbf{Motivation and main results.} We consider the level-set percolation for the Gaussian free field (GFF) on a connected, locally finite, transient graph $G=(V,E)$. Of particular interest is the case of the hypercubic lattice $\Z^d$ in dimensions $d \geq 3$. The Gaussian free field $\varphi = (\varphi_x)_{x\in V}$ is defined as the centered Gaussian process with covariance
$\Ex(\varphi_x \varphi_y) = g(x,y)$ for all $x,y\in V$,
where $g(\cdot,\cdot)$ stands for the Green function of the simple random walk on $G$. 
Given $h\in \R$, we are interested in the excursion set
$\{\varphi\geq h\}\coloneqq  \{x\in V:\, \varphi_x\geq h\}$ seen as a random subgraph of $G$ (with the induced adjacency).
As $h$ varies, this defines a percolation model for which one may expect to see a phase transition in $h$ from a percolative regime -- where $\{\varphi\geq h\}$ contains an infinite connected component -- to a non-percolative regime -- where all the clusters of $\{\varphi\geq h\}$ are finite. Consider the \emph{percolation density} function defined by
$$\theta(h)\coloneqq \Pr[|\pazocal{C}_o(h)|=\infty],$$
where $\pazocal{C}_o(h)$ denotes the connected component (or cluster) of a fixed origin $o\in V$ in $\{\varphi\geq h\}$.
We can then define the percolation \emph{critical point} $h_*$ given by
$$h_*(G)\coloneqq \sup\{h\in \R:\, \theta(h)>0\}.$$

The first and most fundamental question in percolation theory is the existence of a non-trivial phase transition, which in our case corresponds to $-\infty < h_*< +\infty$. A soft argument due to Bricmont, Lebowitz \& Maes \cite{BricmontLJM87} shows that the GFF percolates above any negative level, i.e.~$h_*(G)\geq 0~(>-\infty)$ for every transient graph $G$ -- it has been recently proved \cite{DrePreRod} that $h_*(\Z^d)>0$ for all $d\geq3$. The opposite inequality $h_*<+\infty$ is more delicate. In the special case $G=\Z^d$, $d\geq3$, this was proved by Rodriguez \& Sznitman \cite{RodriguezSznitman13} -- the case $d=3$ had already been obtained in \cite{BricmontLJM87}.
Other graphs have been proved to satisfy $h_*<+\infty$ \cite{MR3492939}, but this remains open for more general transient graphs. 
Remarkably, this is in contrast with the classical Bernoulli percolation, for which proving the existence of percolative regime is in general much harder than proving the existence of a non-percolative regime -- see \cite{DGRSY20}.

Once the existence of a phase transition is established, the next important question concerns the uniqueness of critical point, i.e.~whether $h_*$ defined above is the only value at which one can see a qualitative change in the large-scale behavior of the model. This immediately raises the question of whether there are critical points at other values of $h$ and how to define them. There are two main approaches to this question. 

From a percolation theory perspective, a natural approach consists in defining alternative critical parameters $\bar{h}$ and $h_{**}$, which characterize a strongly percolative and strongly non-percolative regimes, respectively. In the last decade, this approach has been successfully implemented in the case $G=\Z^d$: definitions appeared in many works --  see e.g.~\cite{RodriguezSznitman13,PopovRath15,MR3390739,MR3417515} -- and more recently it has been proved by Duminil-Copin, Goswami, Rodriguez \& Severo \cite{DGRS20} that indeed $\bar{h}=h_*=h_{**}$. This equality is often referred to as \emph{``sharpness''} of phase transition and is also expected to hold for other transient graphs, but this remains open. The corresponding result for Bernoulli percolation on $\Z^d$ was obtained in the highly influential works of Aizenman \& Barsky \cite{AizBar87} and Menshikov \cite{Men86} (on the subcritical phase) and Grimmett \& Marstrand \cite{GriMar90} (on the supercritical phase).

From the point of view of statistical physics, a classical approach consists in considering a function (such as $\theta$) describing the macroscopic behavior of the model, and define the critical points to be the singularities of that function. Uniqueness of critical point then corresponds to the analyticity of this function on $\R\setminus\{h_*\}$, which is precisely the main result of the present article. Let us mention that the corresponding result for Bernoulli percolation on $\Z^d$ has been proved on the subcritical phase by Kesten \cite{Kes81} and on the supercritical phase by Geogarkopoulos \& Panagiotis \cite{GeoPan1820}. Hermon \& Hutchcroft \cite{HH19} also proved a corresponding result for Bernoulli percolation on non-amenable transitive graphs.

In order to state our main result, we need to introduce some notation. Let $\pazocal{X}$ denote the family of all \emph{finite} subsets of $V$. We say that a \emph{cluster observable} $F:\pazocal{X}\to \C$ has \emph{subexponential growth} if $|F(S)|\leq e^{o(\cp(\overline{S}))}$ as $\cp(\overline{S})\to\infty$. Here $\cp(\overline{S})$ denotes the (harmonic) capacity of $\overline{S}$, the (vertex) closure of $S$ -- see Section~\ref{sec:potential_analytic} for definitions. Finally, for a cluster observable $F:\pazocal{X}\to \C$ and a subset $X\in \pazocal{X}$, consider the function $\overline{F}^X:\R\to \C$ defined by 
$$\overline{F}^X(h)\coloneqq \Ex[F(\pazocal{C}_X(h)) \mathbbm{1}_{|\pazocal{C}_X(h)|<\infty}],$$
where $\pazocal{C}_X(h)$ denotes the union of all clusters in $\{\varphi\geq h\}$ intersecting $X$.

\begin{theorem}\label{thm:main}
	Let $G=\Z^d$, $d\geq 3$. Then for every observable $F:\pazocal{X}\to \C$ of subexponential growth and every $X\in\pazocal{X}$, the function $\overline{F}^X$ well-defined and analytic on $\R\setminus\{h_*\}$.
\end{theorem}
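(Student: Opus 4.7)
The plan is to represent $\overline{F}^X(h)$ as an absolutely convergent series of per-configuration contributions each of which is entire in $h$, and to transfer the exponential decay of the capacity of finite clusters (the second main result advertised in the abstract) into uniform bounds on small complex neighborhoods of any $h_0\in\R\setminus\{h_*\}$. Concretely, writing $\partial S$ for the outer vertex boundary of a finite $S\subseteq V$ and $\overline{S}=S\cup\partial S$, the event $\{\pazocal{C}_X(h)=S\}$ equals $\{\varphi_x\geq h\ \forall x\in S\}\cap\{\varphi_y<h\ \forall y\in\partial S\}$ provided $X\subseteq S$ and every connected component of $S$ meets $X$, so
$$\overline{F}^X(h)=\sum_{S}F(S)\,P_S(h),\qquad P_S(h)\coloneqq\Pr[\pazocal{C}_X(h)=S],$$
the sum ranging over such admissible finite $S$.

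Next I would analytically continue each $P_S$ separately. The marginal law of $(\varphi_x)_{x\in\overline{S}}$ is a centered Gaussian with covariance matrix $G_{\overline{S}}\coloneqq(g(x,y))_{x,y\in\overline{S}}$; writing $Q_{\overline{S}}\coloneqq G_{\overline{S}}^{-1}$ and substituting $\psi=\varphi-h\mathbf{1}$ yields
$$P_S(h)=\frac{1}{(2\pi)^{|\overline{S}|/2}\det(G_{\overline{S}})^{1/2}}\int_{R_S}\exp\!\Bigl(-\tfrac12(\psi+h\mathbf{1})^{T}Q_{\overline{S}}(\psi+h\mathbf{1})\Bigr)\,d\psi,$$
where $R_S=\{\psi\in\R^{\overline{S}}:\psi_S\geq 0,\ \psi_{\partial S}<0\}$ is independent of $h$. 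Since the exponent is a polynomial of degree two in $h$, $P_S$ extends to an entire function of $h\in\C$. Expanding the quadratic and using the potential-theoretic identity $\mathbf{1}^{T}Q_{\overline{S}}\mathbf{1}=\cp(\overline{S})$ (a consequence of $G_{\overline{S}}e_{\overline{S}}=\mathbf{1}$ on $\overline{S}$), I obtain, for $h=h_0+i\tau$, the pointwise bound
$$|P_S(h_0+i\tau)|\leq e^{\tau^{2}\cp(\overline{S})/2}\,P_S(h_0),$$
since the only change in modulus relative to the real-$h_0$ integrand comes from the quadratic-in-$h$ piece, the linear-in-$h$ cross term $e^{-h\,\mathbf{1}^{T}Q_{\overline{S}}\psi}$ being purely oscillatory.

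To conclude I would sum over $S$. Combining this bound with the subexponential growth $|F(S)|\leq C_{\varepsilon}e^{\varepsilon\cp(\overline{S})}$ (any $\varepsilon>0$) and the capacity-decay theorem, which supplies some $c=c(h_0)>0$ such that $\Ex[e^{c\cp(\overline{\pazocal{C}_X(h_0)})}\mathbbm{1}_{|\pazocal{C}_X(h_0)|<\infty}]<\infty$, gives
$$\sum_S |F(S)|\,|P_S(h_0+i\tau)|\leq C_{\varepsilon}\,\Ex\!\bigl[e^{(\varepsilon+\tau^{2}/2)\cp(\overline{\pazocal{C}_X(h_0)})}\mathbbm{1}_{|\pazocal{C}_X(h_0)|<\infty}\bigr]<\infty$$
whenever $\varepsilon+\tau^{2}/2<c$. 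For $\varepsilon$ small and $|\tau|<\sqrt{2c}$, the series for $\overline{F}^X(h_0+z)$ converges absolutely and uniformly on a complex disk about $h_0$, and Weierstrass's theorem yields analyticity at $h_0$; since $h_0\in\R\setminus\{h_*\}$ is arbitrary, analyticity on the whole off-critical regime follows.

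The main obstacle will be ensuring that the moment bound above is uniform as the real part of $h$ varies in a real neighborhood of $h_0$. In the subcritical regime $h_0>h_*$ this is clean: by the monotone coupling $\pazocal{C}_X(h)\subseteq\pazocal{C}_X(h')$ for $h\geq h'$, one dominates all capacities in the window by that at a single reference $h'>h_*$, which already has finite exponential moments by the capacity-decay theorem. In the supercritical regime $h_0<h_*$ this monotonicity points the wrong way, since lowering $h$ enlarges the cluster, so the argument must extract uniformity directly from a locally uniform form of the capacity decay; this is presumably built into (or easily derived from) the auxiliary theorem, and reflects the familiar asymmetry between subcritical and supercritical analyticity already visible for Bernoulli percolation in the works of Kesten and of Georgakopoulos--Panagiotis cited in the introduction.
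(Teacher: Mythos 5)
Your proposal is correct and follows essentially the same route as the paper: decompose $\overline{F}^X$ over cluster shapes, extend each term $\Pr[\pazocal{C}_X(h)=S]$ to an entire function whose growth in the imaginary direction is $\exp\{\tau^2\cp(\overline{S})/2\}$ (your identity $\mathbf{1}^{T}G_{\overline{S}}^{-1}\mathbf{1}=\cp(\overline{S})$ is exactly the Dirichlet energy $\pazocal{E}(f_{\overline{S}},f_{\overline{S}})$ of the harmonic potential used in the paper's Cameron--Martin shift), and then sum using the uniform exponential decay of the capacity of finite clusters. The uniformity in the real part that you flag as the main obstacle is indeed built into Theorem~\ref{thm:exp_Zd}, which is stated uniformly over $\{|h-h_*|\geq\varepsilon\}$, so no separate monotonicity argument is needed.
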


Notice that the analyticity of the percolation density $\theta$ on $\R\setminus\{h_*\}$ follows from Theorem~\ref{thm:main} by taking $F\equiv 1$, for which $\overline{F}^{\{o\}}=1-\theta$. Besides $\theta$, other functions of interest are the \emph{(truncated) susceptibility}
$$\chi(h)\coloneqq \Ex[|\pazocal{C}_o(h)| \mathbbm{1}_{|\pazocal{C}_o(h)|<\infty}],$$
the \emph{(finite) open clusters per vertex}
$$\kappa(h)\coloneqq \Ex[|\pazocal{C}_o(h)|^{-1} \mathbbm{1}_{|\pazocal{C}_o(h)|<\infty}],$$
the \emph{truncated $k$ point function}
$$\tau^{f}_X(h)\coloneqq \Pr[\pazocal{C}_X(h) \text{ connected},~|\pazocal{C}_X(h)|<\infty]$$
and the \emph{(non-truncated) $k$ point function}
$$\tau_X(h)\coloneqq \Pr[\pazocal{C}_X(h) \text{ connected}],$$
where $X\in\pazocal{X}$ with $|X|=k$. The following is a corollary of Theorem~\ref{thm:main}.

\begin{corollary}\label{cor:main}
	Let $G=\Z^d$, $d\geq 3$. Then all the functions $\theta(h)$, $\chi(h)$, $\kappa(h)$, $\tau^{f}_X(h)$ and $\tau_X(h)$, $X\in\pazocal{X}$, are analytic on $\R\setminus\{h_*\}$. 
\end{corollary}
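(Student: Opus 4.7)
\emph{Proof proposal for Corollary~\ref{cor:main}.} The plan is to identify, for each of the five functions, a cluster observable $F:\pazocal{X}\to\C$ of subexponential growth and a set $X\in\pazocal{X}$ such that the function equals (or is a finite linear combination of) functions of the form $\overline{F}^X$. Theorem~\ref{thm:main} will then deliver analyticity on $\R\setminus\{h_*\}$ for free.

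Three of the four easy cases are essentially immediate. For $\theta$ take $F\equiv 1$ and $X=\{o\}$, as already noted after the statement, so that $\theta=1-\overline{F}^{\{o\}}$. For $\kappa$ take $F(S)=|S|^{-1}$, and for $\tau^f_X$ take $F(S)=\mathbbm{1}_{S\text{ connected in }G}$; both observables are bounded by $1$ and hence trivially of subexponential growth. For $\chi$ take $F(S)=|S|$, so that $\chi=\overline{F}^{\{o\}}$. The only point to verify here is subexponential growth, which I would obtain from the isocapacitary inequality on $\Z^d$, $d\geq3$: there exists $c=c(d)>0$ with $\cp(\overline{S})\geq c\,|S|^{(d-2)/d}$ for every finite $S\subset\Z^d$ (the extremal set being essentially a ball, for which $|B_r|\asymp r^d$ and $\cp(B_r)\asymp r^{d-2}$). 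Hence $|S|=O\bigl(\cp(\overline{S})^{d/(d-2)}\bigr)$, so $\log|S|=o(\cp(\overline{S}))$.

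The only case that does not reduce to a single $\overline{F}^X$ is $\tau_X$. I would split according to whether $\pazocal{C}_X(h)$ is finite: the finite part is exactly $\tau^f_X$, while the infinite part is the probability that all vertices of $X$ lie in a common infinite cluster. Using the almost-sure uniqueness of the infinite cluster in $\{\varphi\geq h\}$ on $\Z^d$ (standard via a Burton--Keane argument adapted to GFF level-sets), this event coincides a.s.\ with $\{X\subseteq \pazocal{C}_\infty(h)\}$ when $\theta(h)>0$, and is null when $\theta(h)=0$. Expanding $\mathbbm{1}_{X\subseteq \pazocal{C}_\infty}=\prod_{v\in X}\bigl(1-\mathbbm{1}_{|\pazocal{C}_v(h)|<\infty}\bigr)$ and noting that $\prod_{v\in Y}\mathbbm{1}_{|\pazocal{C}_v(h)|<\infty}=\mathbbm{1}_{|\pazocal{C}_Y(h)|<\infty}$ yields the inclusion--exclusion identity
$$\tau_X(h)\;=\;\tau^f_X(h)+\sum_{Y\subseteq X}(-1)^{|Y|}\Pr[|\pazocal{C}_Y(h)|<\infty],$$
valid for every $h\neq h_*$ (and easily checked directly on $\{\theta=0\}$, where both sides reduce to $\tau^f_X$). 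Each term on the right is $\overline{F}^Y$ with $F\equiv 1$, hence analytic on $\R\setminus\{h_*\}$ by Theorem~\ref{thm:main}, and so is their finite linear combination. There is no real obstacle here beyond quoting the isocapacitary inequality and the uniqueness of the infinite cluster, both of which are standard inputs.
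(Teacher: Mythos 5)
Your proposal is correct and follows essentially the same route as the paper: the first four functions are handled by choosing the obvious observables (with subexponential growth for $F(S)=|S|$ checked via the isocapacitary bound $\cp(\overline S)\geq c|S|^{(d-2)/d}$, which the paper also invokes just before Corollary~\ref{cor:vol_decay}), and $\tau_X$ is reduced to $\tau^f_X$ plus an inclusion--exclusion over $\Pr[|\pazocal{C}_Y(h)|<\infty]$ using uniqueness of the infinite cluster, exactly as in the paper's displayed identity.
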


The only function for which Corollary~\ref{cor:main} does not follow readily from Theporem~\ref{thm:main} is the (non-truncated) $k$ point function $\tau_X(h)$. In order to deduce its analyticity, simply notice that by the uniqueness of the infinite cluster (see e.g.~\cite[Remark 1.6]{RodriguezSznitman13})
and the inclusion-exclusion principle, we can write
\begin{align*} 
	\tau_X(h)
	&=\tau^{f}_X(h)+1-\Pr\big[\bigcup_{x\in X}\{|\pazocal{C}_x(h)|<\infty\}\big]\\
	&=\tau^{f}_X(h)+1-\sum_{\emptyset\neq Y\subset X} (-1)^{|Y|+1}\Pr[|\pazocal{C}_Y(h)|<\infty].
\end{align*}
We remark that the analyticity of $\tau_X(h)$ may break down on the supercritical phase if uniqueness of infinite cluster does not hold. Indeed, for Bernoulli percolation there are examples \cite{HH19} of transitive non-amenable graphs for which $\tau$ has a discontinuity at the uniqueness critical point $p_u$, which in this case satisfies $p_c<p_u<1$.

Our proof of analyticity of $\overline{F}^X$ makes crucial use of the following convenient series decomposition. 
For every integer $N\geq 1$ and $h\in\R$, consider the event
$$A^X_N(h)\coloneqq \{|\pazocal{C}_X(h)|<\infty\}\cap\{N-1\leq \cp(\overline{\pazocal{C}_X(h)})<N\}.$$
We can then write
\begin{equation}\label{eq:series}
	\overline{F}^X(h)=\sum_{N=1}^\infty \Ex[F(\pazocal{C}_X(h)) \mathbbm{1}_{A^X_N(h)}].
\end{equation}
With the series \eqref{eq:series} in hands, it is enough to show that each term $\overline{F}^X_N(h)\coloneqq \Ex[F(\pazocal{C}_X(h)) \mathbbm{1}_{A^X_N(h)}]$ can be analytically extended to a domain of $\C$ containing $\R\setminus\{h_*\}$ on which the series converges locally uniformly. A crucial step to establish such a convergence is proving that $\Pr[A^X_N(h)]$ decays (uniformly) exponentially in $N$ for $h\neq h_*$. This is the content of the following theorem, which we believe to be of independent interest.

\begin{theorem}\label{thm:exp_Zd}
	Let $G=\Z^d$, $d\geq 3$. Then for every $\varepsilon>0$ and $X\in\pazocal{X}$, there exists $c=c(X,\varepsilon,d)>0$ such that 
	$\Pr[A^X_N(h)]\leq e^{-cN}$ for every $N\geq 1$ and every $h\in\R$ with $|h-h*|\geq\varepsilon$.
\end{theorem}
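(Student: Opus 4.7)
I would split the argument according to whether $h$ is subcritical ($h\geq h_*+\varepsilon$) or supercritical ($h\leq h_*-\varepsilon$). In the subcritical regime the capacity is dominated by the cluster volume: since the equilibrium measure takes values in $[0,1]$ one has $\cp(\overline{S})\leq |\overline{S}|\leq (2d+1)|S|$, where the second inequality accounts for the outer boundary added by the vertex closure. Hence
\[
A^X_N(h)\subset \{|\pazocal{C}_X(h)|\geq (N-1)/(2d+1) - |X|\},
\]
and the problem reduces to exponential decay of the cluster volume. That decay is the sharpness result of \cite{DGRS20} for every $h>h_*$, and uniformity in $h\geq h_*+\varepsilon$ follows by stochastic monotonicity, since $\pazocal{C}_X(h)$ is non-increasing in $h$. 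So the subcritical case is essentially off-the-shelf.

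The supercritical range is the crux. The main input is again \cite{DGRS20}: since $h_{**}=h_*$, for every $h\leq h_*-\varepsilon$ one has exponential decay in $R$ of the probability that $\pazocal{C}_X(h)$ reaches $\partial B_R(X)$ while remaining finite. The task is to upgrade this box-distance exponential decay to exponential decay in the capacity of $\overline{\pazocal{C}_X(h)}$. My plan is to (i) fix a constant scale $R=R(\varepsilon)$ large enough that a single ``local disconnection'' event inside $B_R$ has probability at most some $q<1$, uniformly in $h\leq h_*-\varepsilon$; (ii) cover $\overline{\pazocal{C}_X(h)}$ by boxes of side $R$ and, using subadditivity of capacity together with $\cp(B_R)\leq CR^{d-2}$, extract at least $k\geq c(\varepsilon)N$ of them; (iii) observe that on $A^X_N(h)$ each extracted box forces the cluster of $X$ to enter that box without escaping to infinity through its vicinity; (iv) combine these $\Theta(N)$ local disconnection events via decoupling inequalities for the GFF (in the spirit of Popov--Teixeira and Sznitman, or the renormalization scheme developed in \cite{DGRS20}) to conclude $\Pr[A^X_N(h)]\leq e^{-cN}$.

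The hardest step will be (iv). The GFF has only polynomially decaying correlations, so a decoupling procedure applied $\Theta(N)$ times can in principle destroy the exponential rate; the renormalization must be designed so that the overhead accumulated across scales remains $e^{o(N)}$ while the main gain stays $q^{\Theta(N)}$. A related subtlety is formalizing the ``local disconnection'' of step~(iii) in terms of sufficiently well-separated boxes to make the decoupling effective, and in such a way that one really exploits the closure $\overline{\pazocal{C}_X(h)}$ rather than the cluster itself -- which is exactly why the event $A^X_N$ is phrased in terms of the closure.
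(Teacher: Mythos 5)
Your subcritical reduction does not work. The inequality $\cp(\overline{S})\leq|\overline{S}|$ correctly shows that $A^X_N(h)$ forces $|\pazocal{C}_X(h)|\gtrsim N$, but the cluster volume of GFF level-sets does \emph{not} decay exponentially in the subcritical phase: the sharpness result of \cite{DGRS20} gives only a stretched-exponential bound $e^{-cR^{\rho}}$, $\rho<1$, on the radius, and the best (and conjecturally optimal) volume bound is $\Pr[M\leq|\pazocal{C}_X(h)|<\infty]\leq\exp\{-cM^{(d-2)/d}\}$ -- this is exactly Corollary~\ref{cor:vol_decay}, which is deduced \emph{from} Theorem~\ref{thm:exp_Zd}, not the other way around. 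Entropic repulsion makes a cluster of volume $M$ cost only $\exp\{-c\,\cp\}\approx\exp\{-cM^{(d-2)/d}\}$, so passing from capacity to volume loses the theorem: your route can give at best $e^{-cN^{(d-2)/d}}$, not $e^{-cN}$. The whole point of phrasing the decay in terms of capacity is that volume and boundary size decay subexponentially in both phases.

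In the supercritical part, your single constant scale $R=R(\varepsilon)$ runs into the obstruction that the paper's introduction spells out and that forces the multi-scale construction. With $R$ fixed you must extract $k\asymp N$ boxes, but the family of boxes has arbitrary geometry inside a region of diameter polynomial in $N$, so the number of possible families is of order $\binom{N^{O(1)}}{cN}\geq e^{cN\log N}$; a per-box gain of a constant $q<1$ yields only $q^{cN}=e^{-c'N}$, and the union bound diverges. Taking $R=R(N)\to\infty$ to tame the entropy then fails on the other side, because the a priori bound on a single bad box from \eqref{eq:sharpness_sup} is only $e^{-cR^{\rho}}$ with $\rho<d-2$, giving a total gain $e^{-cNR^{\rho-(d-2)}}=e^{-o(N)}$. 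There is no admissible single scale, which is why the paper runs a multi-scale scheme (Theorem~\ref{thm:multi-int}) with separate ``bad'' (stretched-exponentially unlikely, controlled by cardinality) and ``very-bad'' (exponentially unlikely in $\cp(B)$, controlled by capacity) events, together with a propagation property and the separating-set Lemma~\ref{lem:cutset} to guarantee that the witnessing boxes carry capacity of order $N$. Two further points you leave unresolved are also essential: (i) a box merely covering $\overline{\pazocal{C}_X(h)}$ witnesses nothing unlikely in the supercritical phase -- the unlikely events live near $\partial\pazocal{C}_o(h)$ and must be propagated inward; and (ii) iterating polynomial-decay decoupling inequalities $\Theta(N)$ times at constant scale is not controllable; the paper instead uses the exact Markov decomposition $\varphi=\psi^B+\xi^B$, independence of local fields on well-separated boxes, and Sznitman's capacity large-deviation bound (Lemma~\ref{lem:multi_Szn}) for the harmonic part.
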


It is easy to prove that there exists $c'=c'(d)>0$ such that $\cp(K)\geq c'|K|^{\frac{d-2}{d}}$ for every subset $K\subset \Z^d$. The following corollary thus follows readily from Theorem~\ref{thm:exp_Zd}.

\begin{corollary}\label{cor:vol_decay}
	Let $G=\Z^d$, $d\geq 3$. Then for every $\varepsilon>0$ and $X\in\pazocal{X}$, there exists $c=c(X,\varepsilon,d)>0$ such that $\Pr[N\leq|\pazocal{C}_X(h)|<\infty]\leq \exp\{-cN^{\frac{d-2}{d}}\}$ for every $N\geq 1$ and every $h\in\R$ with $|h-h*|\geq\varepsilon$.
\end{corollary}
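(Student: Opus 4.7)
The corollary is essentially a mechanical combination of Theorem \ref{thm:exp_Zd} with the capacity–volume isoperimetric inequality stated in the paragraph preceding it. My plan is to first convert the volume lower bound into a capacity lower bound, then partition according to the value of the capacity and apply the exponential bound from Theorem \ref{thm:exp_Zd}.

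More precisely, on the event $\{N \leq |\pazocal{C}_X(h)| < \infty\}$ the closure $\overline{\pazocal{C}_X(h)}$ contains $\pazocal{C}_X(h)$ and hence has at least $N$ vertices, so the isoperimetric inequality $\cp(K) \geq c' |K|^{(d-2)/d}$ yields
\[
\cp\bigl(\overline{\pazocal{C}_X(h)}\bigr) \geq c' N^{(d-2)/d}.
\]
Since the events $A^X_M(h)$, $M \geq 1$, partition $\{|\pazocal{C}_X(h)| < \infty\}$ according to $\lfloor \cp(\overline{\pazocal{C}_X(h)})\rfloor + 1$, this gives the inclusion
\[
\{N \leq |\pazocal{C}_X(h)| < \infty\} \subset \bigcup_{M \geq c' N^{(d-2)/d}} A^X_M(h).
\]

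Now I would invoke Theorem \ref{thm:exp_Zd} to bound each $\Pr[A^X_M(h)] \leq e^{-cM}$ uniformly in $h$ with $|h-h_*| \geq \varepsilon$, and sum the resulting geometric series:
\[
\Pr[N \leq |\pazocal{C}_X(h)| < \infty] \;\leq\; \sum_{M \geq c' N^{(d-2)/d}} e^{-cM} \;\leq\; \frac{e^{-c c' N^{(d-2)/d}}}{1 - e^{-c}},
\]
and absorb constants to obtain the claim with a suitable $c = c(X,\varepsilon,d) > 0$.

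There is no real obstacle here: the only ingredient besides Theorem \ref{thm:exp_Zd} is the capacity–volume inequality, which is standard and can be derived from the variational characterization $\cp(K) \geq |K|^2 / \sum_{x,y \in K} g(x,y)$ together with the uniform bound $\sup_x \sum_{y \in K} g(x,y) \lesssim |K|^{2/d}$ (attained when $K$ is a ball of radius $\sim |K|^{1/d}$). The rest is just bookkeeping.
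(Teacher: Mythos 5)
Your proposal is correct and is exactly the argument the paper has in mind: the paper offers no written proof beyond stating the capacity--volume inequality $\cp(K)\geq c'|K|^{(d-2)/d}$ and asserting that the corollary ``follows readily'' from Theorem~\ref{thm:exp_Zd}, and your decomposition of $\{N\leq|\pazocal{C}_X(h)|<\infty\}$ into the events $A^X_M(h)$ with $M\gtrsim N^{(d-2)/d}$ followed by summing the geometric series is precisely that routine deduction. Your sketch of the isoperimetric inequality via the variational bound and $\sup_x\sum_{y\in K}g(x,y)\lesssim |K|^{2/d}$ is also the standard route, consistent with \eqref{eq:cap_bound}.
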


The order of exponential decay in the upper bounds provided by Theorem~\ref{thm:exp_Zd} and Corollary~\ref{cor:vol_decay} are believed to be the correct ones. Optimizing on the constant $c$ governing the rate of exponential decay is more challenging and beyond the scope of this article, but we believe that our techniques might shed some light on this problem as well.

In recent years, large deviation problems for GFF percolation events has attracted considerable attention -- see e.g.~\cite{MR3417515,nitzschner2017solidification,nitzschner2018,sznitman2018macroscopic,GRS21}. A common feature in these problems is a deep connection with potential theory and in particular the notion of capacity. Typically, the exponential rate of decay is given by the solution of a constrained optimization problem involving the Dirichlet energy and, in some cases, the percolation density $\theta$ as well \cite{sznitman_bulk_2019,sznitman_2020,sznitman_2021}. It is therefore relevant to understand the regularity of $\theta$ in order to study these optimization problems. Motivated by this, it has been recently proved \cite{sznitmanC1_2019} that $\theta$ is $C^{1}$ for the closely related model of random interlacements. We expect that the techniques developed in the present article may be helpful to study similar questions for the random interlacements and other strongly correlated models as well.
The proof of Theorem~\ref{thm:exp_Zd} is based on a coarse graining argument which is very much in the spirit of the works cited above. However, we would like to highlight a key new aspect of our work: we use a coarse graining procedure that involves, at the same time, multiple scales instead of only one. We describe this multi-scale coarse graining scheme in more details in the end of this section.

We now discuss the case of general transient graphs. First, we observe that the (uniform) exponential decay for $\Pr[A^X_N(h)]$ always implies the analyticity of $\overline{F}^X$ -- see Proposition~\ref{prop:analytic}. By a simple shift-argument, one can show that such exponential decay holds for all negative values of $h$ on any transient graph -- see Proposition~\ref{prop:exp_h<0}. This implies the following theorem. Recall that $h_*(G)\geq0$ is known to hold \cite{BricmontLJM87} for every transient graph $G$.

\begin{theorem}\label{thm:analytic_h<0}
	For every transient graph $G$, every observable $F:\pazocal{X}\to \C$ of subexponential growth and every $X\in\pazocal{X}$, the function $\overline{F}^X$ is well-defined and analytic on $(-\infty,0)$.
\end{theorem}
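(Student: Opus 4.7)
The plan is to obtain Theorem~\ref{thm:analytic_h<0} as a direct combination of the two auxiliary propositions announced just above its statement: Proposition~\ref{prop:analytic}, a general transfer principle converting locally uniform exponential decay of $\Pr[A^X_N(h)]$ in $N$ into analyticity of $\overline{F}^X$ through the series decomposition \eqref{eq:series}; and Proposition~\ref{prop:exp_h<0}, providing such decay for $h<0$ on every transient graph via a soft Gaussian shift argument. Well-definedness of $\overline{F}^X(h)$ on $(-\infty,0)$ comes for free from the absolute convergence of \eqref{eq:series} established along the way. So the real content is how one proves the two propositions.

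For the analyticity half, I fix a compact $I\subset(-\infty,0)$ and show that each summand $\overline{F}^X_N(h)$ of \eqref{eq:series} extends holomorphically to a thin complex strip $\{h\in\C:\,\mathrm{Re}(h)\in I,\,|\mathrm{Im}(h)|<\delta\}$. Writing $\overline{F}^X_N(h)$ as a sum over admissible values $K$ of $\overline{\pazocal{C}_X(h)}$, each contribution is an integral of the centered Gaussian density of $(\varphi_x)_{x\in K}$ against $F\,\mathbbm{1}_{A^X_N}$ over a region cut out by the inequalities $\varphi\geq h$ on the cluster and $\varphi<h$ on $K$ minus the cluster; translating the integration variable by $h\,\mathbbm{1}_K$ makes $h$ appear only inside the Gaussian density, which is entire in $h$. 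On the thin strip, controlling the extra Gaussian factor produced by the complex shift and combining with the subexponential growth $|F|\leq e^{o(\cp(\overline{S}))}\leq e^{o(N)}$ on $A^X_N$ together with the hypothesized bound $\Pr[A^X_N(h)]\leq e^{-cN}$ yields absolute, locally uniform convergence of the extended series, hence analyticity on $I$.

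For the exponential decay on $(-\infty,0)$, the key observation is that on $A^X_N(h)$ the field satisfies $\varphi<h$ on the outer boundary $\overline{\pazocal{C}_X(h)}\setminus\pazocal{C}_X(h)$, a set whose capacity is comparable to $\cp(\overline{\pazocal{C}_X(h)})\approx N$. Testing $\varphi$ against the normalized equilibrium measure of this outer boundary produces a centered Gaussian variable $Z$ of variance of order $1/N$ forced to satisfy $Z<h$; for $h<0$ this is a deviation of order $|h|\sqrt{N}$ standard deviations, giving the pointwise bound $\exp(-c\,h^2\,N)$ conditionally on the shape of the candidate $K$. Summing over the $e^{O(N)}$ admissible shapes at capacity scale $N$, encoded via a standard skeleton coarse-graining, delivers $\Pr[A^X_N(h)]\leq e^{-c(h)N}$ with $c(h)$ bounded below on any compact subset of $(-\infty,0)$.

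The main obstacle in the shift step is the combinatorial entropy of cluster shapes at capacity scale $N$: one cannot afford to enumerate all finite connected subsets of that capacity, only skeletons of cardinality $e^{O(N)}$. What makes the argument work cleanly on $(-\infty,0)$ is that the Gaussian gain $c\,h^2\,N$ is already genuinely linear in $N$ for any fixed $h<0$, so once the skeleton encoding is in place the constants close without delicate tuning. In the analyticity step, the corresponding subtlety is to choose the width $\delta$ of the complex strip small enough that the Gaussian factor created by the shift $h\mapsto h+i\delta$ does not swamp the decay rate $c$; this is harmless on compacts of $(-\infty,0)$ but would degenerate as $h\uparrow 0$, in line with the fact that Theorem~\ref{thm:exp_Zd} and its multi-scale coarse graining are needed precisely to push the same scheme through for $h$ merely bounded away from $h_*$.
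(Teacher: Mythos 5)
Your overall architecture is the paper's: Theorem~\ref{thm:analytic_h<0} is indeed obtained by feeding Proposition~\ref{prop:exp_h<0} into Proposition~\ref{prop:analytic}, and your analyticity half is essentially the paper's Proposition~\ref{prop:extension_bound} in finite-dimensional disguise. Translating the integral over $\{\varphi\geq h \text{ on } S,\ \varphi<h \text{ on } \overline{S}\setminus S\}$ by $h\mathbbm{1}_{\overline{S}}$ is the same as the Cameron--Martin shift by $hf_{\overline{S}}$, and the quadratic form $\mathbbm{1}^{T}G_{\overline{S}}^{-1}\mathbbm{1}$ produced by the shift equals $\cp(\overline{S})\leq N$ (since $G_{\overline{S}}\,e_{\overline{S}}=\mathbbm{1}$), which is exactly why the imaginary shift costs only $\exp\{\tfrac12 t^2 N\}$ and the strip argument closes. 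That half is fine.

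The genuine gap is in your proof of the exponential decay for $h<0$. Your plan is: fix a candidate shape, test $\varphi$ against the normalized equilibrium measure of its outer boundary to get a Gaussian of variance $\asymp 1/N$ forced below $h$, and then union-bound over shapes, invoking ``a standard skeleton coarse-graining'' to reduce the entropy to $e^{O(N)}$. This does not work as stated. The number of connected sets containing $X$ with capacity of order $N$ is superexponential in $N$ (on $\Z^d$ such sets can have volume up to $N^{d/(d-2)}$), and reducing this entropy is precisely the content of the multi-scale coarse graining occupying Sections~\ref{sec:exp_decay_Zd}--\ref{sec:decay_verybad} of the paper --- it is anything but a standard black box, and it is developed only for $\Z^d$. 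Theorem~\ref{thm:analytic_h<0}, however, is asserted for \emph{every} transient graph, where no such geometric coarse graining is available; the paper explicitly flags this obstruction. The paper's Proposition~\ref{prop:exp_h<0} sidesteps entropy entirely: for each fixed shape $S$ it proves the pointwise comparison $\Pr[\pazocal{C}_X(h)=S]\leq \exp\{-\tfrac12 h^2(N-1)\}\,\Pr[\pazocal{C}_X(0)=S]$ via the Cameron--Martin density $\exp\{-\tfrac12 h^2\cp(\overline{S})-h\pazocal{E}(f_{\overline{S}},\varphi)\}$, observing that the stochastic factor is $\leq 1$ on $\{\pazocal{C}_X(0)=S\}$ because $\Delta f_{\overline{S}}\leq 0$ is supported on $\partial\overline{S}\subset\partial^{out}S$ where $\varphi\leq 0$, and $h<0$. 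Summing over $S$ then costs nothing, since the events $\{\pazocal{C}_X(0)=S\}$ are disjoint and their probabilities sum to at most $1$: the shape entropy is absorbed by the level-$0$ measure rather than counted. You need this comparison (or an equivalent tilting) in place of the union bound; otherwise the argument fails on general transient graphs and is circular on $\Z^d$, where it would presuppose the machinery of Theorem~\ref{thm:exp_Zd}.
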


Under weaker assumptions on the decay of $\Pr[A^X_N(h)]$, we can prove that $\overline{F}^X$ is smooth for observables $F:\pazocal{X}\to \C$ of (at most) polynomial growth, i.e.~satisfying $|F(S)|\leq C|\cp(\overline{S})|^C$ for all $S\in\pazocal{X}$ and some constant $C\in(0,\infty)$. We say that a sequence $(c_N)_{N\geq1}$ decays super-polynomially fast if $\lim_{N\to\infty} \tfrac{\log(c_N)}{\log N}=-\infty$.
We define
$$\widetilde{h}\coloneqq \sup \left\{h\in (-\infty,h_*) \middle\vert~
\begin{array}{l}
	\text{for every $X\in\pazocal{X}$ there exists $(c_N)_{N\geq1}$}\\ \text{decaying super-polynomially fast such} \\
	\text{that $\Pr[A^X_N(h')]\leq c_N$ for every $h'\leq h$}
\end{array}\right\}.$$
We also define an analogous parameter in the subcritical phase
$$\hat{h}\coloneqq \inf \left\{h\in (h_*,+\infty) \middle\vert~
\begin{array}{l}
	\text{for every $X\in\pazocal{X}$ there exists $(c_N)_{N\geq1}$}\\ \text{decaying super-polynomially fast such} \\
	\text{that $\Pr[A^X_N(h')]\leq c_N$ for every $h'\leq h$}
\end{array}\right\}.$$

\begin{theorem}\label{thm:smooth}
	For every transient graph $G$, every observable $F:\pazocal{X}\to \C$ of (at most) polynomial growth and every $X\in\pazocal{X}$, $\overline{F}^X$ is well-defined and $C^{\infty}$ on $\R\setminus[\widetilde{h},\hat{h}]$.
\end{theorem}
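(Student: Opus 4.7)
The plan is to deduce $C^\infty$-regularity of $\overline{F}^X$ on $\R\setminus[\widetilde{h},\hat{h}]$ from the series decomposition \eqref{eq:series} by showing, for every $k\geq 0$, that the termwise $k$-th derivative series $\sum_N (\overline{F}^X_N)^{(k)}(h)$ converges uniformly on compact subsets (the $k=0$ case also yielding well-definedness). Writing
\[
\overline{F}^X_N(h) = \sum_{C \in \mathcal{C}_N^X} F(C)\, P_C(h), \qquad P_C(h) := \Pr[\pazocal{C}_X(h) = C],
\]
where $\mathcal{C}_N^X$ is the family of finite connected $C \ni X$ with $N-1 \leq \cp(\overline{C}) < N$, the task reduces to controlling $\sum_{C\in \mathcal{C}_N^X} |F(C)|\cdot|P_C^{(k)}(h)|$ uniformly on compact sets.

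The central idea is to extend each $P_C$ to an entire function of complex $h$ and to apply Cauchy's formula on a carefully-chosen shrinking disk. After the change of variable $v = u - h\mathbf{1}$, one has $P_C(h) = \int_{D_0} \rho_{\overline{C}}(v + h\mathbf{1})\, dv$ over the $h$-independent region $D_0 := \{v:\, v_C \geq 0,\ v_{\partial_o C} < 0\}$, where $\rho_{\overline{C}}$ is the Gaussian density of $\varphi_{\overline{C}}$ with covariance $\Sigma_{\overline{C}} := (g(x,y))_{x,y\in \overline{C}}$. Since $\rho_{\overline{C}}$ is entire in $h$, so is $P_C$, and the identity $\mathbf{1}^T \Sigma_{\overline{C}}^{-1}\mathbf{1} = \cp(\overline{C})$ (which reflects that $\Sigma_{\overline{C}}^{-1}\mathbf{1}$ is the equilibrium measure of $\overline{C}$) gives, by a direct computation, for $z = h_0 + i h_1 \in \C$,
\[
|\rho_{\overline{C}}(v + z\mathbf{1})| = e^{h_1^2\, \cp(\overline{C})/2}\,\rho_{\overline{C}}(v + h_0 \mathbf{1}), \qquad |P_C(z)| \leq e^{h_1^2\, \cp(\overline{C})/2}\, P_C(h_0).
\]

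Applying Cauchy's formula on the disk of radius $r_N := c/\sqrt{N}$ around $h_0 \in \R$, and using $\cp(\overline{C}) < N$ on $\mathcal{C}_N^X$, this bound yields $|P_C^{(k)}(h_0)| \leq C_k\, N^{k/2} \sup_{|h' - h_0| \leq r_N} P_C(h')$. Summing over $C \in \mathcal{C}_N^X$, invoking the polynomial growth hypothesis $|F(C)| \leq M\, \cp(\overline{C})^M \leq M N^M$, and using $\sum_{C \in \mathcal{C}_N^X} P_C(h') = \Pr[A^X_N(h')]$ (disjoint events), we obtain, uniformly over any compact $K \subset \R$,
\[
\sup_{h_0 \in K} \bigl|(\overline{F}^X_N)^{(k)}(h_0)\bigr| \;\leq\; C_{k,F}\, N^{M + k/2} \sup_{h' \in K^{(r_N)}} \Pr[A^X_N(h')],
\]
with $K^{(r_N)}$ the $r_N$-neighborhood of $K$. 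To conclude, fix a compact $K \subset (-\infty, \widetilde{h})$ (the case $K \subset (\hat{h}, +\infty)$ being symmetric); choose $\epsilon>0$ with $\sup K + \epsilon < \widetilde{h}$. For all $N$ large enough that $r_N < \epsilon$, the definition of $\widetilde{h}$ provides a super-polynomially decaying $(c_N)$ with $\sup_{h' \in K^{(r_N)}} \Pr[A^X_N(h')] \leq c_N$, so $\sum_N N^{M + k/2} c_N < \infty$ for every $k$, giving the required uniform convergence.

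The main (though mild) technical point is the calibration of the contour radius: the choice $r_N \sim N^{-1/2}$ is precisely the scale that balances the Cauchy overhead $r_N^{-k} = N^{k/2}$ against the exponential factor $e^{r_N^2 \cp(\overline{C})/2}$ coming from the complex shift, turning an a priori exponential loss into a purely polynomial one that the super-polynomial decay of $\Pr[A^X_N]$ can absorb. Any attempt to use a fixed-size contour would re-impose exponential decay of $\Pr[A^X_N]$ and recover the stronger setting of Theorem~\ref{thm:main}.
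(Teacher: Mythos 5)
Your proposal is correct and follows essentially the same route as the paper: an entire extension of each term of the series \eqref{eq:series} with growth $e^{t^2\cp(\overline{C})/2}\le e^{t^2N/2}$ in the imaginary direction, a Cauchy estimate on a disk of radius of order $N^{-1/2}$ (exactly the paper's calibration), and then the polynomial growth of $F$ together with the super-polynomial decay of $\Pr[A^X_N(h')]$ on a slightly enlarged compact set. The only cosmetic difference is that you derive the bound $|P_C(z)|\le e^{h_1^2\cp(\overline{C})/2}P_C(h_0)$ from the explicit Gaussian density and the identity $\mathbf{1}^T\Sigma_{\overline{C}}^{-1}\mathbf{1}=\cp(\overline{C})$, whereas the paper obtains the same estimate in Proposition~\ref{prop:extension_bound} via the Cameron--Martin shift by $zf_{\overline{S}}$.
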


The parameters $\widetilde{h}$ and $\hat{h}$ defined above can be seen, respectively, as an alternative definition of the classical parameters $\bar{h}$ and $h_{**}$ mentioned above. Indeed, for the case $G=\Z^d$, it is not hard to prove that $\bar{h}\leq \widetilde{h}\leq h_*\leq \hat{h}\leq h_{**}$, which in turn implies $\widetilde{h}=h_*=\hat{h}$ as the equality $\bar{h}=h_{**}$ is known in this case \cite{DGRS20}. It is natural to expect that the equality $\widetilde{h}=h_*=\hat{h}$ holds in great generality, but sharpness of phase transition remains open beyond $\Z^d$. It is also natural to expect that, independently of sharpness, one might be able to bootstrap the decay of $\Pr[A^X_N(h)]$ from super-polynomial to exponential via a coarse graining argument, thus proving that $\theta$ is analytic on $\R\setminus [\widetilde{h},\hat{h}]$. This is essentially what we do in the proof of Theorem~\ref{thm:exp_Zd} for the case $G=\Z^d$: we start from a sub-optimal decay provided by the assumption $h\in\R\setminus[\bar{h},h_{**}]~(=\R\setminus\{h_*\}$ by \cite{DGRS20}) and enhance it to the desired exponential decay through a coarse graining argument -- see the discussion below for more details. On general graphs though, developing a coarse graining argument is more challenging due to a poorer understanding of their geometry.

\quad

\textbf{About the proof.} As mentioned above, our proof makes crucial use of the series \eqref{eq:series}. We first use a shift-argument based on the Cameron--Martin formula to naturally construct an analytic extension of the function $\overline{F}^X_N=\Ex[F(\pazocal{C}_X(h)) \mathbbm{1}_{A^X_N(h)}]$ to the whole complex plane $\C$ for every $N\geq1$. This construction provides a simple way to effectively estimate the growth of this entire function along the imaginary direction. More precisely, we prove in Proposition~\ref{prop:extension_bound} that $\overline{F}^X_N(h+it)\leq \exp\{\tfrac{1}{2}t^2N\}\overline{|F|}^X_N(h)$ for every $N\geq1$ and $h,t\in\R$. Due to this result, it is not difficult to deduce the locally uniform convergence (and therefore analyticity) of the series~\eqref{eq:series} from the (uniform) exponential upper bound for $\Pr[A^X_N(h)]$ on the real line -- see Proposition~\ref{prop:analytic}. This exponential bound is then provided in the case $G=\Z^d$ by Theorem~\ref{thm:exp_Zd}, which is the most technical part of this article. 

Before discussing the ideas involved in the proof of Theorem~\ref{thm:exp_Zd}, we would like to highlight some key differences between GFF level-sets and Bernoulli percolation. Kesten's proof \cite{Kes81} of analyticity for Bernoulli percolation on the subcritical phase is based on a series expansion similar to \eqref{eq:series}, but in terms of the cluster size $N=|\pazocal{C}_X|$. Since in the subcritical phase the cluster size decays exponentially in probability \cite{AizBar87,Men86} and the expansion in the imaginary direction also grows (at most) exponentially in $N$, one can prove that the series converges locally uniformly near the real line and is therefore analytic. This strategy does not work in the supercritical phase though: while the expansion in the imaginary direction is still exponential in $N$, the decay of the cluster probabilities is exponential in its \emph{boundary size} \cite{KesZha90}, which is typically of order $N^{\frac{d-1}{d}}=o(N)$. Motivated by this issue, Georgakopoulos \& Panagiotis \cite{GeoPan1820} considered a series decomposition in terms of the size of \emph{(multi-)interfaces} instead, in which case both the expansion in the imaginary direction and the decay on the real line are of the same exponential order. For the GFF level-sets though, none of these decompositions can work as the decay on the real line is subexponential in both the volume and boundary sizes. 
Nevertheless, we observe that both the imaginary expansion and the decay of cluster probability (in both subcritical and supercritical phases!) are exponential in the \emph{capacity} of the cluster, thus allowing us to make effective use of the series expansion \eqref{eq:series}. 
This fact is due to an \emph{entropic repulsion} phenomenon that emerges from the strong (non-integrable) correlations of the GFF, which in turn are deeply related to the potential theory attached to the random walk.

We will now outline the main ideas present in the proof of Theorem~\ref{thm:exp_Zd}. As mentioned above, a quite substantial multi-scale coarse graining argument takes place in the proof. We start by discussing the more natural single-scale coarse graining approach with the hope of making the need for a multi-scale argument more apparent. This single-scale approach would consist in choosing an appropriate scale $L$ and observe that on the event $A^X_N(h)$ one can find a family $\pazocal{F}$ of $L$-boxes on which an unlikely event (so called \emph{bad event}) happens. Then one can hope to prove that, for every given $\pazocal{F}$, the probability that all of these boxes are bad is at most $e^{-cN}$, while keeping the combinatorial complexity (i.e.~the number of possible families $\pazocal{F}$) of order $e^{o(N)}$. In order to prove the desired exponential upper bound, one can use the harmonic decomposition of GFF on each box of $\pazocal{F}$ into the sum of a local and a global field and then consider two cases: either most boxes of $\pazocal{F}$ are \emph{globally bad} -- which corresponds to the global (harmonic) field deviating from $0$ -- or many boxes are \emph{locally bad} -- which corresponds to the occurrence of an unlikely percolation event for the local field. By applying a large deviation result of Sznitman \cite{MR3417515}, one can prove that the probability of the first case decays exponentially in the capacity, i.e.~it is smaller than $e^{-cN}$, as desired. In the second case though, one can use independence to show that its probability is smaller than $p_L^{|\pazocal{F}|}$, where $p_L$ is the probability of a single $L$-box being locally bad. On the one hand, since the available a priori bound on $p_L$ is only stretched exponential in $L$ and the geometry of $\pazocal{F}$ is completely arbitrary, one quickly notices that in order for the desired inequality $p_L^{|\pazocal{F}|}\leq e^{-cN}$ to hold uniformly in $\pazocal{F}$, it is necessary to choose $L$ not too large. On the other hand, because of the arbitrary geometry of $\pazocal{F}$ again, it is necessary to take $L$ sufficiently large in order to have a combinatorial complexity of order $e^{o(N)}$. As a consequence, choosing such a scale $L$ becomes impossible, suggesting the need of a multi-scale approach.

Our multi-scale coarse graining construction goes roughly as follows. For each configuration $\varphi\in A_N(h)$ we construct a set of bad (and very bad) boxes $\pazocal{F}$ consisting of multiple scales. We do so inductively in the scales, starting by a sufficiently large scale $L$ such that the combinatorial complexity is of order $e^{o(N)}$. We then look at all the boxes where something unlikely happens -- these boxes are called \emph{bad} -- and we add to $\pazocal{F}$ all those boxes where something ``very unlikely'' happens -- these boxes are called \emph{very-bad}. Here ``very unlikely'' corresponds to an event for which an improved a priori upper bound of type $q_L\leq e^{-c\,\cp(B)}$ holds. If these boxes have capacity of order $N$, we are done. Otherwise, we can go down to a smaller scale $L'<L$ and inspect the bad $L'$-boxes contained in the remaining $L$-boxes (i.e.~bad but not very-bad) and add to $\pazocal{F}$ those $L'$-boxes which are very-bad. By continuing this process, we eventually obtain either a family of very-bad boxes with capacity of order $N$ or a very large number of bad boxes of the smallest scale $L_0$. We can then prove that the probability of both cases is smaller than $e^{-cN}$. Since each time we go down one scale we look only inside certain boxes of the previous scale, it turns out that we can do so by keeping the combinatorial complexity of order $e^{o(N)}$, as desired. For this construction to work though, one has to define the notions of bad and very-bad boxes in a very careful way so that a certain \emph{propagation property} holds -- see item (iii) of Definition~\ref{def:bad_events}.

\quad

\textbf{Organization of the paper.} In Section~\ref{sec:potential_analytic} we review the potential theory attached to the simple random walk and describe the shift-argument used to extend each term of the series \eqref{eq:series} to an entire function. We then prove Theorems~\ref{thm:analytic_h<0} and \ref{thm:smooth} and also deduce Theorem~\ref{thm:main} from Theorem~\ref{thm:exp_Zd}, to which the remaining sections are dedicated. In Section~\ref{sec:exp_decay_Zd}, we describe the large deviation argument used to prove Theorem~\ref{thm:exp_Zd}. In Section~\ref{sec:proof_coarse_graining} we prove the (deterministic) multi-scale coarse graining theorem stated in Section~\ref{sec:exp_decay_Zd}. Finally, in Sections~\ref{sec:decay_bad} and \ref{sec:decay_verybad} we prove the decay in probability for the notions of bad and very-bad boxes introduced in Section~\ref{sec:exp_decay_Zd}. 

\quad

\textbf{Acknowledgments.} The second author would like to thank Subhajit Goswami, Alexis Pr\'{e}vost and Pierre-Fran\c{c}ois Rodriguez for inspiring discussions on large deviations for GFF level-sets. This research was supported by the Swiss National Science Foundation and the NCCR SwissMAP.

\section{Potential theory and analytic extension}\label{sec:potential_analytic}

We start by introducing some notation. For any pair $x,y\in V$ we write $x\sim y$ if $\{x,y\}\in E$. Given $S\in\pazocal{X}$, we may consider its (inner) boundary $\partial S\coloneqq \{x\in S:~\exists y\in V\setminus S,~x\sim y\}$, its outer boundary $\partial^{out} S\coloneqq \{x\in V \setminus S:~\exists y\in S,~x\sim y\}$ and its closure $\overline{S}\coloneqq S\cup \partial^{out}S$.

We now recall some potential theory attached to simple random walk (SRW) on the graph $G=(V,E)$, which is assumed to be locally finite, connected and transient for the SRW.
We denote by $P_x$ the canonical law of the discrete-time SRW on 
$G$ starting at $x \in V$ and write $(X_n)_{n \geq 0}$ for the corresponding 
process. We let $g(\cdot,\cdot)$ stand for the Green function of the walk,
\begin{equation}\label{green}
	g(x,y) \coloneqq \dfrac{1}{d(y)}\sum_{n=0}^{\infty} P_x [X_n = y], \quad \text{ for }x,y \in V,
\end{equation}
where $d(y)$ denotes the degree of $y$. It is well known that the Green function is finite (as $G$ is transient), symmetric and positive-definite. Therefore, we can effectively define the GFF $\varphi=(\varphi_x)_{x\in V}$ as the centered Gaussian field with covariance matrix $g$. In the case of $\Z^d$, $d\geq3$, it is well known that $g(x,y)\asymp \Vert x-y \Vert^{2-d}$.

Given $K \subset V$ and $x\in V$, we consider the \emph{equilibrium measure} $e_K(x) \coloneqq  d(x)P_x [\widetilde{H}_K = \infty] \,1_{x\in K}$, where $\widetilde{H}_K\coloneqq \min\{n\geq 1 :~X_n\in K\}$. The \emph{capacity} of $K$ is defined as its total mass,
\begin{equation}\label{capacity}
	\cp(K) \coloneqq  \sum_{x \in K} e_K(x).
\end{equation}
The capacity is an increasing and sub-additive function, i.e.~$\cp(A)\leq\cp(A\cup B)\leq \cp(A)+\cp(B)$ for every $A,B\subset V$.
The following variational characterization of the capacity is useful for obtaining lower bounds:
\begin{equation}\label{eq:variational_meas}
	\cp(K)=\big(\inf_{\nu} E(\nu)\big)^{-1}, 
\end{equation}
where $E(\nu)\coloneqq \sum_{x,y} \nu(x)\nu(y)g(x,y)$ and the infimum ranges over all probability measures $\nu$ supported on $K$. As a direct consequence, one has the following inequality
\begin{equation}\label{eq:cap_bound}
	\frac{|K|}{\sup_{x\in K} \sum_{y\in K} g(x,y)}\leq \cp(K)\leq \frac{|K|}{\inf_{x\in K} \sum_{y\in K} g(x,y)}.
\end{equation}
In the special case of the box $B_L=[0,L)^d$ on $\Z^d$, one can conclude that
\begin{equation}\label{ball cap}
	\cp(B_L) \asymp L^{d-2} \quad \text{for all } L\geq 1.
\end{equation}
The optimizing measure in \eqref{eq:variational_meas} is precisely the normalized equilibrium measure $\overline{e}_K(x)\coloneqq e_K(x)/\cp(K)$. 
Further, for every $K \subset K' \subset \subset \Z^d$ one has the sweeping identity
\begin{equation}\label{sweeping}
	\cp(K) = \sum_{x\in K'} e_{K'}(x) \Pr_x[H_K < \infty],
\end{equation}
where $H_K\coloneqq \min\{n\geq 0 :~X_n\in K\}$. 
Consider the Dirichlet inner product defined as
\begin{equation}\label{eq:Dirichlet}
	\pazocal{E}(f,g) \coloneqq -\sum_{x \in V} \Delta f(x) g(x)
\end{equation}
for every pair of function $f,g: V \rightarrow \R$ for which the sum converges (for instance, if either $\Delta f$ or $g$ has finite support), where $\Delta f(x)\coloneqq \sum_{y\sim x}(f(y)-f(x))$ is the Laplacian of $f$. 
One also has the following variational characterization of capacity in terms of the Dirichlet energy
\begin{equation}\label{eq:variational_func}
	\cp(K)=\inf_{f} \pazocal{E}(f,f),
\end{equation}
where the infimum ranges over all functions $f$ such that $\pazocal{E}(f,f)$ is well defined and $f(x)\geq1$ for every $x\in K$.
The optimizing function in \eqref{eq:variational_func} is called the \emph{harmonic potential} of $K$ and is given by 
$$f_K(x)\coloneqq P_x[H_K<\infty].$$ 
In fact, $f_K$ takes value $1$ on $K$ and is harmonic on $V\setminus K$,~i.e.~$\Delta f_K(x)=0$ for all $x\in V\setminus K$. 

Given a function $f:V \rightarrow \C$ for which the Laplacian $\Delta f(x)$ has finite support, we introduce the complex measure
\begin{equation}\label{def tilde P}
	\widetilde{\Pr}_f(d\varphi)\coloneqq\exp\left\{-\tfrac{1}{2}\pazocal{E}(f,f)-\pazocal{E}(f,\varphi)\right\}\Pr(d\varphi).
\end{equation}
Notice that when $f$ takes real values, the Cameron--Martin formula implies that $\widetilde{\Pr}_{f}$ is a probability measure and furthermore, the law of $\varphi$ under $\widetilde{\Pr}_{f}$ coincides with the law of $\big(\varphi_x-f(x)\big)$ under $\Pr$. This observation will allow us to extend the probability of \textit{local} events to the complex plane.

\begin{proposition}\label{prop:extension_bound}
	For every $X\in \pazocal{X}$ and $N\geq1$, the function $\overline{F}^X_N(h)=\Ex[F(\pazocal{C}_X(h)) \mathbbm{1}_{A^X_N(h)}]$ extends to an entire function such that for every $h,t\in \R$,
	\begin{equation}\label{eq:F_N_bound}
		|\overline{F}^X_N(h+it)|\leq \exp\left\{\tfrac{1}{2}t^2N\right\}\overline{|F|}^X_N(h),
	\end{equation}
	where $\overline{|F|}^X_N(h)=\Ex[|F(\pazocal{C}_X(h))| \mathbbm{1}_{A^X_N(h)}]$.
\end{proposition}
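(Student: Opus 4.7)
The plan is to analytically continue each term via a Cameron--Martin shift after decomposing $\overline{F}^X_N(h)$ over admissible cluster shapes. First, I would write
\begin{equation*}
\overline{F}^X_N(h) = \sum_{S} F(S)\,\mathbbm{1}_{\{N-1 \leq \cp(\overline{S}) < N\}}\,\Pr[\pazocal{C}_X(h) = S],
\end{equation*}
where the sum ranges over finite subsets $S \supset X$ admissible as values of the cluster (each connected component of $S$ meets $X$). The crucial observation is that $\{\pazocal{C}_X(h) = S\}$ is a \emph{local} event: it depends only on $(\varphi_x)_{x \in \overline{S}}$ via the conditions $\varphi_x \geq h$ on $S$, $\varphi_x < h$ on $\partial^{out} S$, together with a deterministic connectivity constraint on $S$.

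For each fixed $S$, I would apply the Cameron--Martin formula \eqref{def tilde P} with the shift $f = h \cdot f_{\overline{S}}$. Since $f_{\overline{S}} \equiv 1$ on $\overline{S}$, the shift lowers the threshold on $\overline{S}$ from $h$ to $0$, and the barrier $\varphi < h$ on $\partial^{out} S$ prevents the cluster of $X$ in $\{\varphi - f \geq 0\}$ from leaking out of $\overline{S}$; hence $\mathbbm{1}_{\{\pazocal{C}_X(h) = S\}}(\varphi) = \mathbbm{1}_{\{\pazocal{C}_X(0) = S\}}(\varphi - f)$. Combined with $\pazocal{E}(f,f) = h^2 \cp(\overline{S})$ and $\pazocal{E}(f, \varphi) = h \langle e_{\overline{S}}, \varphi\rangle$ (via $-\Delta f_{\overline{S}} = e_{\overline{S}}$ on $\overline{S}$ and vanishing elsewhere), the Cameron--Martin identity yields
\begin{equation*}
\Pr[\pazocal{C}_X(h) = S] = e^{-\tfrac{1}{2} h^2 \cp(\overline{S})}\,\Ex\!\left[\mathbbm{1}_{\{\pazocal{C}_X(0) = S\}}\, e^{-h\langle e_{\overline{S}}, \varphi\rangle}\right].
\end{equation*}
The right-hand side is visibly entire in $h$ (a finite-dimensional Gaussian integral against a bounded indicator, depending on $h$ only through the two explicit exponential factors), so substituting $z = h + it$ produces an analytic continuation of each summand.

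To establish the bound and locally uniform convergence of the resulting series, I take moduli: $|e^{-z^2 \cp(\overline{S})/2}| = e^{-(h^2 - t^2)\cp(\overline{S})/2}$ and $|e^{-z \langle e_{\overline{S}}, \varphi\rangle}| = e^{-h \langle e_{\overline{S}}, \varphi\rangle}$, so the factors depending on $z$ recombine into $\Pr[\pazocal{C}_X(h) = S]$ times a multiplicative gain $e^{t^2 \cp(\overline{S})/2} \leq e^{t^2 N/2}$, where the last inequality uses the defining capacity constraint of $A^X_N(h)$. Summing against $|F(S)|$ yields exactly the claimed bound $|\overline{F}^X_N(h+it)| \leq e^{t^2 N/2}\,\overline{|F|}^X_N(h)$. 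The same estimate, applied with $t$ in a bounded set, gives locally uniform convergence of the series on compact subsets of $\C$ (using $\sum_S \Pr[\pazocal{C}_X(h) = S] \leq 1$ and the fact that the subexponential growth of $F$ bounds $|F(S)|$ uniformly over $S$ with $\cp(\overline{S}) \in [N-1, N)$), so the extension is entire. The step I expect to require the most care is the event-identification $\{\pazocal{C}_X(0) = S\}(\varphi - f) = \{\pazocal{C}_X(h) = S\}(\varphi)$: even though $f = h\,f_{\overline{S}}$ is not supported on $\overline{S}$, it equals $h$ throughout $\overline{S}$, and it is precisely the barrier condition $\varphi < h$ on $\partial^{out} S$ that confines the shifted cluster back inside $\overline{S}$.
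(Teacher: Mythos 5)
Your proposal is correct and follows essentially the same route as the paper: decompose over cluster shapes $S$, apply the Cameron--Martin shift $zf_{\overline{S}}$ (using $\pazocal{E}(f_{\overline{S}},f_{\overline{S}})=\cp(\overline{S})$ and that $f_{\overline{S}}\equiv 1$ on $\overline{S}$) to extend each term $\Pr[\pazocal{C}_X(h)=S]$ to an entire function, bound its modulus at $h+it$ by $e^{t^2\cp(\overline{S})/2}\Pr[\pazocal{C}_X(h)=S]$, and conclude by uniform convergence of the sum over $S\in\pazocal{A}_N$. The only cosmetic differences are that the paper justifies entirety of each term via an explicit power-series expansion with a Gaussian exponential-moment domination (where you assert it as a standard fact about finite-dimensional Gaussian integrals) and obtains the modulus bound by re-applying Cameron--Martin with the purely imaginary shift $itf_{\overline{S}}$ rather than by taking moduli in the explicit formula; both are equivalent.
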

\begin{proof}
	Let $S\in\pazocal{X}$. We start by extending $h\mapsto\Pr[\pazocal{C}_X(h)=S]$ to the complex plane. For every $z\in\C$, we define 
	\begin{align}\label{eq:theta_S}
		\begin{split}
			\theta^X_S(z)\coloneqq \widetilde{\Pr}_{zf_{\overline{S}}}[\pazocal{C}_X(0)=S]&= \Ex[\exp\left\{-\tfrac{1}{2}z^2\pazocal{E}(f_{\overline{S}},f_{\overline{S}})-z\pazocal{E}(f_{\overline{S}},\varphi)\right\}\mathbbm{1}_{\pazocal{C}_X(0)=S}]\\
			&=\exp\{-\tfrac{1}{2}z^2\cp(\overline{S})\}\sum_{k=0}^{\infty} \dfrac{\Ex[\left(-\pazocal{E}(f_{\overline{S}},\varphi)\right)^k \mathbbm{1}_{\pazocal{C}_X(0)=S}]}{k!}z^k.	
		\end{split}
	\end{align}
	First notice that since the event $\{\pazocal{C}_X(0)=S\}$ only depends on $\varphi$ restricted to $\overline{S}$ and $hf_{\overline{S}}=h$ on $\overline{S}$, it follows from the Cameron--Martin formula that $\theta^X_{S}(h)$ is indeed equal to $\Pr[\pazocal{C}_X(h)=S]$ for $h\in\R$. 
	In order to prove that $\theta^X_S(z)$ is analytic on $\C$ it suffices to show that the series in \eqref{eq:theta_S} converges locally uniformly. Indeed, this follows directly from the fact that for all $n\geq0$,
	$$\left\lvert\sum_{k=0}^{n} \dfrac{\left(-\pazocal{E}(f_{\overline{S}},\varphi)\right)^k}{k!}z^k \right\rvert \leq \sum_{k=0}^\infty \dfrac{\left\lvert\pazocal{E}(f_{\overline{S}},\varphi)\right\rvert ^k}{k!} |z|^k=\exp\left\{\lvert z\pazocal{E}(f_{\overline{S}},\varphi)\rvert\right\},$$
	and $\Ex[\exp\left\{\lvert z\pazocal{E}(f_{\overline{S}},\varphi)\rvert \right\}]$ is finite for every $z\in\C$ as $\pazocal{E}(f_{\overline{S}},\varphi)$ is a Gaussian random variable. 
	
	We will now obtain a bound for $\theta^X_S(h+it)$ in terms of $\theta^X_S(h)$ for $h,t\in \R$. By the Cameron--Martin formula, we have
	$$\theta^X_S(h+it)=\Pr_{itf_{\overline{S}}}[\pazocal{C}_X(h)=S]= \Ex[\exp\left\{\tfrac{1}{2}t^2\cp(\overline{S})-it\pazocal{E}(f_{\overline{S}},\varphi)\right\}\mathbbm{1}_{\pazocal{C}_X(h)=S}].$$
	Since $\lvert\exp\left\{-iy\pazocal{E}(f_{\overline{S}},\varphi)\right\}\rvert=1$ a.s., we obtain
	\begin{equation}\label{eq:theta_S_bound}
		|\theta^X_S(h+it)|\leq \exp\{\tfrac{1}{2}t^2\cp(\overline{S})\}\Pr[\pazocal{C}_X(h)=S].
	\end{equation}
	Finally, for every $z\in \C$, we define
	\begin{equation} \label{eq:F_bar_def}
		\overline{F}^X_N(z)\coloneqq \sum_{S\in \pazocal{A}_N} F(S)\theta^X_S(z).
	\end{equation}
	Here $\pazocal{A}_N$ denotes the family of all sets $S\in\pazocal{X}$ such that $N-1\leq \cp(\overline{S})<N$. By \eqref{eq:theta_S_bound}, 
	$$\sum_{S\in A_N}\lvert F(S)\theta^X_S(z)\rvert\leq \exp\{\tfrac{1}{2}\lvert \text{Im}(z)\rvert^2 N\}\sup_{S\in A_N} |F(S)| <\infty.$$
	We can then apply the Weierstrass M-test to conclude that the series in \eqref{eq:F_bar_def} converges locally uniformly and therefore $\overline{F}^X_N$ is indeed analytic on $\C$. The inequality \eqref{eq:F_N_bound} follows readily from \eqref{eq:theta_S_bound}.
\end{proof}

With Proposition~\ref{prop:extension_bound} in hands, we can now easily obtain a sufficient condition for the analyticity of $\overline{F}^X$.

\begin{proposition}\label{prop:analytic}
	Let $X\in\pazocal{X}$. If there exists a constant $t>0$ such that $\Pr[A^X_N(h)]\leq e^{-tN}$ for every $N\geq 1$ and $h\in(a,b)$, then $\overline{F}^X$ is analytic on $(a,b)$ for every observable $F$ of subexponential growth.
\end{proposition}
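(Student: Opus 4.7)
The plan is to realise $\overline{F}^X$ as the series \eqref{eq:series} and invoke Weierstrass's theorem: since each summand $\overline{F}^X_N$ is entire by Proposition~\ref{prop:extension_bound}, it suffices to show that $\sum_N \overline{F}^X_N(z)$ converges locally uniformly on some open subset of $\C$ containing $(a,b)$. The hypothesis gives a real-line exponential decay $e^{-tN}$, while Proposition~\ref{prop:extension_bound} supplies the imaginary-direction growth $e^{t^2 N/2}$; the entire argument reduces to balancing these two exponentials against the subexponential correction coming from $F$.

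Fix $h_0\in (a,b)$ and $\delta>0$ such that $[h_0-\delta,h_0+\delta]\subset(a,b)$. For a parameter $s_0>0$ to be chosen, consider the closed rectangle $U=[h_0-\delta,h_0+\delta]+i[-s_0,s_0]$. For $z=h+is\in U$, Proposition~\ref{prop:extension_bound} gives
\begin{equation*}
|\overline{F}^X_N(z)|\leq \exp\{\tfrac{1}{2}s_0^2 N\}\,\overline{|F|}^X_N(h).
\end{equation*}
Since every $S\in\pazocal{A}_N$ satisfies $\cp(\overline{S})<N$ and $F$ has subexponential growth, for every $\varepsilon>0$ there is a constant $C_\varepsilon<\infty$ such that $|F(S)|\leq C_\varepsilon e^{\varepsilon N}$ for all $S\in\pazocal{A}_N$. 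Combining this with the assumed bound on $\Pr[A^X_N(h)]$ yields
\begin{equation*}
\overline{|F|}^X_N(h)\leq C_\varepsilon\, e^{\varepsilon N}\,\Pr[A^X_N(h)]\leq C_\varepsilon\, e^{(\varepsilon-t)N}
\end{equation*}
uniformly in $h\in(a,b)$. Substituting back we obtain, uniformly for $z\in U$,
\begin{equation*}
|\overline{F}^X_N(z)|\leq C_\varepsilon\exp\big\{(\tfrac{1}{2}s_0^2+\varepsilon-t)N\big\}.
\end{equation*}

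Choosing $\varepsilon=t/4$ and $s_0=\sqrt{t/2}$ forces the exponent to equal $-t/4<0$, so the series $\sum_{N\geq 1}\overline{F}^X_N$ is dominated by a convergent geometric series uniformly on $U$. By Weierstrass's theorem, its sum is holomorphic on the interior of $U$, which contains the open interval $(h_0-\delta,h_0+\delta)$. On the real line this sum coincides with $\overline{F}^X$ by \eqref{eq:series}, so $\overline{F}^X$ extends holomorphically on a complex neighbourhood of $h_0$; as $h_0\in(a,b)$ was arbitrary, $\overline{F}^X$ is analytic on $(a,b)$. There is no real obstacle in this argument: the only point requiring care is the bookkeeping of exponential factors, namely verifying that the Gaussian blow-up $e^{s_0^2 N/2}$ along the imaginary direction and the subexponential prefactor $e^{\varepsilon N}$ can simultaneously be made strictly smaller than the given exponential decay rate $e^{-tN}$ by shrinking $s_0$ and $\varepsilon$.
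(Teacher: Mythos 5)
Your proof is correct and follows essentially the same route as the paper's: combine the imaginary-direction growth bound from Proposition~\ref{prop:extension_bound} with the assumed exponential decay on the real line and the subexponential growth of $F$, then conclude by uniform convergence of the series \eqref{eq:series} on a complex strip around the interval. (The only blemish is a harmless arithmetic slip: with $\varepsilon=t/4$ and $s_0=\sqrt{t/2}$ the exponent is $\tfrac{t}{4}+\tfrac{t}{4}-t=-\tfrac{t}{2}$ rather than $-t/4$, which of course only helps.)
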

\begin{proof}
	By Proposition \ref{prop:extension_bound} and our assumption on the decay of $\Pr[A^X_N(h)]$, we obtain that $\lvert \overline{F}^X_N(z)\rvert\leq \exp\{-\tfrac{1}{2}tN\}\sup_{S\in \pazocal{A}_N} \lvert F(S)\rvert$ for every $z\in (a,b)\times (-\sqrt{t},\sqrt{t})$.  
	By the subexponential growth of $F$ it follows that the series $\overline{F}^X(z)=\sum_{N=1}^\infty \overline{F}^X_N(z)$ converges uniformly on $(a,b)\times (-\sqrt{t},\sqrt{t})$, hence it is analytic on that set.
\end{proof}

Notice that Theorem~\ref{thm:main} follows directly from Proposition~\ref{prop:analytic} and Theorem~\ref{thm:exp_Zd}, whose proof is presented in the following sections. Theorem~\ref{thm:analytic_h<0} follows from Proposition~\ref{prop:analytic} and the following simple result. Recall that $\{\varphi\geq h\}$ is known to percolate for every $h<0$ on any transient graph \cite{BricmontLJM87}.

\begin{proposition}\label{prop:exp_h<0}
	For every transient graph $G$, $X\in\pazocal{X}$, $h<0$ and $N\geq 1$, we have $$\Pr[A^X_N(h)]\leq \exp\{-\tfrac{1}{2}h^2(N-1)\}.$$
\end{proposition}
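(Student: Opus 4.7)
The plan is to apply a Cameron--Martin shift of the form $f=-hf_{\overline{S}}$ for each candidate realization $S$ of the cluster, in complete analogy with the shift used in the proof of Proposition~\ref{prop:extension_bound}, but now tailored to real $h<0$. First I would decompose
\[
	\Pr[A^X_N(h)]=\sum_{S\in\pazocal{A}_N}\Pr[\pazocal{C}_X(h)=S],
\]
where $\pazocal{A}_N$ is the family of finite sets $S$ with $N-1\leq\cp(\overline{S})<N$. Since the event $\{\pazocal{C}_X(h)=S\}$ depends only on $\varphi\restriction_{\overline{S}}$, and since $f=-h f_{\overline{S}}$ equals $-h$ on $\overline{S}$, we have $\{\pazocal{C}_X(h)(\varphi)=S\}=\{\pazocal{C}_X(0)(\varphi+f)=S\}$. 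Using that the law of $\varphi+f$ under $\Pr$ coincides with $\widetilde{\Pr}_{-f}$, together with $\pazocal{E}(f,f)=h^2\cp(\overline{S})$, the Cameron--Martin formula yields
\[
	\Pr[\pazocal{C}_X(h)=S]=e^{-\frac{1}{2}h^2\cp(\overline{S})}\,\Ex\bigl[\mathbbm{1}_{\pazocal{C}_X(0)=S}\exp\{-h\,\pazocal{E}(f_{\overline{S}},\varphi)\}\bigr].
\]

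The key step is to show that on the event $\{\pazocal{C}_X(0)=S\}$ the extra exponential factor is at most $1$. Because $f_{\overline{S}}\equiv 1$ on $\overline{S}$ and $f_{\overline{S}}$ is harmonic off $\overline{S}$, its Laplacian is supported on $\partial^{out}S$; moreover $\Delta f_{\overline{S}}(x)=\sum_{y\sim x}(f_{\overline{S}}(y)-1)\leq 0$ for $x\in\partial^{out}S$, as $f_{\overline{S}}\leq 1$ everywhere. On the event $\{\pazocal{C}_X(0)=S\}$ we have $\varphi(x)<0$ for every $x\in\partial^{out}S$ (otherwise the cluster would extend past $S$). Consequently,
\[
	\pazocal{E}(f_{\overline{S}},\varphi)=-\!\!\sum_{x\in\partial^{out}S}\!\!\Delta f_{\overline{S}}(x)\,\varphi(x)\leq 0,
\]
and since $-h>0$ this gives $\exp\{-h\,\pazocal{E}(f_{\overline{S}},\varphi)\}\leq 1$ on the event in question.

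Combining the two displays and using $\cp(\overline{S})\geq N-1$ for $S\in\pazocal{A}_N$ yields $\Pr[\pazocal{C}_X(h)=S]\leq e^{-\frac{1}{2}h^2(N-1)}\,\Pr[\pazocal{C}_X(0)=S]$. Summing over $S\in\pazocal{A}_N$ and using that the events $\{\pazocal{C}_X(0)=S\}$ are pairwise disjoint (so their total probability is at most $1$) gives the claimed bound. The only delicate point in the argument is the sign check in the second paragraph; all that is really needed is the elementary fact that $\Delta f_{\overline{S}}\leq 0$ on $\partial^{out}S$ paired with the boundary condition $\varphi<0$ there, which is really a manifestation of entropic repulsion: the shift that turns the level $h$ into the level $0$ is aligned with, rather than against, the boundary information forced by $\{\pazocal{C}_X(0)=S\}$.
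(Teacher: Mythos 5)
Your argument is correct and is essentially identical to the paper's proof: the same Cameron--Martin shift by $hf_{\overline{S}}$ (you phrase it as shifting $\varphi$ by $-hf_{\overline{S}}$, which is the same computation), the same sign check that $\Delta f_{\overline{S}}\leq 0$ on the outer boundary while $\varphi<0$ there forces $\exp\{-h\,\pazocal{E}(f_{\overline{S}},\varphi)\}\leq 1$, and the same use of $\pazocal{E}(f_{\overline{S}},f_{\overline{S}})=\cp(\overline{S})\geq N-1$ before summing over $S$. No changes needed.
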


\begin{proof}
	Let $h<0$ and $S\in\pazocal{A}_N$. Recall that by the Cameron--Martin formula, $$\Pr[\pazocal{C}_X(h)=S]=\widetilde{\Pr}_{hf_{\overline{S}}}[\pazocal{C}_X(0)=S]=\exp\left\{-\tfrac{1}{2}h^2\pazocal{E}(f_{\overline{S}},f_{\overline{S}})\right\}\Ex[\exp\left\{-h\pazocal{E}(f_{\overline{S}},\varphi)\right\}1_{\pazocal{C}_X(0)=S}].$$ Notice that on the event $\{\pazocal{C}_X(0)=S\}$, we have $\varphi_x\leq 0$ for every $x\in \partial^{out} S\supset \partial \overline{S}$. Moreover, $\Delta f_{\overline{S}}(x)=0$ for every $x\in V\setminus \partial \overline{S}$ and $\Delta f_{\overline{S}}(x)\leq 0$ for every $x\in \partial \overline{S}$. It follows that $\exp\{-h\pazocal{E}(f_{\overline{S}},\varphi)\}\leq 1$ on the event $\{\pazocal{C}_X(0)=S\}$. Furthermore, $\pazocal{E}(f_{\overline{S}},f_{\overline{S}})=\cp(\overline{S})\geq N-1$. Overall we obtain $$\Pr[\pazocal{C}_X(h)=S]\leq \exp\left\{-\tfrac{1}{2}h^2 (N-1)\right\}\Pr[\pazocal{C}_X(0)=S]$$
	and the desired inequality follows by summing over $S$.
\end{proof}

We finish this section by proving Theorem~\ref{thm:smooth}.

\begin{proof}[Proof of \Tr{thm:smooth}]
	Let us write $D(h,R)$ for the closed disk in the complex plane that is centred at $h$ and has radius $R$. Consider some $h\in\R\setminus[\tilde{h},\hat{h}]$ and let $R=N^{-1/2}$. By Proposition~\ref{prop:extension_bound} and the Cauchy estimate, we can bound the $k$th derivative of $\overline{F}^X_N$ as follows
	$$\lvert\partial^k\overline{F}^X_N(h)\rvert\leq\dfrac{k!M_R}{R^k},$$
	where $M_R=\sup_{z\in D(h,R)} |\overline{F}^X_N(z)|$. The inequality \eqref{eq:F_N_bound} implies that 
	$$M_R\leq e^{1/2} \sup_{S\in\pazocal{A}_N} \lvert F(S)\rvert\sup_{h'\in [h-R,h+R]} \Pr[A^X_N(h')].$$
	Thus, by the subpolynomial growth of $F$ and the superpolynomial decay of $\Pr[A^X_N(h')]$, it follows that $\lvert\partial^k \overline{F}^X_N(h)\rvert$ decays to $0$ super-polynomially fast and uniformly on compact subsets of $\R\setminus[\tilde{h},\hat{h}]$. We can now conclude that the sum $\sum_{N=1}^{\infty} \lvert\partial^k \overline{F}^X_N(h)\rvert$ converges uniformly on compact subsets of $\R\setminus[\tilde{h},\hat{h}]$, hence the $k$th derivative of $\overline{F}^X(h)$ exists and is equal to $\sum_{N=1}^{\infty} \partial^k \overline{F}^X_N(h)$.
\end{proof}
	
\section{Exponential decay of capacity on $\Z^d$}\label{sec:exp_decay_Zd}

In this section, we will introduce some definitions and state the technical results needed for the proof of \Tr{thm:exp_Zd}. Since $\Z^d$ is transitive and the capacity is sub-additive, by a union bound we can assume without loss of generality that $X=\{o\}$ and henceforth omit $X$ from the notation.

\subsection{Markov decomposition and harmonic deviations}\label{subsec:decomp_deviation}

We start by introducing some notation. Given $L\geq 1$, let $B_L=[0,L)^d$, $U_L=[-L,2L)^d$, $D_L=[-3L,4L)^d$ and $K_L=[-100L,101L)^d$. We will write $B_L(z)=z+B_L$, $U_L(z)=z+U_L$, $D_L(z)=z+D_L$ and $K_L(z)=z+K_L$ for their translates with respect to a vertex $z\in L\Z^d$. We will view $L\Z^d$ both as a graph that is naturally isomorphic to $\Z^d$ and as the collection of all the boxes $B_L(z)$.
Given a box $B=B_L(z)$, we consider the Gaussian fields
\begin{equation}\label{eq:decomp}
	\xi^B_x\coloneqq  E_x\big[\varphi_{X_{T_{K}}}\big] =\sum_{y} P_x[X_{T_K}=y]\varphi_y, \quad  \psi^B_x \coloneqq \varphi_x- \xi_x^B, \quad  \text{ for }x \in \mathbb{Z}^d,
\end{equation}
where $K=K_L(z)$ and $T_K\coloneqq \min\{n\geq0:~X_n\notin K\}$. 
One then has the decomposition $$\varphi_x=\psi^B_x+\xi^B_x, \quad \forall x\in \Z^d.$$ 
It is clear that $\xi^B_x=\varphi_x$ (and therefore $\psi^B_x=0$) for every $x\in \Z^d\setminus K$. The Markov property implies that $\psi^B$ is independent of $\sigma(\varphi_x, x\in \Z^d\setminus K)$, hence it is independent from $\xi^B$.
Moreover, $\xi^B$ is harmonic in $K$ and the covariance matrix of $\psi^B$ is equal to the Green function $g_K$ for simple random walk killed on the boundary of $K$. The fields $\xi^B$ and $\psi^B$ are often called harmonic and local fields, respectively.
The aforementioned decomposition of $\varphi$ is of great importance for large deviation results as it will allow us to distinguish local contributions (driven by $\psi$) from global ones (driven by $\xi$). In this subsection we focus on estimating the global contributions, which correspond to deviations of $\xi$ and are governed by the capacity.

Let $\varepsilon>0$ and $L\geq 1$. We say that the box $B=B_L(z)\in L\Z^d$ is $(\xi,\varepsilon)$-\emph{good} if 
$$\lvert\xi^B_x\rvert <\varepsilon \quad \text{for every } x\in D,$$
where $D=D_L(z)$.
If $B$ is not $(\xi,\varepsilon)$-good, we will call it $(\xi,\varepsilon)$-\emph{bad}. Sznitman \cite{MR3417515} obtained a precise estimate for the probability that many boxes of the same scale are $(\xi,\varepsilon)$-bad. For our purposes, a multi-scale version of Sznitamn's result is necessary. To formally state this new version, we will need the following definition. Consider a family $\pazocal{F}$ of boxes of $L_1\Z^d,L_2\Z^d,\ldots,L_r\Z^d$, where $L_1<L_2<\ldots<L_r$ are integers. We say that $\pazocal{F}$ is \emph{well-separated} if for every pair of boxes $B_{L_i}(z), B_{L_j}(w)\in \pazocal{F}$, the boxes $K_{L_i}(z)$ and $K_{L_j}(w)$ are disjoint. We remark that for a well-separated family $\pazocal{F}$, the local fields $\psi^{B}$, $B\in\pazocal{F}$, are independent from each other, which will be useful in the following sections in estimating the probability of certain events. Finally, we define $$\Sigma=\Sigma(\pazocal{F})\coloneqq \bigcup_{B\in \pazocal{F}} B.$$ 
The following is a slight modification of Sznitman's result.

\begin{lemma}\label{lem:multi_Szn}
	There is a constant $c_0>0$ such that the following holds. For every $\varepsilon>0$ there is a constant $\delta>0$ such that for every well-separated collection $\pazocal{F}$ with $|\pazocal{F}|\leq \delta \cp(\Sigma)$, we have
	$$\Pr(\text{$B$ is $(\xi,\varepsilon)$-bad} \; \forall B\in \pazocal{F})\leq \exp\left(-c_0\varepsilon^2 \cp(\Sigma)\right).$$
\end{lemma}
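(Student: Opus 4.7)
The statement is a multi-scale extension of Sznitman's single-scale large deviation estimate in~\cite{MR3417515}, so I would follow his blueprint at each scale and then glue the scales together. The strategy has three stages: (i) discretize the event ``$B$ is $(\xi,\varepsilon)$-bad'' by committing to a single witness point inside each $D_B$, (ii) apply the Cameron--Martin formula~\eqref{def tilde P} with a shift adapted to the chosen witnesses, and (iii) use well-separation of the $K_B$'s to show that the Dirichlet energy of the shift is proportional to $\sum_B\cp(B_{L_B})$, hence to $\cp(\Sigma)$ after sub-additivity. The entropy cost of the discretization is then absorbed by choosing $\delta$ small.

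\textbf{Key steps.} For Step~(i), for each $(\xi,\varepsilon)$-bad $B\in\pazocal{F}$ pick a witness $(x_B,\sigma_B)\in D_B\times\{-1,+1\}$ with $\sigma_B\,\xi^B_{x_B}\ge\varepsilon$. A union bound over witness assignments introduces a combinatorial prefactor at most $\prod_B 2|D_B|\le\exp(C|\pazocal{F}|+Cd\sum_B\log L_B)$. For Step~(ii), set $f\coloneqq\varepsilon\sum_B\sigma_B f_{D_B}$, with $f_{D_B}$ the harmonic potential introduced after~\eqref{eq:variational_func}. Since $f_{D_B}\equiv 1$ on $D_B$, under $\widetilde{\Pr}_f$ each $\xi^B_{x_B}$ is centred at $\varepsilon\sigma_B$ up to small contributions from the other $D_{B'}$'s (negligible by well-separation), so the event $\{\sigma_B\xi^B_{x_B}\ge\varepsilon\ \forall B\}$ has $\widetilde{\Pr}_f$-probability bounded below by a constant. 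The Cameron--Martin change of measure then gives
\[
\Pr\!\bigl(\sigma_B\xi^B_{x_B}\ge\varepsilon\ \forall B\bigr)\ \le\ C\exp\!\Bigl(-\tfrac{1}{2}\pazocal{E}(f,f)\Bigr).
\]
For Step~(iii), the diagonal contribution is $\sum_B\varepsilon^2\pazocal{E}(f_{D_B},f_{D_B})=\varepsilon^2\sum_B\cp(D_B)\asymp\varepsilon^2\sum_B\cp(B_{L_B})$ by~\eqref{ball cap}. The cross pairings $\pazocal{E}(f_{D_B},f_{D_{B'}})=\sum_x e_{D_B}(x)f_{D_{B'}}(x)$ are controlled by the decay $f_{D_{B'}}(x)\lesssim\cp(D_{B'})|x-z_{B'}|^{2-d}$ (Green function bound on $\Z^d$), and well-separation makes the sum of these cross terms small compared to the diagonal. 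By sub-additivity, $\sum_B\cp(B_{L_B})\ge\cp(\Sigma)$, which yields $\pazocal{E}(f,f)\ge c\varepsilon^2\cp(\Sigma)$. Finally, the prefactor from Step~(i) is absorbed: using $|\pazocal{F}|\le\delta\cp(\Sigma)$ and $\log L\ll L^{d-2}\asymp\cp(B_L)$ for $d\ge 3$, the log of the combinatorial factor is $o(\varepsilon^2\cp(\Sigma))$ once $\delta=\delta(\varepsilon)$ is taken small enough.

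\textbf{Main obstacle.} The most delicate point is the cross-term estimate in Step~(iii) uniformly across the scale mixture. In Sznitman's single-scale argument all boxes have a common $L$, the pairwise Green decay is completely transparent, and the counting over pairs is an easy geometric sum. Here one must check that well-separation is strong enough to suppress interactions between a small box $B$ and a much larger box $B'$ whose halo $D_{B'}$ could spread across several small-scale neighbourhoods, and that the total cross contribution $\sum_{B\ne B'}\cp(D_B)\cp(D_{B'})/\mathrm{dist}(B,B')^{d-2}$ is dominated by a constant multiple of the diagonal $\sum_B\cp(D_B)$. Doing this uniformly over all scale patterns consistent with $|\pazocal{F}|\le\delta\cp(\Sigma)$ is essentially the content of the word ``modification'' in the lemma statement; most likely one strengthens the well-separation condition implicitly (a separation buffer proportional to the larger scale suffices) and then performs a dyadic counting of box pairs by distance to close the estimate.
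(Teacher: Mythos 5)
Your overall skeleton (reduce to a one--sided event with a sign/witness per box, tilt or apply a Chernoff bound to a linear functional of the field, and absorb the entropy by taking $\delta$ small) is the right family of ideas, but two of your three steps contain genuine gaps, and they are exactly the points where the paper has to lean on Sznitman's machinery. First, the change--of--measure logic in Step~(ii) runs in the wrong direction: showing that the event has $\widetilde{\Pr}_f$--probability bounded below yields a \emph{lower} bound on $\Pr$ (via Jensen applied to $\log\frac{d\Pr}{d\widetilde{\Pr}_f}$), not an upper bound. The correct tool is the exponential Chebyshev inequality for a single linear functional $\sum_B \lambda_B\sigma_B\xi^B_{x_B}$, whose exponent is $\varepsilon^2(\sum_B\lambda_B)^2/2\,\mathrm{Var}$. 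This forces you to confront the covariance structure, and here your cross--term claim in Step~(iii) fails: with equal weights (equivalently, with the tilt $f=\varepsilon\sum_B\sigma_Bf_{D_B}$), the off--diagonal sum $\sum_{B\neq B'}\cp(D_B)\cp(D_{B'})g(z_B,z_{B'})$ is \emph{not} dominated by the diagonal $\sum_B\cp(D_B)$ for well--separated families satisfying $|\pazocal{F}|\leq\delta\cp(\Sigma)$. For instance, in $d=3$ take $n$ boxes of scale $L$ spread in a region of side $R$ with $Ln^{1/3}\ll R\ll Ln$; the ratio of cross terms to diagonal is of order $Ln/R$, which the constraint only caps at $\delta L$, hence it is unbounded as $L\to\infty$. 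No strengthening of the separation buffer fixes this. The resolution, which is the content of Sznitman's Theorem~4.2 imported by the paper, is to weight each box by the equilibrium measure of the \emph{union} $\Sigma$ restricted to that box, i.e.\ $\lambda(B)=e_\Sigma(B)/\cp(\Sigma)$: then the relevant energy is that of $\overline{e}_\Sigma$, which equals $1/\cp(\Sigma)$ exactly, and the cross--term problem disappears by construction.

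Second, once the weights are the correct ones the gain in the exponent is $c\varepsilon^2\cp(\Sigma)$ and not $c\varepsilon^2\sum_B\cp(D_B)$, and then your Step~(i) union bound over witness points no longer closes: its cost is $\sum_B d\log(CL_B)$, and taking all boxes at a common scale $L$ with $|\pazocal{F}|=\lfloor\delta\cp(\Sigma)\rfloor$ makes this cost $\asymp\delta\,d\log(L)\,\cp(\Sigma)$, which exceeds $c\varepsilon^2\cp(\Sigma)$ for large $L$ no matter how small the ($L$--independent) $\delta$ is. The paper avoids this by replacing the union bound with a chaining estimate for $\Ex[\sup_f Z_f]$: the gradient bound \eqref{eq:gradient} for the harmonic field $\xi^B$ on $D_B$ shows that the index set $D_B$ has bounded metric entropy at the relevant resolution, so the supremum costs only $C(|\pazocal{F}|/\cp(\Sigma))^{1/2}\leq C\sqrt{\delta}$ rather than $\log|D_B|$ per box; the conclusion then follows from the Borell--TIS inequality together with the variance bound $\mathrm{Var}(Z_f)\leq C/\cp(\Sigma)$. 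Your reduction to one--sided events via the signs $\sigma_B$ at cost $2^{|\pazocal{F}|}\leq 2^{\delta\cp(\Sigma)}$ matches the paper's first step and is fine.
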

\begin{proof}
	It suffices to prove that for some constant $c'>0$, we have
	\begin{equation}\label{eq:harm_dev_proof1}
	\Pr[\bigcap_{B\in \pazocal{F}}\{\sup_{D} \xi^B\geq \varepsilon\}]\leq \exp\left(-c'\varepsilon^2 \cp(\Sigma)\right).
	\end{equation}
	Indeed, notice that $\xi^B$ are centered and either $\pazocal{F}^-=\{B\in \pazocal{F}: \; \inf_{D} \xi^B\leq -\varepsilon\}$ or $\pazocal{F}^+=\{B\in \pazocal{F}: \; \sup_{D} \xi^B\geq \varepsilon\}$ has capacity at least $\cp(\Sigma)/2$ by the sub-additivity of the capacity. Moreover, there are $2^{|\pazocal{F}|}\leq 2^{\delta \cp(\Sigma)}$ possibilities for $\pazocal{F}^{\pm}$, so it is enough to take $0<\delta<<c'$.

	The proof of \eqref{eq:harm_dev_proof1} is essentially the same as in \cite[Corollary 4.4]{MR3417515}. We will point out the necessary changes. The results mentioned throughout this proof are from \cite{MR3417515}. 
	We attach to $\pazocal{F}$ the collection $F$ of functions $f$ from $\pazocal{F}$ into $\Z^d$ such that $f(B)\in D$. Let $\lambda(B)=\nu(B)/\cp(\Sigma)$ and define
	$$Z_f=\sum_{B\in\pazocal{F}} \lambda(B)\psi^B(f(B))$$
	and
	$$Z=\sup_{f\in F}Z_f.$$
	We need to show that there exists a constant $C=C(d)>0$ such that 
	\begin{equation}\label{variance}
		\text{var}(Z_f)\leq \dfrac{C}{\cp(\Sigma)}
	\end{equation}
	for every $f\in F$ and 
	\begin{equation}\label{mean}
		\Ex[Z]\leq C \left(\frac{|\pazocal{F}|}{\cp(\Sigma)}\right)^{1/2}.
	\end{equation}
	The first inequality can be obtained by arguing as in the proof of \cite[Theorem 4.2]{MR3417515}. Due to the fact that boxes in $\pazocal{F}$ have in general different scales, we need to slightly modify the argument from \cite[Theorem 4.2]{MR3417515} ion order to obtain the second inequality. Indeed, following the proof of \cite[Lemma 4.3]{MR3417515} we get
	\begin{align}\label{second_moment}
		\begin{split}
		\Ex[&(Z_f-Z_k)^2]\\ &\leq C' \sum_{B,B'\in \pazocal{F}}\lambda(B)\lambda(B') \dfrac{\Vert f(B)-k(B)\Vert_{\infty}\Vert f(B')-k(B')\Vert_{\infty}}{LL'} \Ex[\xi^B(f(B))\xi^{B'}(f(B'))]
		\end{split}
	\end{align}
	for every $f,k\in F$, where $C'$ is a constant, $L$ denotes the scale of $B$ and $L'$ denotes the scale of $B'$.
	It follows from \eqref{second_moment}, \eqref{variance} and the fact that $\Vert f(B)-k(B) \Vert_\infty\leq 7L$ that
	$$\Ex[(Z_f-Z_k)^2]\leq 49C'\Ex(Z_f^2)\leq \dfrac{C''}{\cp(\Sigma)},$$
	where $C''=49CC'$.
	Setting $\widetilde{Z}_f=\sqrt{\cp(\Sigma)}Z_f$ we obtain
	$$\Ex[(\widetilde{Z}_f-\widetilde{Z}_k)^2]^{1/2}\leq \sqrt{C''}.$$
	Now given $\varepsilon\in (0,\sqrt{C''}]$, for every $L\geq 1$ we pick the largest integer $l$ such that $l\leq 7\varepsilon L/\sqrt{C''}$ and for each box $B\in \pazocal{F}$ of scale $L$, we partition $D$ into disjoint boxes, each having $\Vert \cdot \Vert_{\infty}$-diameter at most $l$. If $f,k\in F$ are such that for every $B\in \pazocal{F}$, $f(B)$ and $k(B)$ lie in the same box of $D$, then it follows from \eqref{second_moment} that $\Ex[(Z_f-Z_k)^2]\leq \varepsilon$. Arguing as in page $1820$ of \cite{MR3417515} we obtain \eqref{mean}. We can now use the Borell-TIS inequality as in the proof of \cite[Corollary 4.4]{MR3417515} to obtain 
	$$\Pr[\bigcap_{B\in \pazocal{F}}\{\sup_{D} \xi^B\geq \varepsilon\}]\leq \exp\left\{-\dfrac{1}{2\sigma^2}(\varepsilon-|\Ex(Z)|)_{+}\right\}$$
	with $\sigma^2=\sup_{F} \text{var}(Z_f)$.
	With \eqref{variance} and \eqref{mean} in hands, the desired result follows once we choose $\delta$ so that $C\sqrt{\delta}\leq \varepsilon/2$ and $0<\delta<<c'$.
\end{proof}

Notice that by applying Lemma~\ref{lem:multi_Szn} to a single box $B\in L\Z^d$ and recalling \eqref{ball cap}, we have
\begin{equation}\label{eq:single_bad}
	\Pr(\text{$B$ is $(\xi,\varepsilon)$-bad})\leq \exp\left(-c\varepsilon^2 L^{d-2}\right).
\end{equation}

\subsection{Bad boxes and multi-scale coarse graining}\label{subsec:coarse_graining}

Our aim now is to set up the abstract multi-scale coarse graining scheme used to prove \Tr{thm:exp_Zd}. This is encapsulated in \Tr{thm:multi-int} below, which is purely deterministic and whose proof is postponed to Section~\ref{sec:proof_coarse_graining}. In the next subsections, we deduce the desired exponential decay of capacity in the subcritical and supercritical phases separately by applying \Tr{thm:multi-int} with well chosen notions of ``bad'' and ``very-bad'' events. 

Let us start by giving some definitions and introducing some notations. For every $L\geq 1$ and $h\in \R$, let $\pazocal{C}_o(h,L)$ be the set of boxes of $L\Z^d$ that contain a vertex of $\pazocal{C}_o(h)$. We recall that the inner vertex boundary $\partial \pazocal{C}_o(h,L)$ of $\pazocal{C}_o(h,L)$ is defined as the set of boxes in $\pazocal{C}_o(h,L)$ that have a neighbour in $L\Z^d\setminus \pazocal{C}_o(h,L)$.

We will introduce a general framework that will allow us to study both the supercritical and the subcritical regime. To this end, we consider a family of ``bad events'' indexed by boxes $B=B_L(z)\in L\Z^d$, satisfying certain properties.

\begin{definition}[Admissible bad events]\label{def:bad_events}
	Given $L\geq1$, we say that a family of events $\pazocal{E}^{i}_B$ with $B=B_L(z)\in L\Z^d$, $i\in\{b,vb\}$, is \emph{$h$-admissible} if it satisfies the following properties:
	\begin{enumerate}[(i)]
		\item \label{item:disj} $\pazocal{E}^{b}_B$ and $\pazocal{E}^{vb}_B$ are disjoint for every $B$,
		\item \label{item:init} if $L\leq \mathrm{diam}(\pazocal{C}_o(h))<\infty$, then $\pazocal{E}_B:=\pazocal{E}^{b}_B\cup\pazocal{E}^{vb}_B$ happens for every $B\in \partial\pazocal{C}_o(h,L)$,
		\item \label{item:prop} if a pair $B,B'$ of neighbouring boxes lies in $\pazocal{C}_o(h,L)$ and $\pazocal{E}^{b}_B$ happens, then $\pazocal{E}_{B'}$ happens.
	\end{enumerate}
\end{definition}

For our purposes, both $\pazocal{E}^{b}_B$ and $\pazocal{E}^{vb}_B$ will be chosen to be unlikely events, with $\pazocal{E}^{vb}_B$ in particular being extremely unlikely, in the sense that its probability decays exponentially in $\cp(B)$.  Item \ref{item:init} can be thought of as an \emph{initiation property} that ensures that the union of the boxes $B\in \pazocal{C}_o(h,L)$ for which $\pazocal{E}_B$ happens, has capacity at least $\cp(\pazocal{C}_o(h))$. Item \ref{item:prop} can be thought of as a \emph{propagation property}. Ideally, we would like the event $\pazocal{E}^{vb}_B$ to happen for most boxes in $\partial\pazocal{C}_o(h,L)$. If this is not the case, then we have many boxes $B\in\partial\pazocal{C}_o(h,L)$ for which $\pazocal{E}^{b}_B$ happens. In this case, item \ref{item:prop} ensures that for many boxes in $\pazocal{C}_o(h,L)$ that are adjacent to $\partial\pazocal{C}_o(h,L)$, the event $\pazocal{E}_B$ happens. Continuing in this way we explore more and more boxes for which $\pazocal{E}_B$ happens.

With such events in hand, we will associate to $\pazocal{C}_o(h)$ an interface $\pazocal{I}$ such that for each box $B$ of $\pazocal{I}$, $\pazocal{E}_B$ happens. An \emph{interface} $\pazocal{I}$ is a finite collection of disjoint boxes of $L_1\Z^d, L_2\Z^d, \ldots L_k\Z^d$ for an integer $k>0$ and $1\leq L_1<L_2<\ldots<L_k$.
Most of the $\pazocal{I}$ we will consider, $o$ will be contained in a bounded component of $\Z^d \setminus \pazocal{I}$ (thus the term ``interface''), but it will be more convenient for us not to add this condition in the definition. When $\pazocal{E}_B$ happens for each box $B$ of $\pazocal{I}$, we will say that $\pazocal{I}$ \emph{occurs}. There are two subsets of $\pazocal{I}$ that play an important role. The first one, denoted $\pazocal{B}$, is the set of boxes $B\in I$ such that $\pazocal{E}^{b}_B$ happens. The second one, denoted $\pazocal{VB}$, is the set of all boxes $B\in \pazocal{I}$ such that $\pazocal{E}^{vb}_B$ happens. 

In the following theorem, we construct a family of interfaces $\mathcal{I}_N$ of small cardinality such that whenever $A_N(h)$ happens, some interface $\pazocal{I}\in \mathcal{I}_N$ occurs for which either $\pazocal{VB}$ has large capacity or $\pazocal{B}$ has large cardinality. 

\begin{theorem}[Multi-scale coarse graining]\label{thm:multi-int}
	Let $\pazocal{E}^{i}_B$, $i\in\{b,vb\}$, $B\in L\Z^d$, $L\geq1$, be a family of events which are admissible for each $L\geq1$. For every $\rho>0$ and $\delta>0$, there exist constants $0<t=t(d,\rho,\delta)<1$, $L_0=L_0(d,\rho,\delta)>0$, $N_0=N_0(d,\rho,\delta)>0$ such that for every $N\geq N_0$, there is a family $\mathcal{I}_N$ of interfaces such that the following hold:
	\begin{enumerate}[(a)]
		\item $|\mathcal{I}_N|\leq e^{\delta tN}$,
		\item for every $\pazocal{I}\in \mathcal{I}_N$, we have $L_0\leq L_1$ and $|\pazocal{I}|\leq \delta t N$,
		\item on the event $A_N(h)$, some $\pazocal{I}\in \mathcal{I}_N$ occurs with $L_k\leq \mathrm{diam}(\pazocal{C}_o(h))$ and $\pazocal{B}\subset L_1\Z^d$, and one of the following holds:
		\begin{enumerate}[label=(c\arabic*)]
			\item \label{item:c1} $\cp\big(\bigcup_{B\in \pazocal{VB}} B\big)\geq N/4d$,
			\item \label{item:c2} $|\pazocal{B}|L_1^{\rho}\geq tN$.
		\end{enumerate}
	\end{enumerate}
\end{theorem}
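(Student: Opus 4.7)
The plan is to construct the family $\mathcal{I}_N$ via a top-down multi-scale exploration: for each configuration $\varphi\in A_N(h)$, I would inductively identify boxes satisfying $\pazocal{E}$ at a decreasing sequence of scales, adding the very-bad ones to the interface immediately and refining the just-bad ones to the next smaller scale. The scales will be geometric, $L_i = L_0 M^{i-1}$ for $i\geq 1$, with $L_0$ chosen large depending on $\rho,\delta,d$ and $M$ a large integer tuned so that the branching factor $M^d$ is compatible with the $e^{\delta tN}$ budget.

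Given $\varphi\in A_N(h)$, let $k$ be the largest index with $L_k\leq\mathrm{diam}(\pazocal{C}_o(h))$; this is well-defined for $N\geq N_0$ since the sub-additive bound $\cp(K)\lesssim\mathrm{diam}(K)^{d-2}$ combined with $\cp(\overline{\pazocal{C}_o(h)})\geq N-1$ forces $\mathrm{diam}(\pazocal{C}_o(h))\gtrsim N^{1/(d-2)}$. At scale $L_k$, I would start from $\partial\pazocal{C}_o(h,L_k)$ --- every box of which satisfies $\pazocal{E}$ by property (ii) --- and then iteratively enlarge the collection via (iii), adding any box of $\pazocal{C}_o(h,L_k)$ adjacent to a just-bad box already in it. The resulting family $\pazocal{F}_k\subseteq\pazocal{C}_o(h,L_k)$ splits by (i) as $\pazocal{VB}_k\sqcup\pazocal{B}_k$. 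Descending to scale $L_i<L_k$, for each $B\in\pazocal{B}_{i+1}$ I would inspect the scale-$L_i$ sub-boxes of $B$: those lying in $\partial\pazocal{C}_o(h,L_i)$ satisfy $\pazocal{E}$ by (ii), and propagating via (iii) produces $\pazocal{F}_i = \pazocal{VB}_i\sqcup\pazocal{B}_i$, with $\pazocal{VB}_i$ added to the interface.

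The termination rule is to stop at the first scale $L_i$ at which $\cp\bigl(\bigcup_{j\geq i}\bigcup_{B\in\pazocal{VB}_j}B\bigr)\geq N/4d$, producing (c1); otherwise descend all the way to scale $L_1$ where the accumulated $\pazocal{B}_1$ is appended to the interface. In the latter case, I would argue (c2) as follows: since (c1) never triggered, the total capacity contributed by very-bad boxes across all scales falls below $N/4d$, so by the capacity lower bound $\cp(\overline{\pazocal{C}_o(h)})\geq N-1$ together with a bookkeeping argument across scales that exploits (iii)-propagation to control how capacity can migrate from higher scales to the lowest one, the just-bad boxes at the bottom scale must be numerous enough that $|\pazocal{B}_1|L_1^\rho\geq tN$, provided $\rho<d-2$ and $L_0$ is taken large.

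For the combinatorial bound $|\mathcal{I}_N|\leq e^{\delta tN}$, note that the top-scale set $\pazocal{F}_k$ consists of $O_d(1)$ boxes since $L_k\asymp\mathrm{diam}(\pazocal{C}_o(h))$, and at each descent from scale $L_{i+1}$ to $L_i$, each parent box in $\pazocal{B}_{i+1}$ refines into at most $M^d$ children, so the choice of $\pazocal{F}_i$ and its VB/B-labeling costs at most $(2\cdot 2^{M^d})^{|\pazocal{B}_{i+1}|}$. The total count is therefore bounded by $\exp\bigl\{C(M,d)\sum_i|\pazocal{B}_i|\bigr\}$. The hard part --- and most delicate aspect of the argument --- is the simultaneous ``capacity ledger'' across scales: one must (a) bound $\sum_i|\pazocal{B}_i|$ and $|\pazocal{I}|$ by $\delta tN$, (b) verify (c2) when (c1) fails, and (c) ensure that the (iii)-propagation mechanism forces the emergence of either enough very-bad capacity at higher scales or enough just-bad boxes at the smallest scale, so that the dichotomy closes. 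Arranging the descent so that these constraints all tighten up with a single choice of $L_0, M, t, N_0$ is where the real work lies.
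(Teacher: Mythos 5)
Your overall architecture (descend through scales, harvest very-bad boxes immediately, refine just-bad boxes, stop when the very-bad capacity reaches $N/4d$, otherwise bottom out with many just-bad boxes) matches the paper's in spirit, but there is a genuine gap at exactly the point you flag as ``the hard part'': with a \emph{fixed} geometric sequence of scales $L_i=L_0M^{i-1}$ and a full descent, nothing caps the number of bad boxes per scale, and conclusions (a) and (b) fail. Stopping ``the first time $\cp(\bigcup\pazocal{VB})\geq N/4d$'' does not bound $\sum_i|\pazocal{VB}_i|$ (capacity is subadditive, so an enormous number of clustered small boxes can have capacity below $N/4d$), and $|\pazocal{B}_1|$ can be as large as the number of $L_0$-boxes meeting $\pazocal{C}_o(h)$, which is of order $N^{d/(d-2)}\gg N$. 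The same blow-up kills the combinatorial count: your bound $\exp\{C(M,d)\sum_i|\pazocal{B}_i|\}$ is only useful if $\sum_i|\pazocal{B}_i|=o(N)$, and since $|\pazocal{B}_i|$ can grow by a factor $M^d$ at each of the $\asymp\log N$ descents, the lowest scales can contribute $\exp\{cN^{d/(d-2)}\}$ choices, far exceeding $e^{\delta tN}$.

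The paper's resolution is to make the scales \emph{adaptive and configuration-dependent}: at each step one takes the largest dyadic scale at which the relevant interface has at least $N/f^{\circ i}(N)$ boxes (with $f(N)=\log^bN$, $b=3(d-2)/\rho$), so that at twice that scale it has fewer than $N/f^{\circ i}(N)$ boxes and one adds at most $N/f^{\circ i}(N)$ boxes per step; the segment structure with these iterated-logarithm budgets makes both $|\pazocal{I}|$ and $\log|\mathcal{I}_N|$ summable to $\leq\delta tN$, while a capacity lower bound forces each segment to terminate after finitely many steps at a scale at least $f^{\circ i}(N)^{1/\rho}$, which is what delivers \ref{item:c2}. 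Separately, the ``bookkeeping argument across scales'' you invoke to verify \ref{item:c2} is itself a nontrivial inductive separating-set lemma (the paper's Lemma~\ref{lem:multi-cutset}): one must show that at every stage the accumulated very-bad boxes together with $\partial\pazocal{C}_o(h)$ intersected with the current bad set separate $\pazocal{C}_o(h)$ from infinity, which uses the propagation property (iii) at \emph{all} previously visited scales simultaneously. As written, your proposal postpones both the quantitative control and this lemma, and the fixed-scale framework you chose makes the quantitative part unachievable.
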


We stress that the constants $t,L_0$ and $N_0$ in the above theorem depend only on $d$, $\rho$ and $\delta$ and not on the choice of $\pazocal{E}^{b}_B$ and $\pazocal{E}^{vb}_B$. We also remark that for our applications, $\pazocal{E}_B$ will be chosen in such a way that its probability decays stretched exponentially with exponent the constant $\rho$ appearing in the statement of the theorem.

\subsubsection{Exponential decay in the supercritical regime}\label{subsec:exp_decay_sup}

We will split the proof of \Tr{thm:exp_Zd} into two parts, depending on whether $h$ belongs to the supercritical or the subcritical regime. We will first handle the supercritical regime. Our aim is to choose $\pazocal{E}^{b}_B$ and $\pazocal{E}^{vb}_B$ appropriately and then apply \Tr{thm:multi-int}. 

To this end, consider an integer $L>0$ and a box $B\in L\Z^d$. We define $L_0=L/M\approx L^{\frac{1}{d-1}}\log(L)$, where $M=\left\lfloor L^{\frac{d-2}{d-1}}/\log(L)\right\rfloor$.  A connected subgraph of $B$ is called \emph{dense} if it intersects at least $\frac{3}{4}M^d$ boxes of $L_0\Z^d$ contained in $B$ and has diameter at least $L/5$ -- the latter follows immediately for any connected subgraph that intersects at least $\frac{3}{4}M^d$ boxes contained in $B$, provided that $L$ is large enough, but we will not need this fact.

Fix $h'<h_*$ and $\varepsilon_0:=(h_*-h')/2$. For any $h\leq h'$, let $\pazocal{E}^{b}_B$ be the intersection of the events

\begin{enumerate}[label=(b\arabic*)]
	\item \label{item:bad_1} for every $B'$ which is either $B$ or a neighbour of $B$, $\{\varphi\geq h\}\cap B'$ contains a dense cluster,
	\item \label{item:bad_2} $\{\varphi\geq h\}\cap B$ contains a dense cluster that is not contained in $\pazocal{C}_o(h)$,
	\item \label{item:bad_3} $B$ is $(\xi,\varepsilon_0)$-good,
\end{enumerate}
and $\pazocal{E}^{vb}_B$ be the union of the events

\begin{enumerate}[label=(vb\arabic*)]
	\item \label{item:very_bad_1} for some $B'$ which is either $B$ or a neighbour of $B$, $\{\varphi\geq h\}\cap B'$ does not contain a dense cluster,
	\item \label{item:very_bad_2} all dense clusters of $\{\varphi\geq h\}\cap B$ are contained in $\pazocal{C}_o(h)$, but for some neighbouring box $B'$ of $B$, $\{\varphi\geq h\}\cap B'$ contains a dense cluster that is not contained in $\pazocal{C}_o(h)$,
	\item \label{item:very_bad_3} $B$ is $(\xi,\varepsilon_0)$-bad.
\end{enumerate}

We shall verify that the family of events $\pazocal{E}^{i}_B$ is $h$-admissible. It is straightforward to verify that $\pazocal{E}^{b}_B$ and $\pazocal{E}^{vb}_B$ are disjoint for every $B$, so that \ref{item:disj} holds. Let us verify \ref{item:init}. Consider a box $B\in \partial \pazocal{C}_o(h,L)$. If $\pazocal{E}^{vb}_B$ happens, then there is nothing to show. If $\pazocal{E}^{vb}_B$ does not happen, then the non occurrence of \ref{item:very_bad_1} and \ref{item:very_bad_3} directly implies the occurrence of \ref{item:bad_1} and \ref{item:bad_3}, respectively. It remains to check that \ref{item:bad_2} holds. Let $B'\in L\Z^d\setminus \pazocal{C}_o(h,L)$ be a neighbour of $B$. Since \ref{item:bad_1} happens, $\{\varphi\geq h\}\cap B'$ contains a dense cluster, which in turn is not contained in $\pazocal{C}_o(h)$ as $B'$ is disjoint from it. From this and our assumption that \ref{item:very_bad_2} does not happen, we can conclude that \ref{item:bad_2} happens, as we wanted. Finally, let us verify \ref{item:prop}. Consider two neighboring boxes $B,B''\in \pazocal{C}_o(h,L)$ such that $\pazocal{E}^{b}_B$ happens. If $\pazocal{E}^{vb}_{B''}$ happens, then there is nothing to show. Otherwise, \ref{item:bad_1} and \ref{item:bad_3} clearly happen for $B''$ in place of $B$. It is not hard to see that property \ref{item:bad_2} happens for $B''$, since \ref{item:bad_2} happens for $B$ and \ref{item:very_bad_2} does not happen for $B''$. 

The events appearing in \ref{item:bad_2}, \ref{item:very_bad_1} and \ref{item:very_bad_2} are unlikely to happen. However, it will be convenient for us to work with events that, in addition to being unlikely, are independent on different boxes that are far away from each other. For this reason, we will now introduce certain local bad and very-bad events. In what follows, given a box $B=B_L(z)$, $U$ stands for $U_L(z)$ and $D$ stands for $D_L(z)$.

We say that $B$ is $(\psi,h,\varepsilon)$-\emph{good} if for every function $g:\overline{D}\rightarrow \R$ which is harmonic in $D$ and $|g(x)|<\varepsilon$ for all $x\in D$, the following happen:
\begin{itemize}
	\item $\{\psi^B+g\geq h\}\cap U$ contains a cluster of diameter at least $L/5$,
	\item for every pair $\pazocal{C}_1,\pazocal{C}_2$ of clusters of $\{\psi^B+g\geq h\}\cap U$ of diameter at least $L/5$, there is a path in $\{\psi^B+g\geq h\}\cap D$ connecting $\pazocal{C}_1$ to $\pazocal{C}_2$.
\end{itemize}  
If $B$ is not $(\psi,h,\varepsilon)$-good, we will call it $(\psi,h,\varepsilon)$-\emph{bad}. It is not hard to see that if $\pazocal{E}^{b}_B$ happens and $L\leq \mathrm{diam}(\pazocal{C}_o(h))$, then $B$ is $(\psi,h,\varepsilon_0)$-bad (with the choice $g=\xi^B$), since $\pazocal{C}_o(h)\cap U$ contains a cluster of diameter at least $L/5$.
The following result will be proved in Section~\ref{sec:decay_bad}.

\begin{proposition}[Decay of badness]\label{prop:psi-bad}
	For every $h'<h_*$ and $0<\varepsilon< h_*-h'$, there exist constants $c_1=c_1(h',\varepsilon)>0$ and $\rho=\rho(d)>0$ such that for every $h\leq h'$ and $L\geq 1$,
	$$\Pr[\text{$B_L$ is $(\psi,h,\varepsilon)$-bad}]\leq e^{-c_1 L^{\rho}}.$$
\end{proposition}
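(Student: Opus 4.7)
The plan is to reduce $(\psi,h,\varepsilon)$-goodness to a deterministic structural condition on $\psi^B$ at the two pinned levels $h_\pm:=h\pm\varepsilon$, and then to transfer this condition from $\psi^B$ to the GFF $\varphi$ via the harmonic decomposition $\varphi=\psi^B+\xi^B$ so as to apply the supercritical sharpness estimates for $\varphi$. Note that the hypotheses $h\leq h'$ and $\varepsilon<h_*-h'$ yield $h_+<h_*$, so both $h_\pm$ sit strictly in the supercritical regime uniformly in $h\leq h'$.

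The first step is a deterministic implication: $B_L$ is $(\psi,h,\varepsilon)$-good as soon as the following two properties of $\psi^B$ hold:
\begin{enumerate}[label=(S\arabic*)]
    \item $\{\psi^B\geq h_+\}\cap U$ contains a cluster $\pazocal{C}^*$ of diameter at least $L/5$;
    \item writing $\pazocal{C}^*_D$ for the cluster of $\{\psi^B\geq h_+\}\cap D$ containing $\pazocal{C}^*$, every connected component of $(\{\psi^B\geq h_-\}\cap U)\setminus \pazocal{C}^*_D$ has diameter strictly less than $L/5$.
\end{enumerate}
Indeed, for any harmonic $g:\overline D\to\R$ with $|g|<\varepsilon$, (S1) produces a large cluster in $\{\psi^B\geq h_+\}\cap U\subset\{\psi^B+g\geq h\}\cap U$. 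For the second property of goodness, any cluster $\pazocal{C}_i$ of $\{\psi^B+g\geq h\}\cap U$ of diameter $\geq L/5$ is a connected subset of $\{\psi^B\geq h_-\}\cap U$ of the same diameter, and by (S2) it must intersect $\pazocal{C}^*_D$ at a vertex $v_i$; since $\psi^B(v_i)\geq h_+$ one has $v_i\in\{\psi^B+g\geq h\}$, and the vertices $v_1,v_2$ are connected within $\pazocal{C}^*_D\subset\{\psi^B+g\geq h\}\cap D$, merging $\pazocal{C}_1$ and $\pazocal{C}_2$ inside $D$.

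Next, I would transfer the failures of (S1) and (S2) to events about $\varphi$. Fix $\delta=\delta(h',\varepsilon)>0$ small enough that $h_++\delta<h_*$. On the event $\{|\xi^B|<\delta\text{ on }D\}$, whose complement has probability at most $e^{-c\delta^2L^{d-2}}$ by \eqref{eq:single_bad}, the decomposition $\varphi=\psi^B+\xi^B$ yields the sandwich $\{\varphi\geq h_\pm+\delta\}\cap D\subset\{\psi^B\geq h_\pm\}\cap D\subset\{\varphi\geq h_\pm-\delta\}\cap D$. Consequently, up to this rare $\xi^B$-event, a failure of (S1) implies that $\{\varphi\geq h_++\delta\}\cap U$ contains no cluster of diameter $\geq L/5$, while a failure of (S2) implies the existence of a connected subset of $\{\varphi\geq h_--\delta\}\cap U$ of diameter $\geq L/5$ that is disjoint from every cluster of $\{\varphi\geq h_++\delta\}\cap D$ of diameter $\geq L/5$---a ``sprinkled two-arm'' event at levels $h_\pm\pm\delta$, all of which are uniformly supercritical. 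By the sharpness identification $\bar h=h_*$ of \cite{DGRS20}, the strongly supercritical renormalization estimates for GFF level sets yield that each of these events has probability at most $e^{-cL^{\rho(d)}}$, and a union bound closes the proof.

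The main obstacle is the uniqueness/gluing estimate invoked just above: showing that with stretched-exponential probability, two macroscopic clusters of $\{\varphi\geq h''\}$ inside a box must merge in a slightly larger concentric box at a slightly higher level, uniformly in $h''<h_*$ bounded away from $h_*$. This is the deepest probabilistic input and is precisely the content of the ``strongly supercritical'' regime $h<\bar h$ made available by \cite{DGRS20}. The deterministic implication of Step~1 and the bookkeeping on the auxiliary shifts $\varepsilon$ and $\delta$ needed to keep every relevant level uniformly below $h_*$ are routine but require some care.
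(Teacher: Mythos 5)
Your deterministic reduction is correct as far as it goes: (S1)$\wedge$(S2) does imply $(\psi,h,\varepsilon)$-goodness, and the treatment of $\neg$(S1) (sandwiching via the event $\{|\xi^B|<\delta \text{ on } D\}$, \eqref{eq:single_bad}, and single-level existence from \cite{DGRS20}) is sound. The gap is in $\neg$(S2). What you need there is a \emph{constant-gap two-level} statement: with probability $1-e^{-cL^{\rho}}$, every connected subset of $\{\varphi\geq h_--\delta\}\cap U$ of diameter $\geq L/5$ must meet the macroscopic cluster of $\{\varphi\geq h_++\delta\}\cap D$, where $h_+-h_-=2\varepsilon$ is a fixed constant. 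This is \emph{not} the content of $\bar h=h_*$: the estimates of \cite{DGRS20} are single-level (existence and coalescence of macroscopic clusters of $\{\varphi\geq\lambda\}$ at one fixed $\lambda<h_*$), and no sprinkling direction helps you here — indeed your closing sentence asks for merging ``at a slightly higher level'', which for GFF level-sets is the unfavourable direction. Worse, (S2) is likely simply false with non-negligible probability in part of the admissible range: when $h=h'$ is close to $h_*$ and $\varepsilon$ close to $h_*-h'$, the cluster $\pazocal{C}^*_D$ has density roughly $\theta(h_+)$, which is small, while $\{\varphi\geq h_-\}$ has density bounded below; removing a sparse connected skeleton from the much denser lower-level set should leave macroscopic connected components (built from the slab $\{h_-\leq\varphi<h_+\}$ together with the finite clusters of $\{\varphi\geq h_+\}$, which carry almost all of the mass of that level-set near criticality). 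So your sufficient condition is too strong to be useful uniformly in $h\leq h'$, and in any case you give no argument for it. There is also a secondary slip in the transfer: $A$ is only known to avoid $\pazocal{C}^*_D$, not every macroscopic cluster of $\{\varphi\geq h_++\delta\}\cap D$; fixing this needs an extra appeal to single-level uniqueness at $h_+$.

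The paper's proof is organised precisely to avoid any comparison of levels at constant separation. The supremum over harmonic perturbations $g$ with $|g|<\varepsilon$ is tamed not by bracketing $g$ between the constants $\pm\varepsilon$ globally, but by the gradient estimate \eqref{eq:gradient}: on a mesoscopic sub-box of side $L^2$ inside $B_N$, such a $g$ oscillates by at most $O(\varepsilon L^{-\alpha})$, so locally one only ever compares two levels $h-kL^{-\alpha}$ and $h-rL^{-\alpha}$ separated by $O(L^{-\alpha})$. The two-level comparison at that polynomially small separation is exactly what the confinement estimate (Lemma~\ref{lemma:small}) controls, and its proof genuinely requires the window width to be $O(L^{-\alpha})$ with $\alpha$ large — the bound $(h_2-h_1)^{L/(2d+1)}$ must beat the entropy $(7L^2)^{dL}$ of choosing the confined vertices, which fails for a window of constant width $2\varepsilon$. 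The single-level input from \cite{DGRS20} is then only ever used at one level at a time, over polynomially many levels (the NSLU event). If you want to keep your cleaner bracketing structure, you would have to prove the constant-gap absorption statement (S2) directly, which is a substantially harder (and, as argued above, probably false in general) assertion than anything supplied by sharpness.
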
 

We now define another local event. We say that $B$ is $(\psi,h,\varepsilon)$-\emph{very-good} if for every function $g:\overline{D}\rightarrow \R$ which is harmonic in $D$ and $|g(x)|<\varepsilon$ for all $x\in D$, the following happen: 
\begin{itemize}
	\item for every $B'$ which is either $B$ or some neighbour of $B$, $\{\psi^B+g\geq h\}\cap B'$ contains a dense cluster,
	\item for every neighbour $B''$ of $B$ and every pair of dense clusters of $\{\psi^B+g\geq h\}\cap B$ and $\{\psi^B+g\geq h\}\cap B''$, respectively, there is a path in $\{\psi^B+g\geq h\}\cap D$ visiting both dense clusters.
\end{itemize} 
If $B$ is not $(\psi,h,\varepsilon)$-very-good, we will call it $(\psi,h,\varepsilon)$-\emph{very-bad}. It is not hard to see that if $\pazocal{E}^{vb}_B$ happens and $B$ is $(\xi,\varepsilon_0)$-good, then $B$ is $(\psi,h,\varepsilon_0)$-very-bad. The following result will be proved in Section~\ref{sec:decay_verybad}.

\begin{proposition}[Decay of very-badness]\label{prop:psi-very-bad}
	For every $h'<h_*$ and $0<\varepsilon<h_*-h'$, there exist a constant $c_2=c_2(h',\varepsilon)>0$ such that for every $h\leq h'$ and $L\geq 1$,
	$$\Pr[\text{$B_L$ is $(\psi,h,\varepsilon)$-very-bad}]\leq e^{-c_2 L^{d-2}}.$$
\end{proposition}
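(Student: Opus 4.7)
The plan is to bound the very-bad event by the failure of a monotone, $g$-independent event $\mathcal{G}$ built from the field $\psi^B$. Since any admissible $g$ satisfies $|g|<\varepsilon$ on $D$, we have the deterministic sandwich
\[
\{\psi^B\geq h+\varepsilon\}\ \subseteq\ \{\psi^B+g\geq h\}\ \subseteq\ \{\psi^B\geq h-\varepsilon\},
\]
and both levels $h_{\pm}:=h\pm\varepsilon$ lie strictly below $h_*$ by the hypothesis $\varepsilon<h_*-h'$. I will define $\mathcal{G}$ as the intersection of: (i) for every $B'\in\{B\}\cup\mathrm{neighbours}(B)$, the set $\{\psi^B\geq h_+\}\cap B'$ contains a dense cluster; (ii) there is a unique connected component $G_+$ of $\{\psi^B\geq h_+\}\cap D$ of diameter at least $L/5$; and (iii) every connected subset of $\{\psi^B\geq h_-\}\cap D$ of diameter at least $L/5$ shares at least one vertex with $G_+$.

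The verification that $\mathcal{G}$ implies $(\psi,h,\varepsilon)$-very-goodness is then direct: the first property follows from (i) together with the sandwich, because the dense cluster at level $h_+$ survives inside $\{\psi^B+g\geq h\}\cap B'$. For the second property, any dense cluster $\mathcal{C}$ of $\{\psi^B+g\geq h\}\cap B$ is connected, has diameter at least $L/5$, and lies in $\{\psi^B\geq h_-\}\cap D$; hence by (iii) it meets $G_+$. Since $G_+\subseteq\{\psi^B\geq h_+\}\subseteq\{\psi^B+g\geq h\}$, the union $\mathcal{C}\cup G_+$ is connected inside $\{\psi^B+g\geq h\}\cap D$, and the same argument applied to a dense cluster in any neighbour $B''$ links the two through $G_+$, producing the required path.

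It remains to bound $\Pr[\neg\mathcal{G}]$ by $e^{-c_2 L^{d-2}}$. The failures of (i) and (ii) are respectively the absence of a dense cluster and the local non-uniqueness of the macroscopic cluster at the supercritical level $h_+$; both are by now classical and decay like $\exp(-cL^{d-2})$ by the renormalization schemes developed, for instance, in \cite{RodriguezSznitman13,MR3417515,DGRS20}. A mild bookkeeping step transfers these bounds from $\psi^B$ to the full GFF $\varphi$ via the orthogonal decomposition $\varphi=\psi^B+\xi^B$ and a Borell--TIS bound on $\xi^B$, which is uniformly small on $D$ with overwhelming probability because $K_L$ is a hundred times larger than $D$. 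The main obstacle is (iii), a sprinkling/absorption statement asserting that $G_+$ cannot be circumvented by a macroscopic connected structure at the slightly lower threshold $h_-$. Proving this requires combining the uniqueness of the supercritical cluster at level $h_+$ (which eliminates the small ``side clusters'' in $\{\psi^B\geq h_+\}$) with a quantitative control of the ``band'' sites $\{\psi^B\in[h_-,h_+)\}\cap D$: one must rule out a long connection inside $\{\psi^B\geq h_-\}\cap D$ that avoids $G_+$, and this is where the deep supercritical machinery (coarse-graining together with large-deviation bounds on clusters disjoint from $G_+$) must be brought to bear.
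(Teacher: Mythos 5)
Your reduction to a $g$-independent event via the sandwich $\{\psi^B\geq h+\varepsilon\}\subseteq\{\psi^B+g\geq h\}\subseteq\{\psi^B\geq h-\varepsilon\}$ is sound in spirit (the paper performs an analogous reduction, passing from $\psi$-very-badness to $(\varphi,h,\varepsilon+\delta)$-very-badness on the event that $B$ is $(\xi,\delta)$-good, which is controlled by the Borell--TIS-type bound \eqref{eq:single_bad}). The genuine gap is in the probabilistic estimates. You assert that the failure of (i) and (ii) --- absence of a dense cluster, and non-uniqueness of the macroscopic cluster at level $h_+$ --- decays like $\exp(-cL^{d-2})$ ``by now classical'' renormalization. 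It does not: the input available from supercritical sharpness \cite{DGRS20} is only a \emph{stretched} exponential bound $\exp(-cL^{\rho})$ with an unspecified $\rho=\rho(d)\in(0,1)$, which is far weaker than $\exp(-cL^{d-2})$ and is useless for the series argument driving Theorem~\ref{thm:exp_Zd}. The entire content of Proposition~\ref{prop:psi-very-bad} is precisely the boosting from $L^{\rho}$ to $L^{d-2}$, and this is where the paper's work lies: it tiles $U_L$ by boxes of the intermediate scale $L_0\approx L^{1/(d-1)}\log L$, shows via the combinatorial Lemma~\ref{connected} that failure of dense-cluster existence forces $(\varphi,h+\varepsilon)$-bad $L_0$-boxes in at least $cM^{d-1}$ \emph{distinct columns}, extracts a well-separated subfamily whose local fields are independent (so the stretched-exponential single-box bound multiplies up to $\exp(-c|S'|L_0^{\rho})\leq\exp(-cL^{d-2})$), and handles the harmonically-bad boxes by combining Lemma~\ref{lem:col_cap} (a set meeting many columns has capacity $\gtrsim L^{d-2}$) with Sznitman's capacity estimate Lemma~\ref{lem:multi_Szn}. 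None of this multi-box, capacity-based amplification appears in your proposal.

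Separately, your item (iii) --- that every diameter-$L/5$ connected subset of $\{\psi^B\geq h_-\}\cap D$ must meet the unique macroscopic cluster $G_+$ at the higher level $h_+$ --- is exactly the two-level uniqueness statement that you acknowledge you do not prove, and it is not a consequence of the cited references at the required rate $\exp(-cL^{d-2})$ either. The paper sidesteps the need for such a sprinkled statement by introducing the event $H(h,\varepsilon,B)$ at scale $L_0$ and again counting columns: a pair of unconnected dense clusters in adjacent $L$-boxes forces $H(h,\varepsilon,\cdot)$ (or $(\xi,\varepsilon_0)$-badness) in at least $M^{d-1}/2$ columns, after which the same well-separation/capacity dichotomy applies. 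As it stands, your argument reduces the proposition to two unproved claims whose difficulty is essentially that of the proposition itself.
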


Assuming \Tr{thm:multi-int} and Propositions~\ref{prop:psi-bad} and \ref{prop:psi-very-bad}, we are now in position to prove \Tr{thm:exp_Zd} for $h$ in the supercritical regime.

\begin{proof}[Proof of Theorem~\ref{thm:exp_Zd} for $h<h_*$.]
	Consider some $h\leq h'<h_*$ and let $\rho>0$ be the exponent of Proposition \ref{prop:psi-bad}. Consider also a constant $\delta>0$ which will be chosen along the way to be sufficiently small. We start by applying \Tr{thm:multi-int} for the choice of events $\pazocal{E}^{b}_B$ and $\pazocal{E}^{vb}_B$ mentioned above to obtain a family $\mathcal{I}_N$ as in the statement of the theorem. For each $\pazocal{I}\in \mathcal{I}_N$, we will prove an exponential upper bound for the probability that $\pazocal{I}$ occurs satisfying either \ref{item:c1} or \ref{item:c2} and then apply a union bound over all $\pazocal{I}\in\mathcal{I}_N$.
		
	First, let us fix $\pazocal{I}\in\mathcal{I}_N$ and a pair of subsets $\pazocal{I}_1,\pazocal{I}_2\subset\pazocal{I}$ such that $\pazocal{I}_1$ satisfies $\cp\big(\bigcup_{B\in \pazocal{I}_1} B\big)\geq N/4d$ and $\pazocal{I}_2$ satisfies $\pazocal{I}_2\subset L_1\Z^d$ (where $L_1$ is the smallest scale of $\pazocal{I}$) and $|\pazocal{I}_2|L_1^{\rho}\geq tN$. We will bound separately the probability that $\pazocal{VB}=\pazocal{I}_1$ and $\pazocal{B}=\pazocal{I}_2$. We start by the latter. Let $\pazocal{I}'_2$ be a well-separated subset of $\pazocal{I}_2$ that is maximal with respect to this property. By the maximality of $\pazocal{I}'_2$, for every $B_{L_1}(z)\in \pazocal{I}_2$, there is some $B_{L_1}(w)\in S$ such that $K_{L_1}(z)\cap K_{L_1}(w)\neq \emptyset$, hence $|\pazocal{I}'_2|\geq 201^{-d}|\pazocal{I}_2|$. Notice that the local fields $\psi^B$, $B\in \pazocal{I}'_2$ are independent of each other (since $\pazocal{I}'_2$ is well-separated) and each box in $\pazocal{I}'_2$ is $(\psi,h,\varepsilon_0)$-bad (since the boxes of $\pazocal{I}$ have scale smaller than the diameter of $\pazocal{C}_o(h)$). Therefore, by Proposition \ref{prop:psi-bad} and independence, we have
	$$\Pr[\pazocal{I} \text{ occurs with }\pazocal{B}=\pazocal{I}_2]\leq \exp\{-201^{-d}c_1tN\}.$$
	
	We shall now bound the probability that $\pazocal{VB}=\pazocal{I}_1$. First, we restrict $\pazocal{I}_1$ to a well-separated subset with capacity of order $N$. Let $L_1<L_2<\ldots<L_k$ be the scales of $\pazocal{I}_1$. Let $\pazocal{I}_1^k$ be a subset of $\pazocal{I}_1\cap L_k\Z^d$ which is well-separated and maximal with respect to this property. Proceeding inductively, for each $i\in \{1,2,\ldots,k\}$, let $\pazocal{I}_1^i$ be a subset of $\pazocal{I}_1\cap L_i\Z^d$ such that $\bigcup_{j=i}^k \pazocal{I}_1^j$ is well-separated and $\pazocal{I}_1^i$ is a maximal set with respect to this property. Finally, let $\pazocal{I}'_1=\bigcup_{j=1}^k \pazocal{I}_1^j$. It follows from the maximality of the construction that for every $B\in \pazocal{I}_1$ of scale $L_i$ there exists $B'\in \pazocal{I}'_1$ of scale $L_j\geq L_i$ such that the $\Vert \cdot \Vert_{\infty}$-distance between $B$ and $B'$ is at most $201L_j$. In this case, for every $x\in B$ we have that $\Pr_x[H_{B'}<\infty] \geq q$, where $q=q(d)>0$ is a constant depending only on the dimension $d$ -- see e.g.~\cite[Proposition 2.2.2]{Law91}. 
	It then follows from the sweeping identity \eqref{sweeping} that $\cp(\Sigma(\pazocal{I}'_1))\geq q\cp(\Sigma(\pazocal{I}_1))\geq qN/4d$.
	
	Notice that each box in $\pazocal{I}'_1$ is either $(\xi,\varepsilon_0)$-bad or $(\psi,h,\varepsilon_0)$-very-bad. Let $\Xi(\pazocal{I}'_1)$ be the (random) union of the boxes in $\pazocal{I}'_1$ that are $(\xi,\varepsilon_0)$-bad and let $\Psi(\pazocal{I}'_1)$ be the (random) union of the boxes in $\pazocal{I}'_1$ that are $(\psi,h,\varepsilon_0)$-very-bad. By the sub-additivity of the capacity, 
	$$\text{either} \quad \cp(\Xi(\pazocal{I}'_1))\geq \frac{qN}{8d} \quad \text{or} \quad \cp(\Psi(\pazocal{I}'_1))\geq \frac{qN}{8d}.$$ 
	Applying \Lr{lem:multi_Szn} and a union bounded over all possibilities for $\Xi(\pazocal{I}'_1)$ we obtain
	\begin{align*} 
	\Pr\left[\pazocal{I} \text{ occurs with } \pazocal{VB}=\pazocal{I}_1 \text{ and }\cp(\Xi(\pazocal{I}'_1))\geq \frac{qN}{8d}\right]&\leq \sum_{\pazocal{J}} \exp\left\{-c_0\varepsilon_0^2\cp(\pazocal{J})\right\} \\
	&\leq 2^{\delta tN}\exp\left\{\frac{-c_0\varepsilon_0^2 qN}{8d}\right\},
	\end{align*}
	where the sum ranges over all possible $\pazocal{J}$ such that $\cp(\pazocal{J})\geq \frac{qN}{8d}$. Recall that $|\pazocal{J}|\leq |\pazocal{I}'_1|\leq |\pazocal{I}|\leq \delta t N$, so that we can indeed guarantee that $\pazocal{J}$ satisfies the hypothesis of Lemma~\ref{lem:multi_Szn} by decreasing the value of $\delta$ if necessary. The term $2^{\delta tN}$ above accounts for the number of possible $\pazocal{J}$. For the second case, notice that 
	$$\cp(\Psi(\pazocal{I}'_1))\leq \sum_{B_{L_i}(z)\in \Psi(\pazocal{I}'_1)}\cp(B_{L_i}(z))\leq C\sum_{B_{L_i}(z)\in \Psi(\pazocal{I}'_1)}L_i^{d-2}$$ 
	by the sub-additivity of the capacity and \eqref{ball cap}. Hence by Proposition~\ref{prop:psi-very-bad}, we have 
	\begin{align*} 
	\Pr\left[\pazocal{I} \text{ occurs with } \pazocal{VB}=\pazocal{I}_1 \text{ and } \cp(\Psi(\pazocal{I}'_1))\geq \frac{qN}{8d}\right]&\leq \sum_{\pazocal{J}}\exp\left\{-c_2\sum_{B_{L_i}(z)\in \pazocal{J}}L_i^{d-2}\right\}\\ 
	&\leq 2^{\delta tN}\exp\left\{-\frac{c_2qN}{8Cd}\right\}.
	\end{align*}
	
	Since $|\mathcal{I}_N|\leq e^{\delta tN}$, applying a union bound over all $\pazocal{I}\in \mathcal{I}_N$ and all possible $\pazocal{I}_1,\pazocal{I}_2\subset \pazocal{I}$, and decreasing $\delta$ even further, if necessary, we obtain that
	\begin{align*}
		\Pr[A_N(h)]&\leq \exp\{\delta tN\}2^{\delta tN}\left(\exp\{-201^{-d}c_1tN\}+2^{\delta tN}\exp\left\{\frac{-c\varepsilon_0^2 qN}{8d}\right\}+2^{\delta tN}\exp\left\{-\frac{c_2qN}{8Cd}\right\}\right)\\ &\leq\exp\{-c'N\}
	\end{align*}
	for some constant $c'>0$ depending only on $h'$ and $d$, as desired.
\end{proof}

\subsubsection{Exponential decay in the subcritical regime}\label{subsec:exp_decay_sub}

We now move on to the proof of \Tr{thm:exp_Zd} for $h$ in the subcritical regime. We will implement a strategy similar to the one we used for the subcritical regime. 

First, we need to choose suitably the events $\pazocal{E}^{b}_B$ and $\pazocal{E}^{vb}_B$. Given $h\geq h'>h_*$, $\varepsilon_0=(h'-h_*)/2$ and a box $B\in L\Z^d$, let $\pazocal{E}_B$ be the event that $\{\varphi\geq h\}\cap U$ contains a cluster of diameter at least $L/5$, and let 
$\pazocal{E}^{b}_B\coloneqq \pazocal{E}_B \cap \{\text{$B$ is $(\xi,\varepsilon_0)$-good}\}$ and $\pazocal{E}^{vb}_B\coloneqq \pazocal{E}_B \cap \{\text{$B$ is $(\xi,\varepsilon_0)$-bad}\}$. It is straightforward to see that this family of events is $h$-admissible when $\pazocal{C}_o(h)$ has diameter at least $L$, since then for every box $B\in \pazocal{C}_o(h,L)$, the event $\pazocal{E}_B$ happens. 

Notice that when the event $\pazocal{E}^{b}_B$ happens, $\{\psi^B\geq h-\varepsilon_0\}\cap U$ contains a cluster of diameter at least $L/5$. The latter happens with probability decaying stretched exponentially.

\begin{proposition}\label{prop:bad_sub}
	For every $h'>h_*$, there exist constants $c_3=c_3(h',d)>0$ and $\rho=\rho(d)>0$ such that for every $h\geq h'$ and $L\geq 1$,
	$$\Pr[\{\psi^B\geq h\}\cap U \text{ contains a cluster of diameter at least $L/5$}]\leq e^{-c_3 L^{\rho}}.$$
\end{proposition}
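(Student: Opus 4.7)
The plan is to reduce the event on the local field $\psi^B$ to a corresponding connectivity event for the full field $\varphi$, and then invoke the known stretched exponential decay of connectivities for GFF level-sets in the subcritical regime. The bridge is provided by the independence of $\psi^B$ and $\xi^B$ coming from the Markov decomposition (Section~\ref{subsec:decomp_deviation}), together with the fact that $\xi^B$ is very likely uniformly small on $D$; on this event, $\{\psi^B\geq h\}$ is dominated by $\{\varphi\geq h-\varepsilon_0\}$ inside $U$.

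\textbf{The comparison step.} Set $\varepsilon_0\coloneqq (h'-h_*)/2>0$ and consider the events
\begin{align*}
A &\coloneqq \{\{\psi^B\geq h\}\cap U \text{ contains a cluster of diameter at least } L/5\},\\
G &\coloneqq \{B_L\text{ is }(\xi,\varepsilon_0)\text{-good}\}.
\end{align*}
Since $A$ depends only on $\psi^B$ and $G$ only on $\xi^B$, their independence yields $\Pr[A\cap G]=\Pr[A]\,\Pr[G]$. On $G$ we have $\psi^B_x=\varphi_x-\xi^B_x\leq \varphi_x+\varepsilon_0$ for every $x\in D\supset U$, so $\{\psi^B\geq h\}\cap U\subset \{\varphi\geq h-\varepsilon_0\}\cap U$. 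In particular, $A\cap G$ implies that $\{\varphi\geq h-\varepsilon_0\}\cap U$ contains a cluster of diameter at least $L/5$.

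\textbf{Invoking subcritical decay.} Since $h-\varepsilon_0\geq (h'+h_*)/2>h_*$, a union bound over the $O(L^d)$ possible starting points of such a cluster, combined with the stretched exponential decay of connectivities above $h_*$ (which follows from \cite{RodriguezSznitman13,PopovRath15} together with sharpness \cite{DGRS20}), yields
\[
\Pr[A\cap G]\leq CL^d\, e^{-cL^\rho}
\]
for some $c>0$ and $\rho=\rho(d)>0$ depending only on $h'$ and $d$. By \eqref{eq:single_bad}, $\Pr[G]\geq 1-e^{-c_0\varepsilon_0^2 L^{d-2}}\geq 1/2$ for $L$ large, so dividing produces $\Pr[A]\leq 2CL^d e^{-cL^\rho}$, and absorbing the polynomial prefactor by slightly decreasing $c$ (and adjusting for small $L$) gives the desired bound $\Pr[A]\leq e^{-c_3 L^\rho}$. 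The only non-routine ingredient is the subcritical connectivity decay for the full GFF above $h_*$, which is available off-the-shelf; the remainder is a standard independence and monotonicity argument, so no genuine obstacle is expected.
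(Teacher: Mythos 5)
Your proof is correct and follows essentially the same route as the paper: decompose $\varphi=\psi^B+\xi^B$, use \eqref{eq:single_bad} to control the event that $B$ is $(\xi,\varepsilon_0)$-bad, and on the good event compare $\{\psi^B\geq h\}$ with $\{\varphi\geq h-\varepsilon_0\}$ so that the subcritical stretched exponential decay from \cite{DGRS20} applies. The only (cosmetic) difference is that you exploit the independence of $\psi^B$ and $\xi^B$ to divide by $\Pr[G]$, whereas the paper simply uses the union bound $\Pr[A]\leq\Pr[G^c]+\Pr[A\cap G]$; both work.
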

\begin{proof}
	This is a simple consequence of the (subcritical) sharpness of GFF percolation on $\Z^d$ (i.e.~$h_*=h_{**}$) mentioned in the introduction. Indeed, by the main result of \cite{DGRS20}, for every $h>h_*$, there exist $\rho=\rho(d)\in(0,1)$ and $c=c(d,h)$ such that for every $N \geq 1$, 
	\begin{equation}\label{eq:shapness_sub}
		\Pr[o\xleftrightarrow[]{\varphi\geq h}\partial B_N]\leq e^{-cN^{\rho}}.
	\end{equation} 
Assume that $\{\psi^B\geq h\}\cap D$ contains a cluster of diameter at least $L/5$ and let $\varepsilon_0=(h'-h_*)/2$. Up to a probability decaying exponentially in $L^{d-2}$, $B$ is $(\xi,\varepsilon_0)$-good by \eqref{eq:single_bad}. When this happens, $\{\varphi\geq h-\varepsilon_0\}\cap D$ contains a cluster of diameter at least $L/5$. The latter event has probability decaying stretched exponentially in the subcritical regime by \eqref{eq:shapness_sub}.
\end{proof}

Assuming \Tr{thm:multi-int} and Proposition~\ref{prop:bad_sub}, we are now in position to prove \Tr{thm:exp_Zd} for $h$ in the subcritical regime.

\begin{proof}[Proof of Theorem~\ref{thm:exp_Zd} for $h>h_*$.]
	The proof is similar to that of the case $h<h_*$ presented in Section~\ref{subsec:exp_decay_sup}. Consider some $h\geq h'> h_*$, and let $\rho>0$ be the exponent of Proposition \ref{prop:bad_sub}. Consider also a small enough constant $\delta>0$. We can apply \Tr{thm:multi-int} to obtain a family $\mathcal{I}_N$ satisfying the conclusion of the theorem. Fix $\pazocal{I}\in\mathcal{I}_N$ and a pair of subsets $\pazocal{I}_1,\pazocal{I}_2\subset\pazocal{I}$ as in the proof of the case $h<h_*$. If $\pazocal{B}=\pazocal{I}_2$ happens, then we restrict to a maximal well-separated subset $\pazocal{I}'_2$ of $\pazocal{I}_2$. Arguing as in the previous section and using Proposition~\ref{prop:bad_sub} and independence, we deduce that 
	$$\Pr[\pazocal{I} \text{ occurs with }\pazocal{B}=\pazocal{I}_2]\leq \exp\{-201^{-d}c_3tN\}.$$
	If $\pazocal{VB}=\pazocal{I}_1$ happens, then we restrict to a well-separated subset $\pazocal{I}'_1$ of $\pazocal{I}_1$ defined as in the proof of \Tr{thm:exp_Zd} for which we have $\cp(\Sigma(\pazocal{I}'_1))\geq q\cp(\Sigma(\pazocal{I}_1))\geq qN/4d$. Then we apply \Lr{lem:multi_Szn} to conclude that 
	$$\Pr[\pazocal{I} \text{ occurs with } \pazocal{VB}=\pazocal{I}_1]\leq \exp\left\{-\frac{c_0\varepsilon_0^2 qN}{4d}\right\}.$$
	
	A union bound over all $\pazocal{I}\in\mathcal{I}_N$ and over all possible subsets $\pazocal{I}_1,\pazocal{I}_2$ of $\pazocal{I}$ gives 
	$$\Pr[A_N(h)]\leq 2^{\delta tN} \exp\{\delta tN\}\left(\exp\{-201^{-d}c_3tN\}+\exp\left\{-\frac{c_0\varepsilon_0^2 qN}{4d}\right\}\right)\leq \exp\{-c' N\}$$
	for some constant $c'>0$ depending only on $h'$ and $d$, as desired.
\end{proof}

\section{Multi-scale coarse graining construction}\label{sec:proof_coarse_graining}

We will now proceed with the proof of \Tr{thm:multi-int}. In order to prove the theorem we need to introduce some notation. For every $L\geq 1$,
let $\pazocal{B}(L)$ and $\pazocal{VB}(L)$ be the set of boxes $B\in\pazocal{C}_o(h,L)$ such that $\pazocal{E}^{b}_B$ and $\pazocal{E}^{vb}_B$ happens, respectively. Finally, define $\pazocal{I}(L):=\pazocal{B}(L)\cup\pazocal{VB}(L)$. Notice that by properties (i)-(iii) of Definition~\ref{def:bad_events}, we know that if $\mathrm{diam}(\pazocal{C}_o(h))\geq L$ then $\pazocal{B}(L)\cap\pazocal{VB}(L)=\emptyset$, $\partial\pazocal{C}_o(h,L)\subset \pazocal{I}(L)$ and $\partial^{out} \pazocal{B}(L) \cap \pazocal{C}_o(h,L)\subset \pazocal{VB}(L)$. 

The following lemma will be used in the proof of \Tr{thm:multi-int}. For simplicity, we may henceforth identify any set of boxes $\pazocal{F}$ with its union $\Sigma(\pazocal{F})=\bigcup_{B\in \pazocal{F}} B$.

\begin{lemma}\label{lem:cutset}
	If the event $A_N(h)$ happens and in addition $\mathrm{diam}(\pazocal{C}_o(h))\geq L$, then we have $\cp\left(\pazocal{VB}(L)\cup \big(\partial \pazocal{C}_o(h)\cap \pazocal{B}(L)\big)\right)\geq N/2d$.
\end{lemma}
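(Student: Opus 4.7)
The plan is to apply the sweeping identity \eqref{sweeping} and reduce the claim to a pointwise one-step random walk estimate. Writing $F := \pazocal{VB}(L)\cup(\partial\pazocal{C}_o(h)\cap\pazocal{B}(L))$ and $K := \overline{\pazocal{C}_o(h)}$, the hypothesis $A_N(h)$ gives $\cp(K)\geq N-1$, and by monotonicity of capacity together with \eqref{sweeping} applied to $F\cap K\subset K$ we obtain
\[
\cp(F) \;\geq\; \cp(F\cap K) \;=\; \sum_{x\in K} e_K(x)\,\Pr_x[H_{F\cap K}<\infty].
\]
Since $\sum_x e_K(x)=\cp(K)$, the task reduces to the pointwise lower bound $\Pr_x[H_{F\cap K}<\infty]\geq 1/(2d)$ for every $x$ in the support of $e_K$; this yields $\cp(F)\geq(N-1)/(2d)$, from which the stated bound follows up to a harmless $N-1$ versus $N$ adjustment.

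Any such $x$ lies in the inner boundary of $K$. Since every neighbour of a cluster vertex already lies in $\overline{\pazocal{C}_o(h)}=K$, in fact $x\in\partial^{out}\pazocal{C}_o(h)$, so $x$ admits at least one cluster neighbour $y\in\partial\pazocal{C}_o(h)\subset K$. The one-step bound $\Pr_x[H_{F\cap K}<\infty]\geq\Pr_x[X_1=y]=1/(2d)$ closes the argument provided $y\in F$. It therefore suffices to exhibit a cluster neighbour $y$ of $x$ whose containing $L$-box $B_y$ belongs to $\pazocal{I}(L)=\pazocal{B}(L)\cup\pazocal{VB}(L)$: indeed $B_y\in\pazocal{VB}(L)$ gives $y\in\pazocal{VB}(L)\subset F$, while $B_y\in\pazocal{B}(L)$ combined with $y\in\partial\pazocal{C}_o(h)$ gives $y\in\partial\pazocal{C}_o(h)\cap\pazocal{B}(L)\subset F$.

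In most configurations the admissibility properties yield such a $y$ directly. If $x$'s box $B_x$ does not lie in $\pazocal{C}_o(h,L)$, then $B_y$ is an $L\Z^d$-neighbour of $B_x$ and hence belongs to $\partial\pazocal{C}_o(h,L)\subset\pazocal{I}(L)$ by the initiation property (ii), applicable thanks to $\mathrm{diam}(\pazocal{C}_o(h))\geq L$. If $B_x\in\pazocal{VB}(L)$ then $x$ itself already sits in $F$. If $B_x\in\pazocal{B}(L)$ then a cluster neighbour of $x$ either lies in $B_x$ (producing $y\in\partial\pazocal{C}_o(h)\cap\pazocal{B}(L)\subset F$) or in a $\pazocal{C}_o(h,L)$-neighbour of $B_x$, which is then forced into $\pazocal{I}(L)$ by the propagation property (iii).

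The delicate case is when $B_x$ sits in the interior $\mathcal{J}:=\pazocal{C}_o(h,L)\setminus\pazocal{I}(L)$. The contrapositive of (iii) constrains every $\pazocal{C}_o(h,L)$-neighbour of $B_x$ to belong to $\pazocal{VB}(L)\cup\mathcal{J}$, while $e_K(x)>0$ forces $x$ to have a neighbour $z\notin\overline{\pazocal{C}_o(h)}$; such a $z$ has no cluster neighbour and lies in a box $B_z\in\pazocal{C}_o(h,L)$ (else $B_x\in\partial\pazocal{C}_o(h,L)\subset\pazocal{I}(L)$, a contradiction). Combining these constraints with a careful local geometric inspection around $x$—using that $x$ simultaneously borders the cluster (through $y$) and its complement in $\overline{\pazocal{C}_o(h)}$ (through $z$), and that $B_x$ is strictly interior in $\pazocal{C}_o(h,L)$—rules out the possibility that all cluster neighbours of $x$ lie in $\mathcal{J}$-boxes, and produces one cluster neighbour in a $\pazocal{VB}(L)$-box. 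This final geometric step, propagating admissibility from the non-closure neighbour $z$ back through the interior region to a cluster neighbour of $x$, is the main obstacle of the proof.
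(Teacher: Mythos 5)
Your reduction via the sweeping identity hinges on the pointwise claim that every $x$ charged by $e_{\overline{\pazocal{C}_o(h)}}$ has a neighbour in $F=\pazocal{VB}(L)\cup\big(\partial\pazocal{C}_o(h)\cap\pazocal{B}(L)\big)$, and this claim is false in general; the ``delicate case'' you defer to a ``careful local geometric inspection'' cannot be closed. If $B_x$ lies in $\mathcal{J}=\pazocal{C}_o(h,L)\setminus\pazocal{I}(L)$, the contrapositive of the propagation property only tells you that every $\pazocal{C}_o(h,L)$-neighbour of $B_x$ lies in $\pazocal{VB}(L)\cup\mathcal{J}$ — it does not produce a $\pazocal{VB}(L)$-box. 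Nothing in the admissibility axioms prevents a large connected region of $\mathcal{J}$-boxes deep inside $\pazocal{C}_o(h,L)$ whose interior nevertheless contains holes of the cluster, i.e.\ vertices of $\partial^{out}\pazocal{C}_o(h)$ (charged by the equilibrium measure of $\overline{\pazocal{C}_o(h)}$) all of whose cluster neighbours sit in $\mathcal{J}$-boxes. This is exactly the situation the paper warns about in the remark following the lemma: $\partial\pazocal{C}_o(h)$ is \emph{not} contained in $\pazocal{I}(L)$ in general (see Figure~\ref{fig-int}). For such an $x$ the one-step bound $\Pr_x[H_{F}<\infty]\geq 1/2d$ has no local justification, so the argument breaks precisely where all the work is.

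The fix is to abandon the local viewpoint: the correct statement is the global one that $F$ is a \emph{separating set} of $\pazocal{C}_o(h)$, i.e.\ every infinite path started in the cluster must eventually hit $F$, even though $F$ need not be adjacent to the starting point. This is what the paper proves, by partitioning $\pazocal{C}_o(h)$ according to whether the ambient box is in $\pazocal{VB}(L)$, in $\pazocal{B}(L)$, or in $\mathcal{J}$, and showing (via property (iii)) that $\partial^{out}\pazocal{B}(L)\cap\pazocal{C}_o(h,L)\subset\pazocal{VB}(L)$ and that the outer boundary of every component of $\mathcal{J}$ lies in $\pazocal{VB}(L)$; a path started in a $\mathcal{J}$-hole may travel a long way before crossing $F$, but cross it must. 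The separation property yields $\cp(F)\geq\cp(\pazocal{C}_o(h))$ by standard potential theory, and the factor $1/2d$ enters only afterwards, in comparing $\cp(\pazocal{C}_o(h))$ with $\cp(\overline{\pazocal{C}_o(h)})\geq N-1$ via the one-step argument you had in mind — applied to $\pazocal{C}_o(h)$ versus its closure, where it is valid, rather than to $F$ versus the closure, where it is not.
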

\begin{remark}
	In general, $\partial \pazocal{C}_o(h)$ is not contained entirely in $\pazocal{I}(L)$. See Figure~\ref{fig-int}.
\end{remark}
\begin{proof}
	We will show that $X\coloneqq \pazocal{VB}(L)\cup \big(\partial \pazocal{C}_o(h)\cap \pazocal{B}(L)\big)$ is a \emph{separating set} of $\pazocal{C}_o(h)$, namely that for every $x\in \pazocal{C}_o(h)$, any infinite path starting from $x$ must visit eventually $X$. 
	
	We first partition $\pazocal{C}_o(h)$ into $\pazocal{C}_o(h)\cap \pazocal{VB}(L)$, $\pazocal{C}_o(h)\cap \pazocal{B}(L)$ and $\pazocal{C}_o(h)\setminus \pazocal{I}(L)$. It is clear that $X$ is a separating set of $\pazocal{C}_o(h)\cap \pazocal{VB}(L)$. Let us show that $X$ is also a separating set of $\pazocal{C}_o(h)\cap \pazocal{B}(L)$. Indeed, each box of $\partial^{out} \pazocal{B}(L)$ lies either in $L\Z^d\setminus \pazocal{C}_o(h,L)$ or in $\pazocal{C}_o(h,L)$, and in the latter case, it must lie in $\pazocal{VB}(L)$ by property (iii) of Definition~\ref{def:bad_events}. With this observation in mind, consider an infinite path $\gamma$ starting from some vertex in $\pazocal{C}_o(h)\cap \pazocal{B}(L)$. If $\gamma$ eventually visits $\pazocal{VB}(L)$, then there is nothing to show. If $\gamma$ does not visit $\pazocal{VB}(L)$, then  consider the subpath $\gamma'$ of $\gamma$ up to the first vertex $u\in \partial \pazocal{C}_o(h)$ that $\gamma$ visits. Then $\gamma'$ visits only vertices in $\pazocal{C}_o(h,L)$ and by our assumption, it does not visit any vertices in $\partial^{out} \pazocal{B}(L)$ because $\partial^{out} \pazocal{B}(L)\cap \pazocal{C}_o(h,L)\subset \pazocal{VB}(L)$. Thus all vertices of $\gamma'$ lie in $\pazocal{B}(L)$. In particular, this holds for $u$, hence $u\in\pazocal{B}(L)\cap \partial \pazocal{C}_o(h)\subset X$.
	
	It remains to consider $\pazocal{C}_o(h)\setminus \pazocal{I}(L)$.
	First, notice that for every component $S$ of $\pazocal{C}_o(h,L)\setminus \pazocal{I}(L)$, we have $\partial^{out} S \subset \pazocal{I}(L)$ because $\partial \pazocal{C}_o(h,L)\subset \pazocal{I}(L)$. Moreover, $\partial^{out} S\cap \pazocal{B}(L)=\emptyset$ because otherwise some box of $S$ would belong to $\pazocal{I}(L)$ by property (iii) of Definition~\ref{def:bad_events}. Thus $\partial^{out} S\subset \pazocal{VB}(L)$, which implies that $\pazocal{VB}(L)$ is a separating set of $\pazocal{C}_o(h,L)\setminus \pazocal{I}(L)$, hence it is a separating set of $\pazocal{C}_o(h)\setminus \pazocal{I}(L)$, as desired. 
	
	We can now easily deduce that $\cp(X)\geq \cp(\pazocal{C}_o(h))$. Notice that $$\cp(\pazocal{C}_o(h))\geq \cp(\partial^{out} \pazocal{C}_o(h))/2d\geq N/2d$$ because when we start a simple random walk from some $x\in \partial \pazocal{C}_o(h)$, one way to never visit $\pazocal{C}_o(h)$ again is to first visit a given neighbour $y\in \partial^{out} \pazocal{C}_o(h)$ and from there to never visit $\pazocal{C}_o(h)\cup \partial^{out} \pazocal{C}_o(h)$ again.
\end{proof}

\begin{figure}
	\centering
	\includegraphics[width=0.5\linewidth]{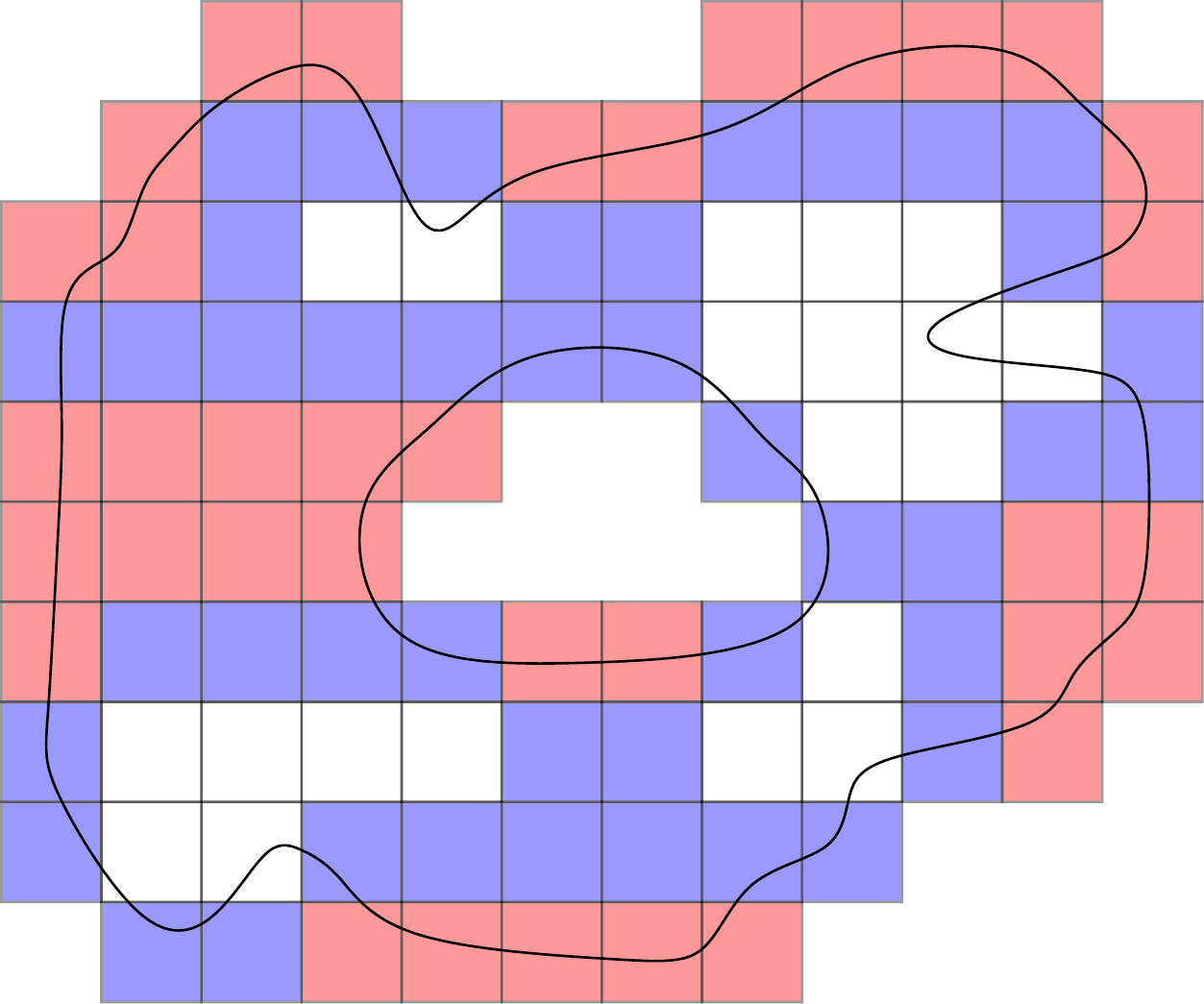}
	\caption{An illustration of an interface $\pazocal{I}(L)$. Each box of $\pazocal{C}_o(h,L)$ is depicted by a square. Red boxes belong to $\pazocal{B}(L)$, blue boxes belong to $\pazocal{VB}(L)$ and uncoloured boxes belong to $\pazocal{C}_o(h,L)\setminus \pazocal{I}(L)$. The two curves depict $\partial \pazocal{C}_o(h)$.}\label{fig-int}
\end{figure}

We are now ready to prove \Tr{thm:multi-int}.

\begin{proof}[Proof of \Tr{thm:multi-int}]
	Our aim is to construct an occurring multi-scale interface $\pazocal{I}$ for every configuration on the event $A_N(h)$. We will construct $\pazocal{I}$ by starting from $\pazocal{I}(2^k)$ for a certain choice of $2^k$ and then adding boxes of smaller and smaller scales. We will divide the definition of $\pazocal{I}$ into segments. At each step of the first segment we will add at most $\dfrac{N}{f(N)}$ boxes, at each step of the second segment we will add at most $\dfrac{N}{f(f(N))}$ boxes, and so on, where $f(N)=\log^b(N)$, $b=3(d-2)/\rho$. The process will stop once we reach a scale of size roughly $L$ or if it happens that \ref{item:c1} or \ref{item:c1} is satisfied before we reach that scale. 
	
	It suffices to prove the theorem for $\rho\leq 1$. Consider an integer $L\geq 1$ and let $N\geq N_0$, where $N_0$ is a large enough constant that will be determined along the way. Assume that the event $A_N(h)$ happens and let $$k_{1,1}\coloneqq \max\left\{0\leq k\leq \log_2(\mathrm{diam}(\pazocal{C}_o(h))): |\pazocal{I}_{1,1}(2^k)|\geq \dfrac{N}{f(N)}\right\},$$
	where $\pazocal{I}_{1,1}(2^k)\coloneqq \pazocal{I}(2^k)$. Notice that $k_{1,1}$ is well-defined, since $|\pazocal{I}_{1,1}(1)|\geq|\partial \pazocal{C}_o(h)|\geq \cp(\pazocal{C}_o(h))\geq N/2d$, provided that $N_0$ is large enough so that $f(N_0)\geq 2d$. By further increasing the value of $N_0$, we can assume that $2^{k_{1,1}}\leq r\coloneqq \frac{1}{2}\mathrm{diam}(\pazocal{C}_o(h))$ because $|\pazocal{I}_{1,1}(r)|\leq 4^d$. By definition, $$|\pazocal{I}_{1,1}(L_{1,1})|< \dfrac{N}{f(N)},$$ where $L_{1,1}\coloneqq 2^{k_{1,1}+1}\leq \mathrm{diam}(\pazocal{C}_o(h))$. On the other hand, the number of boxes of $L_{1,1}\Z^d$ that contain a box of $\pazocal{I}_{1,1}(2^{k_{1,1}})$ is at least $$\dfrac{|\pazocal{I}_{1,1}(2^{k_{1,1}})|}{2^d}\geq \dfrac{N}{2^d f(N)}.$$ Thus we can add enough boxes of $\pazocal{I}_{1,1}(2^{k_{1,1}})$ to $\pazocal{I}_{1,1}(L_{1,1})$ to obtain an interface $\pazocal{I}'_{1,1}$ such that $$\dfrac{N}{2^d f(N)}\leq |\pazocal{I}'_{1,1}|<\dfrac{N}{f(N)}.$$
	
	We then naturally define $\pazocal{VB}'_{1,1}:=\big(\pazocal{VB}(L_{1,1})\cup \pazocal{VB}(2^{k_{1,1}})\big)\cap \pazocal{I}'_{1,1}$ and $\pazocal{B}'_{1,1}:=\big(\pazocal{B}(L_{1,1})\cup \pazocal{B}(2^{k_{1,1}})\big)\cap \pazocal{I}'_{1,1}$. For the set $\pazocal{VB}'_{1,1}$ we have
	\begin{equation*}
		\text{either} \quad \cp(\pazocal{VB}'_{1,1})\geq N/4d \quad \text{or} \quad  \cp(\pazocal{VB}'_{1,1})< N/4d.
	\end{equation*} 
	In the first case, the process stops because \ref{item:c1} is satisfied, and we let $\pazocal{I}=\pazocal{VB}'_{1,1}$. In the second case, we would like to check whether \ref{item:c2} is satisfied. 
	For that purpose, we consider two cases according to whether 
	\begin{equation*}
		|\pazocal{B}'_{1,1}|\geq \dfrac{N}{2^df(N)}
		\quad \text{or} \quad |\pazocal{B}'_{1,1}|< \dfrac{N}{2^df(N)}.
	\end{equation*}
	In the first case, we stop the first segment of our process. In the second case, we move on to the second step of the first segment. We remark that along the way of the second and every subsequent step, we will define some integers $k_{i,j}, L_{i,j}$ and some collections $\pazocal{I}'_{i,j}$ of $2^{k_{i,j}}$-boxes and $L_{i,j}$-boxes, where $L_{i,j}=2^{k_{i,j}+1}$. To avoid repetition, let us mention that we will use the notation $\pazocal{VB}'_{i,j}:=(\pazocal{VB}(L_{i,j})\cup \pazocal{VB}(2^{k_{i,j}}))\cap \pazocal{I}'_{i,j}$ and $\pazocal{B}'_{i,j}:=(\pazocal{B}(L_{i,j})\cup \pazocal{B}(2^{k_{i,j}}))\cap \pazocal{I}'_{i,j}$.

	For the second step, we will require $N$ to be large enough so that $f(N)\geq 4d$. Now let $$k_{1,2}=\max\{k\geq 0 : |\pazocal{I}_{1,2}(2^k)|\geq \dfrac{N}{f(N)}\},$$
	where $\pazocal{I}_{1,2}(2^k)$ is the set of boxes of $\pazocal{I}(2^k)$ that lie in some box of $\pazocal{B}'_{1,1}$. To see that $k_{1,2}$ is well-defined, notice first that 
	\begin{equation}\label{cap lower}
		\cp\big(\partial \pazocal{C}_o(h)\cap \pazocal{B}'_{1,1}\big)>N/4d
	\end{equation}
	Indeed, as $\pazocal{VB}(L_{1,1})\subset \pazocal{VB}'_{1,1}$, we obtain that $\cp\left(\pazocal{VB}'_{1,1}\cup \big(\partial \pazocal{C}_o(h)\cap \pazocal{B}'_{1,1}\big)\right)\geq N/2d$ by \Lr{lem:cutset}. The sub-additivity of capacity gives
	$$\cp\left(\pazocal{VB}'_{1,1}\cup \big(\partial \pazocal{C}_o(h)\cap \pazocal{B}'_{1,1}\big)\right)\leq \cp(\pazocal{VB}'_{1,1})+ \cp\big(\partial \pazocal{C}_o(h)\cap \pazocal{B}'_{1,1}\big).$$  Inequality \eqref{cap lower} follows now from our assumption that $\cp(\pazocal{VB}'_{1,1})< N/4d$.
	Hence $\pazocal{I}_{1,2}(1)$, which contains $\partial \pazocal{C}_o(h)\cap \pazocal{B}'_{1,1}$, has size at least $N/4d$. Moreover, by our assumption that $|\pazocal{B}'_{1,1}|< \dfrac{N}{2^df(N)}$, we obtain that $k_{1,2}<k_{1,1}$. This proves that $k_{1,2}$ is well-defined.
	
	Let now $L_{1,2}\coloneqq 2^{k_{1,2}+1}$. Arguing as in the first step, we obtain an interface 
	$\pazocal{I}'_{1,2}$ such that 
	$$\dfrac{N}{2^d f(N)}\leq |\pazocal{I}'_{1,2}|< \dfrac{N}{f(N)},$$ that is obtained from $\pazocal{I}_{1,2}(L_{1,2})$ by adding enough $2^{k_{1,2}}$-boxes of $\pazocal{I}_{1,2}(2^{k_{1,2}})$ that are disjoint from the boxes of $\pazocal{I}_{1,2}(L_{1,2})$. At this point, we take cases according to whether
	\begin{equation*}
		\cp\Big(\bigcup_{j=1}^2 \pazocal{VB}'_{1,i}\Big)\geq N/4d \quad \text{or} \quad  \cp\Big(\bigcup_{j=1}^2 \pazocal{VB}'_{1,i}\Big)< N/4d.
	\end{equation*} 
	As before, if the first case happens, the process stops and we define $\pazocal{I}=\bigcup_{j=1}^2 \pazocal{VB}'_{1,i}$, while if the second case happens, then we check whether 
	\begin{equation*}
		|\pazocal{B}'_{1,2}|\geq \dfrac{N}{2^df(N)} 
		\quad \text{or} \quad |\pazocal{B}'_{1,2}|< \dfrac{N}{2^df(N)}.
	\end{equation*} 
	Similarly, if the first case happens, we end the first segment. If the second case happens, then we continue to the third step. At this point, we need a generalisation of \Lr{lem:cutset} which will ensure that $|\pazocal{I}_{1,3}(1)|\geq N/4d$ and more generally that $|\pazocal{I}_{i,j}(1)|\geq N/4d$ for the subsequent steps. This is proved in \Lr{lem:multi-cutset}.
	
	Continuing in this manner, we obtain a sequence of interfaces $(\pazocal{I}'_{1,j})_{j\geq1}$, where $\pazocal{I}'_{1,j}$ is contained in $\pazocal{B}'_{1,j-1}$. We claim that eventually for some integer $j_1\geq 1$, 
	\begin{equation*}
		\cp\Big(\bigcup_{i=1}^{j_1} \pazocal{VB}'_{1,i}\Big)\geq N/4d
		\quad \text{or} \quad 
		\dfrac{N}{2^df(N)}\leq |\pazocal{B}'_{1,j_1}|<\dfrac{N}{f(N)}.
	\end{equation*}
	Indeed, if the first inequality does not hold, then by \Lr{lem:multi-cutset} and the sub-additivity of the capacity we have $\cp(\pazocal{I}'_{1,j})>N/4d$. On the other hand, by \eqref{ball cap} and the sub-additivity of capacity again, we have $$\cp(\pazocal{I}'_{1,j})\leq C|\pazocal{I}'_{1,j}|L^{d-2}_{1,j}\leq \dfrac{CNL^{d-2}_{1,j}}{f(N)}.$$
	We thus conclude that 
	\begin{equation}\label{eq:lower bound}
		L_{1,j}\geq \Big(\dfrac{f(N)}{4dC}\Big)^{\frac{1}{d-2}}.
	\end{equation}
	However, it follows from the definitions that $(L_{1,j})_{j\geq 1}$ is a strictly decreasing sequence, and so \eqref{eq:lower bound} cannot hold for arbitrary large $j$. 
	
	We end the first segment as soon as we reach a step $j_1$ as above. We shall now decide whether we start the second segment or not. If it happens that 
	\begin{equation}\label{eq:or}
		\cp\Big(\bigcup_{j=1}^{j_1} \pazocal{VB}'_{1,j}\Big)\geq n/4d \quad \text{or} \quad  \dfrac{N}{2^df(N)}\leq |\pazocal{B}'_{1,j_1}|<\dfrac{N}{f(N)} \; \text{and} \; L_{1,j_1}\geq f(N)^{\frac{1}{\rho}},
	\end{equation}
	then our process stops. In the first case, we simply set $\pazocal{I}=\bigcup_{j=1}^{j_1} \pazocal{VB}'_{1,j}$.
	In the second case though, we set $\pazocal{I}=\pazocal{B}(L_{1,j_1})\cap \pazocal{I}'_{1,j_1}$ if $|\pazocal{B}(L_{1,j_1})\cap \pazocal{I}'_{1,j_1}|\geq |\pazocal{B}(2^{k_{1,j_1}})\cap \pazocal{I}'_{1,j_1}|$ and $\pazocal{I}=\pazocal{B}(2^{k_{1,j_1}})\cap \pazocal{I}'_{1,j_1}$ otherwise. In other words, $\pazocal{I}$ contains only one of the sets $\pazocal{B}(L_{1,j_1})\cap \pazocal{I}'_{1,j_1}$ and $\pazocal{B}(2^{k_{1,j_1}})\cap \pazocal{I}'_{1,j_1}$, namely that of larger size.
	If \eqref{eq:or} is not satisfied, then we move on to the second segment.  
	
	Arguing in a similar manner, we obtain a sequence of occurring interfaces $(\pazocal{I}'_{2,j})_{j\geq1}$
	such that $|\pazocal{I}'_{2,j}|<N/f(f(N))$ for all $j\geq 1$, where each $\pazocal{I}'_{2,j}$ lies in $\pazocal{I}'_{1,j_1}$. The segment ends when we reach a certain step $j_2$ such that either 
	\begin{equation*} 
		\cp\Big(\bigcup_{i=1}^2 \bigcup_{j=1}^{j_i} \pazocal{VB}'_{i,j}\Big)\geq N/4d \quad \text{or} \quad 
		\dfrac{N}{2^d f(f(N))}\leq |\pazocal{B}'_{2,j_2}|<\dfrac{N}{f(f(N))}.
	\end{equation*}
	The process stops at the end of the second segment if 
	\begin{equation*}
		\cp\Big(\bigcup_{i=1}^2 \bigcup_{j=1}^{j_i} \pazocal{VB}'_{i,j}\Big)\geq N/4d \quad \text{or} \quad \dfrac{N}{2^df(f(N))}\leq |\pazocal{B}'_{2,j_2}|<\dfrac{N}{f(f(N))} \; \text{and} \; L_{2,j_2}\geq f(f(N))^{\frac{1}{\rho}}.
	\end{equation*}
	In that case, we set $\pazocal{I}=\bigcup_{i=1}^2\bigcup_{j=1}^{j_i} \pazocal{VB}'_{i,j}$, $\pazocal{I}=\pazocal{B}(L_{2,j_2})\cap \pazocal{I}'_{2,j_2}$ or $\pazocal{I}=\pazocal{B}(2^{k_{2,j_2}})\cap \pazocal{I}'_{2,j_2}$, as appropriate.
	
	Proceeding inductively, we define sequences of occurring interfaces 
	$(\pazocal{I}'_{1,j})_{j=1}^{j_1},(\pazocal{I}'_{2,j})_{j=1}^{j_2},\ldots$ such that $|\pazocal{I}'_{i,j}|<N/f^{\circ i}(N)$ for all $i$ and $j$, where $f^{\circ i}$ denotes the $i$-fold composition of $f$. At the end of an arbitrary $k$th segment, we either have $\cp\Big(\bigcup_{i=1}^k \bigcup_{j=1}^{j_i} \pazocal{VB}'_{i,j}\Big)\geq N/4d$ or
	$\dfrac{N}{2^df^{\circ k}(N)}\leq |\pazocal{B}'_{i,j_i}|<\dfrac{N}{f^{\circ k}(N)}$. Let $m=m(N,L)$ be the largest integer such that $f^{\circ m}(N)> M\coloneqq d2^d CL^{d-2}$. Notice that $m$ is well-defined for every $N$ such that $f(N)> M$. If the desired conditions are not satisfied at the end of the $i$th segment for every $i\leq m$, we move on to the $(m+1)$th segment. This segment plays a special role, as we are defining each $\pazocal{I}'_{m+1,j}$ in such a way that $$\dfrac{N}{2^d M}\leq |\pazocal{I}'_{m+1,j}|< \dfrac{N}{M}.$$ At the end of the $(m+1)$th segment we have $\cp\Big(\bigcup_{i=1}^{m+1} \bigcup_{j=1}^{j_i} \pazocal{VB}'_{i,j}\Big)\geq N/4d$ or
	$\dfrac{N}{2^dM}\leq |\pazocal{B}'_{m+1,j_{m+1}}|<\dfrac{N}{M}$. 
	Finally, we set $\pazocal{I}=\bigcup_{i=1}^{m+1}\bigcup_{j=1}^{j_i} \pazocal{VB}'_{i,j}$, $\pazocal{I}=\pazocal{B}(L_{m+1,j_{m+1}})\cap \pazocal{I}'_{m+1,j_{m+1}}$ or $\pazocal{I}=\pazocal{B}(2^{k_{m+1,j_{m+1}}})\cap \pazocal{I}'_{m+1,j_{m+1}}$, as appropriate.
	
	It is not hard to see that if \ref{item:c1} is not satisfied, then \ref{item:c2} is satisfied for 
	\begin{equation}\label{eq:t}
		t=\frac{L^{\rho}}{2^{d+1} M}.
	\end{equation}
	Indeed, if the process stops at the end of the $i$th segment for some $i\leq m$, then the smallest scale of $\pazocal{I}$ is at least $\frac{1}{2}L_{i,j_i}\geq \frac{1}{2} f^{\circ i}(N)^{\frac{1}{\rho}}$ and $|\pazocal{B}|\geq \dfrac{|\pazocal{B}'_{i,j_i}|}{2}\geq \dfrac{N}{2^{d+1} f^{\circ i}(N)}$ (here we use the notation introduced above the statement of \Tr{thm:multi-int}). Thus $|\pazocal{B}|L^{\rho}_k \geq 2^{-\rho-d-1}N$. On the other hand, if the process stops at the end of the $(m+1)$th segment, then we can argue as in the proof of \eqref{eq:lower bound} to deduce that $$L_{m+1,j_{m+1}}\geq \Big(\dfrac{M}{4dC}\Big)^{\frac{1}{d-2}}=2L.$$ Thus the smallest scale of $\pazocal{I}$ is at least $\frac{1}{2}L_{m+1,j_{m+1}}\geq L$, which implies that $|\pazocal{B}|L^{\rho}_k \geq t N$. Since $t\leq 2^{-\rho-d-1}$, the desired assertion follows.
	
	The above construction gives us a family of interfaces $\mathcal{I}_N$ satisfying all the properties claimed in Theorem~\ref{thm:multi-int}. The only properties that do not follow immediately from the construction are that $|\mathcal{I}_N|\leq e^{\delta tN}$ and that  $|\pazocal{I}|\leq \delta t N$ for every $\pazocal{I}\in \mathcal{I}_N$. In order to prove these inequalities, we will treat each segment separately. We start with the first segment. To determine $\pazocal{I}'_{1,j}, j=1,2,\ldots,j_1$, we need to first determine the sequence $(L_{1,j})_{j=1}^{j_1}$. Recall that by construction we have $L_{1,1}\leq \mathrm{diam}(\pazocal{C}_o(h))$. As we mentioned above Corollary~\ref{cor:vol_decay}, a cluster of capacity at most $N$ has volume (and therefore diameter) at most $C_1N^{\frac{d}{d-2}}\leq C_1N^3$, thus $L_{1,1}\leq \mathrm{diam}(\pazocal{C}_o(h))\leq C_1N^3$. Therefore, $(L_{1,j})_{j=1}^{j_1}$ is simply a strictly decreasing sequence of powers of $2$ with exponents at most $\log_2(C_1N^3)$, which in turn implies that there are at most $2^{\log_2(C_1N^3)}=C_1N^3$ possibilities for $(L_{1,j})_{j=1}^{j_1}$. 
	Once the scales $(L_{1,j})_{j=1}^{j_1}$ are fixed, we should bound the possibilities for $\pazocal{I}'_{1,j}, 1\leq j\leq j_1$. Notice that for all $j=1,2,\ldots,j_1$, each box of $\pazocal{I}'_{1,j}$ is at distance at most $C_1N^3$ from the origin and furthermore $|\pazocal{I}'_{1,j}|\leq N_1:=\lfloor N/f(N) \rfloor$. Hence, for each $1\leq j\leq j_1$, the number possibilities for $\pazocal{I}'_{1,j}$ given $L_{i,j}$ is at most
	$$\sum_{k=1}^{N_1}{N'_1\choose{k}},$$
	where $N'_1=C_2N^{3d}$. Using the inequality ${{n}\choose{k}}\leq \left(\tfrac{n}{k}\right)^k e^k$ and the monotonicity of the combinatorial coefficient ${{n}\choose{k}}$ for $k\leq n/2$ we obtain that $$\sum_{k=1}^{N_1}{N'_1\choose{k}}\leq N_1 \left(\dfrac{N'_1}{N_1}\right)^{N_1} e^{N_1}\leq \exp\left\{3N_1\log\left(\frac{N'_1}{N_1}\right)\right\}.$$ Overall, there at most 
	\begin{equation}\label{eq:bound_seg1}
	C_1N^3\left(\sum_{k=1}^{N_1}{N'_1\choose{k}}\right)^{\log_2(C_1N^3)}\leq \exp\left\{C_3\dfrac{N\log^2(N)}{f(N)}\right\}
	\end{equation}
	possibilities for the first segment. By increasing $C_3$ of necessary, the term inside the exponential in \eqref{eq:bound_seg1} is also an upper bound for the number of boxes of the first segment contained in $\pazocal{I}$.
	
	Moving on to the second segment, first notice that all scales $(L_{2,j})_{j=1}^{j_2}$ are powers of $2$ smaller than $f(N)^{\frac{1}{\rho}}$ (recall that \eqref{eq:or} does not hold). Therefore, there are at most $f(N)^{\frac{1}{\rho}}$ possibilities for $(L_{2,j})_{j=1}^{j_2}$.
	Since $L_{1,j_1}< f(N)^{\frac{1}{\rho}}$ and every box of the second segment is contained in $\pazocal{I}'_{1,j_1}$, which in turn contains at most $N_1$ boxes, we deduce that for every $j=1,2,\ldots,j_2$, $\pazocal{I}'_{2,j}$ contains at most $N_1 f(N)^{\frac{d}{\rho}}\leq C_4 N\lfloor f(N) \rfloor ^{\frac{d}{\rho}-1}=:N'_2$ boxes. Hence for each $1\leq j\leq j_2$ the number of possibilities for $\pazocal{I}'_{2,j}$ given $L_{2,j}$ is at most 
	$$\sum_{k=0}^{N_2}{{N'_2}\choose{k}}\leq \exp\left\{3N_2\log\left(\frac{N'_2}{N_2}\right)\right\},$$ 
	where $N_2=\lfloor N/f(f(N)) \rfloor$. Overall, there are at most 
	$$f(N)^{\frac{1}{\rho}}\left(\sum_{k=0}^{N_2}{N'_2\choose{k}}\right)^{\log_2\big(f(N)^{\frac{1}{\rho}}\big)}\leq  \exp\left\{C_3\dfrac{N\log^2(f(N))}{f(f(N))}\right\}$$ possibilities for the second segment, where for the last inequality, we increase the value of $C_3$ if necessary. 
	
	Setting $g_0(N):=N$, $g_i(N):=f^{\circ i}(N)$ for $1\leq i\leq m$ and $g_{m+1}(N):=M$ (recall that $M=d2^d CL^{d-2}$), we see that for the boxes of an arbitrary $i$th segment, there are at most
	$$\exp\left\{C_3\dfrac{N\log^2(g_{i-1}(N))}{g_i(N)}\right\}$$ possibilities. 
	Overall, we deduce that
	\begin{equation}\label{eq:bound_seg_all}
		|\mathcal{I}_N|\leq\exp\left \{C_3 N \sum_{i=1}^{m+1}\dfrac{\log^2(g_{i-1}(N))}{g_i(N)} \right\}.
	\end{equation}
	Furthermore, the term inside the exponential in \eqref{eq:bound_seg_all} is an upper bound for $|\pazocal{I}|$.
	
	Therefore, it remains to prove that 
	\begin{equation}\label{eq:delta_t}
		C_3\sum_{i=1}^{m+1}\dfrac{\log^2(g_{i-1}(N))}{g_i(N)}\leq \delta t,
	\end{equation}
	provided that $L$ and $N$ are large enough (recall from \eqref{eq:t} that $t=\tfrac{L^{\rho}}{2^{d+1} M}$). 
	We start by bounding the $(m+1)$th term. By the definition of $m$, we have $f^{\circ(m+1)}\leq M$, which implies $g_m(N)=f^{\circ m}(N)\leq e^{M^{\frac{1}{b}}}$. Therefore,
	\begin{equation}\label{eq:m+1}
		\dfrac{\log^2(g_{m}(N))}{g_{m+1}(N)}\leq M^{-1+\frac{2}{b}}
	\end{equation}
	Now, let us handle the sum up to the $m$th term. First notice that for all $i\leq m$, $\tfrac{\log^2(g_{i-1}(N))}{g_i(N)}=\log^{2-b}(g_{i-1}(N))$. 
	Now, recall that $b=3(d-2)/\rho>2$ and observe that
	$\log^{2-b}(x)\leq 2^{-1}\log^{2-b}(f(x))$ for all $x\geq C_5$. Since $g_{i-1}(N)\geq g^m(N)\geq C_5$ for all $L$ and $N$ that are large enough, one readily deduces 
	\begin{equation*}
		\log^{2-b}(g_{i-1}(N))\leq 2^{-1} \log^{2-b}(g_i(N)).
	\end{equation*}
	Iterating the last inequality, we obtain that
	$\log^{2-b}(g_{i-1}(N))\leq 2^{i-m} \log^{2-b}\left(g^{m-1}(N)\right)$, which in turn implies
	\begin{equation}\label{eq:1_to_m}
		\sum_{i=1}^m \dfrac{\log^2(g_{i-1}(N))}{g_i(N)}\leq 2\log^{2-b}\left(g^{m-1}(N)\right).
	\end{equation}
	By the definition of $m$ we know that $f^{\circ m}(N)\geq M$, which implies $g_{m-1}=f^{\circ (m-1)}(N)\geq e^{M^{\frac{1}{b}}}$. Plugging this in \eqref{eq:1_to_m} gives
	\begin{equation}\label{eq:1_to_m*}
		\sum_{i=1}^m \dfrac{\log^2(g_{i-1}(N))}{g_i(N)}\leq 2M^{-1+\frac{2}{b}}.
	\end{equation}
	Combining \eqref{eq:m+1} and \eqref{eq:1_to_m*}, we deduce that
	\begin{equation*}\label{eq:1_to_m+1}
		C_3\sum_{i=1}^{m+1}\dfrac{\log^2(g_{i-1}(N))}{g_i(N)}\leq 3C_3 M^{-1+\frac{2}{b}}.
	\end{equation*}
	Recalling the definitions of $M$ and $t$, we see that $t=C_6 M^{-1+\frac{\rho}{d-2}}$. Since by definition $b=3(d-2)/\rho$, the desired inequality \eqref{eq:delta_t} follows readily as long as $\delta\geq \tfrac{3C_3}{C_6}M^{-\frac{\rho}{3(d-2)}}$, which can be guaranteed by making $L$ sufficiently large. This completes the proof.
\end{proof}

For $1\leq i\leq m+1$ and $1\leq j\leq j_i$, let $$\overline{\pazocal{VB}}_{i,j}=\left(\bigcup_{k=1}^{i-1} \bigcup_{l=1}^{j_k}{\pazocal{VB}'}_{k,l}\right) \cup \left(\bigcup_{l=1}^{j}{\pazocal{VB}'}_{i,l}\right).$$ 
We now prove the lemma mentioned in the proof of the above theorem. We recall that for convenience we identify sets of boxes with the corresponding subsets of $\Z^d$.

\begin{lemma}\label{lem:multi-cutset}
	For every $i,j\geq 1$ we have $\cp\left(\overline{\pazocal{VB}}_{i,j}\cup \big(\partial \pazocal{C}_o(h)\cap \pazocal{B}_{i,j}\big)\right)\geq N/2d$.
\end{lemma}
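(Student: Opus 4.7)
The plan is to prove by induction on $(i,j)$ in lexicographic order that the set
\[T_{i,j} := \overline{\pazocal{VB}}_{i,j} \cup \big(\partial \pazocal{C}_o(h) \cap \pazocal{B}'_{i,j}\big)\]
is a separating set for $\pazocal{C}_o(h)$. Once this is established, the desired capacity bound $\cp(T_{i,j}) \geq \cp(\pazocal{C}_o(h)) \geq N/2d$ follows verbatim from the standard argument used at the end of the proof of Lemma \ref{lem:cutset}.

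For the base case $(1,1)$, recall that $\pazocal{I}'_{1,1}$ is built by enlarging $\pazocal{I}_{1,1}(L_{1,1}) = \pazocal{I}(L_{1,1})$ with some $2^{k_{1,1}}$-boxes, so every $L_{1,1}$-box of $\pazocal{I}(L_{1,1})$ lies in $\pazocal{I}'_{1,1}$. In particular $\pazocal{VB}(L_{1,1}) \subset \pazocal{VB}'_{1,1}$ and $\pazocal{B}(L_{1,1}) \subset \pazocal{B}'_{1,1}$ as vertex sets, yielding $T_{1,1} \supset \pazocal{VB}(L_{1,1}) \cup \big(\partial\pazocal{C}_o(h) \cap \pazocal{B}(L_{1,1})\big)$. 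The right-hand side is separating by Lemma~\ref{lem:cutset}, so $T_{1,1}$ is as well.

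For the inductive step, fix $(i,j)$ and its immediate predecessor $(i',j')$, and take an infinite simple path $\gamma$ starting in $\pazocal{C}_o(h)$; assume $\gamma$ avoids $\overline{\pazocal{VB}}_{i,j}$, since otherwise there is nothing to prove. By the inductive hypothesis $\gamma$ hits $T_{i',j'}$, and since it avoids $\overline{\pazocal{VB}}_{i',j'} \subset \overline{\pazocal{VB}}_{i,j}$ it enters $\Sigma(\pazocal{B}'_{i',j'})$ through a vertex of $\partial\pazocal{C}_o(h)\cap \pazocal{B}'_{i',j'}$. Applying the propagation property~(iii) of Definition~\ref{def:bad_events} at scale $L_{i',j'}$ (and at $2^{k_{i',j'}}$ when appropriate), any transition of $\gamma$ from a box of $\pazocal{B}'_{i',j'}$ to a neighbouring box still in $\pazocal{C}_o(h,L_{i',j'})$ lands either inside $\pazocal{B}'_{i',j'}$ itself or inside $\pazocal{VB}'_{i',j'} \subset \overline{\pazocal{VB}}_{i,j}$; the latter is forbidden, so $\gamma$ stays in $\Sigma(\pazocal{B}'_{i',j'}) \cap \pazocal{C}_o(h)$ until leaving $\pazocal{C}_o(h)$ at some vertex $w \in \partial\pazocal{C}_o(h) \cap \Sigma(\pazocal{B}'_{i',j'})$.

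The key step is then a one-scale-finer repetition of the argument of Lemma~\ref{lem:cutset} applied to the confined subpath at scale $L_{i,j}$. By construction every $L_{i,j}$-box of $\pazocal{I}(L_{i,j})$ contained in $\Sigma(\pazocal{B}'_{i',j'})$ belongs to $\pazocal{I}'_{i,j}$, so at this scale the restricted very-bad and bad boxes inside $\Sigma(\pazocal{B}'_{i',j'})$ are precisely the $L_{i,j}$-pieces of $\pazocal{VB}'_{i,j}$ and $\pazocal{B}'_{i,j}$. Combining property~(ii) at scale $L_{i,j}$ (which gives $\partial\pazocal{C}_o(h,L_{i,j}) \subset \pazocal{I}(L_{i,j})$) with the propagation property~(iii) at the same scale, the three-case analysis from Lemma~\ref{lem:cutset} yields that the subpath either meets a box of $\pazocal{VB}'_{i,j} \subset \overline{\pazocal{VB}}_{i,j}$ or terminates at $w$ inside an $L_{i,j}$-box of $\pazocal{B}'_{i,j}$, so that $w \in \partial\pazocal{C}_o(h)\cap \pazocal{B}'_{i,j}\subset T_{i,j}$. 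The main technical obstacle is the third case of that analysis, in which the subpath travels through $L_{i,j}$-boxes of $\pazocal{C}_o(h,L_{i,j}) \setminus \pazocal{I}(L_{i,j})$: one must carefully verify that the outer $L_{i,j}$-box boundary of the relevant component inside $\Sigma(\pazocal{B}'_{i',j'})$ is contained in $\pazocal{VB}'_{i,j}$, by simultaneously invoking propagation at scale $L_{i,j}$ to rule out neighbouring $\pazocal{B}(L_{i,j})$-boxes and the confinement of the subpath to $\Sigma(\pazocal{B}'_{i',j'})$ to ensure these boundary boxes indeed belong to $\pazocal{I}'_{i,j}$.
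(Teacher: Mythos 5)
Your overall strategy (show the set is separating for $\pazocal{C}_o(h)$, then conclude via the capacity argument of Lemma~\ref{lem:cutset}) is the right one, and your base case is fine. But the inductive step has a genuine gap, located exactly where you flag the ``main technical obstacle''. First, your confinement claim is false as stated: propagation (item (iii) of Definition~\ref{def:bad_events}) applied at scale $L_{i',j'}$ only tells you that a neighbouring box $B''\in\pazocal{C}_o(h,L_{i',j'})$ of a box of $\pazocal{B}'_{i',j'}$ satisfies $\pazocal{E}_{B''}$, i.e.\ is bad or very-bad \emph{at that scale}; it does not tell you that $B''$ belongs to $\pazocal{I}'_{i',j'}$, because membership in $\pazocal{I}'_{i',j'}$ additionally requires $B''$ to be contained in $\Sigma(\pazocal{B}'_{i'',j''})$ for the predecessor $(i'',j'')$ of $(i',j')$. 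A bad neighbour lying just outside that region is neither in $\pazocal{B}'_{i',j'}$ nor in $\pazocal{VB}'_{i',j'}$, so the path can escape $\Sigma(\pazocal{B}'_{i',j'})$ without touching $\overline{\pazocal{VB}}_{i,j}$. Closing this requires the statement that every box of $\partial^{out}\pazocal{B}'_{i,j}$ lies either in $\overline{\pazocal{VB}}_{i,j}$ or outside $\pazocal{C}_o(h,L_{i,j})$, and that statement must itself be proved by an induction running through \emph{all} previous levels (each level's outer boundary is controlled by splitting into boxes absorbed into the previous level's bad set versus boxes in the previous level's outer boundary); your induction hypothesis, which is only the separating property of $T_{i',j'}$, does not carry this information.

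Second, the case you defer --- a component $S$ of $\pazocal{C}_o(h,L_{i,j})$ meeting none of the level-$(i,j)$ bad or very-bad boxes --- cannot be handled by a ``one-scale-finer repetition'' of Lemma~\ref{lem:cutset}, for the same reason: $S$ may extend outside $\Sigma(\pazocal{B}'_{i',j'})$, where the $(\varphi$-measurable$)$ bad and very-bad $L_{i,j}$-boxes exist but are \emph{not} elements of $\pazocal{I}'_{i,j}$, so the single-scale argument only yields $\partial^{out}S\subset\pazocal{VB}(L_{i,j})$, not $\partial^{out}S\subset\overline{\pazocal{VB}}_{i,j}$. One needs a second nested induction showing $\partial^{out}\big(\pazocal{C}_o(h,L_{i,j})\setminus(\overline{\pazocal{VB}}_{i,j}\cup\pazocal{B}'_{i,j})\big)\subset\overline{\pazocal{VB}}_{i,j}$, whose proof splits according to whether $S$ meets the previous level's bad set (in which case the first claim forces $S$ to be entirely contained in it) or not (in which case the previous level's version of this second claim applies). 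These two intertwined inductive claims constitute essentially the whole content of the lemma, and your proposal asserts rather than proves them.
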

\begin{proof}
	As in the proof of \Lr{lem:cutset}, the desired result will follow once we show that $X_{i,j}=\overline{\pazocal{VB}}_{i,j}\cup \big(\partial \pazocal{C}_o(h)\cap \pazocal{B}_{i,j}\big)$ is a separating set of $\pazocal{C}_o(h)$. Recall the definitions of $\pazocal{I}'_{i,j}$ and $\pazocal{I}_{i,j}(L_{i,j})$. We will prove that $X_{i,j}$ is a separating set of $\pazocal{C}_o(h)$ in the special case where $\pazocal{I}'_{1,1}=\pazocal{I}_{1,1}(L_{1,1}),\pazocal{I}'_{1,2}=\pazocal{I}_{1,2}(L_{1,2}),\ldots,\pazocal{I}'_{i,j}= \pazocal{I}_{i,j}(L_{i,j})$. The general case follows easily by removing $\left(\pazocal{I}'_{1,1}\setminus \pazocal{I}_{1,1}(L_{1,1})\right)\cup \left(\pazocal{I}'_{1,2}\setminus \pazocal{I}_{1,2}(L_{1,2})\right)\cup \ldots \cup \left(\pazocal{I}'_{i,j}\setminus \pazocal{I}_{i,j}(L_{i,j})\right)$ from $X_{i,j}$. 
	
	It is clear that $X_{i,j}$ is a separating set of $\pazocal{C}_o(h)\cap \overline{\pazocal{VB}}_{i,j}$. We claim that 
	\labtequ{claim}{every box in $\partial^{out} \pazocal{B}_{i,j}$ lies either in $\overline{\pazocal{VB}}_{i,j}$ or in $L_{i,j}\Z^d\setminus \pazocal{C}_o(h,L_{i,j})$,} which implies that $X_{i,j}$ is a separating set of $\pazocal{C}_o(h)\cap \pazocal{B}_{i,j}$ by arguing as in the proof of \Lr{lem:cutset}. Indeed, for $(i,j)=(1,1)$, the claim follows from property \ref{item:prop}. 
	Proceeding inductively, assume that the statement holds for an arbitrary $(i,j)$. Let $(k,l)$ be the next pair of indices, i.e.~$(k,l)=(i,j+1)$ if $j<j_i$ or $(k,l)=(i+1,1)$ if $j=j_i$. Clearly, every box in $\partial^{out} \pazocal{B}_{k,l}$ lies either in $L_{k,l}\Z^d\setminus \pazocal{C}_o(h,L_{k,l})$, in which case there is nothing to show, or in $\pazocal{C}_o(h,L_{k,l})$. So let us consider a box $B\in \partial^{out} \pazocal{B}_{k,l}\cap \pazocal{C}_o(h,L_{k,l})$. Then $B$ has a neighbour $B'\in  \pazocal{B}_{k,l}\subset \pazocal{B}_{i,j}$, which implies that $B$ is contained entirely in $\pazocal{B}_{i,j}$ or in $\partial^{out} \pazocal{B}_{i,j}$. 
	
	If $B$ is contained in $\pazocal{B}_{i,j}$, then $B\in \pazocal{VB}_{k,l}\subset \overline{\pazocal{VB}}_{k,l}$ because of our assumption that $B\in \partial^{out} \pazocal{B}_{k,l}\cap \pazocal{C}_o(h,L_{k,l})$. Let us now assume that $B$ is contained in some box $B''\in \partial^{out} \pazocal{B}_{i,j}$. It follows from our inductive hypothesis that $B''$ lies either in $\overline{\pazocal{VB}}_{i,j}$ or in $L_{i,j}\Z^d\setminus \pazocal{C}_o(h,L_{i,j})$. Notice that $L_{i,j}\Z^d\setminus \pazocal{C}_o(h,L_{i,j})\subset L_{k,l}\Z^d\setminus \pazocal{C}_o(h,L_{k,l})$, since any box $B_1\in L_{k,l}\Z^d$ is contained in a box $B_2\in L_{i,j}\Z^d$ and if $B_1$ intersects $\pazocal{C}_o(h)$, then so does $B_2$. Thus $B$ lies either in $\overline{\pazocal{VB}}_{k,l}$ or in $L_{k,l}\Z^d\setminus \pazocal{C}_o(h,L_{k,l})$. This proves the inductive statement and the claim follows.
	
	It remains to handle $\pazocal{C}_o(h)\setminus Y_{i,j}$, where $Y_{i,j}=\overline{\pazocal{VB}}_{i,j}\cup \pazocal{B}_{i,j}$. Let $Z_{i,j}=\pazocal{C}_o(h,L_{i,j})\setminus Y_{i,j}$. Then we claim that $\partial^{out} Z_{i,j}$ is contained in $\overline{\pazocal{VB}}_{i,j}$ which implies that $\overline{\pazocal{VB}}_{i,j}$ is a separating set of $\pazocal{C}_o(h)\setminus Y_{i,j}$. We will prove the claim inductively. For $(i,j)=(1,1)$, this follows from the proof of \Lr{lem:cutset} where it is shown that for every component $S$ of $\pazocal{C}_o(h,L_{1,1})\setminus \pazocal{I}(L_{1,1})$, we have $\partial^{out} S\subset \pazocal{VB}(L_{1,1})$. 
	
	Assume that the statement holds for some $(i,j)$. We will prove it for the next pair of indices $(k,l)$. Let $S$ be a component of $Z_{k,l}$. Although $S\subset Z_{k,l}$, it is possible that some box of $S$ is contained in $\pazocal{B}_{i,j}$. Let us assume that this is the case. Then by the connectivity of $S$ and \eqref{claim}, all boxes of $S$ are contained in $\pazocal{B}_{i,j}$. Notice that $\partial^{out} S$ lies in $\pazocal{C}_o(h,L_{k,l})$ because otherwise some box $B$ of $S$ lies in $\partial^{out} \pazocal{C}_o(h,L_{k,l})\cap \pazocal{B}_{i,j}$, hence $B$ is contained in $Y_{k,l}$, which contradicts the definition of $Z_{k,l}$. From this we deduce that $\partial^{out} S\subset Y_{k,l}$. Moreover, no box of $\partial^{out} S$ lies in $\pazocal{B}_{k,l}$ because otherwise some box of $S$ lies in $\pazocal{I}_{k,l}\subset Y_{k,l}$ by our assumption that $S$ is contained in $\pazocal{B}_{i,j}$. Therefore, $\partial^{out} S$ lies in $\overline{\pazocal{VB}}_{k,l}$.
	
	Let us now assume that no boxes of $S$ are contained in $\pazocal{B}_{i,j}$. Then $S$ lies entirely in $\pazocal{C}_o(h,L_{i,j})\setminus Y_{i,j}$, since $Y_{k,l}$ contains $\overline{\pazocal{VB}}_{i,j}$. We can now apply the induction hypothesis to deduce that $\partial^{out} S$ is contained in $\overline{\pazocal{VB}}_{i,j}\subset \overline{\pazocal{VB}}_{k,l}$. This completes the inductive proof.
\end{proof}

\section{Decay of badness}\label{sec:decay_bad}

In this section, we will prove Proposition~\ref{prop:psi-bad}. We will make use of the (supercritical) sharpness of phase transition for GFF percolation \cite{DGRS20} (i.e.~$\overline{h}=h_*$). We say that a box $B=B_L(z)$,  $z\in L\Z^d$, is $(\varphi,h)$-good if there exists a connected component in $\{\varphi\geq h\}\cap B$ with diameter at least  $L/5$ and furthermore any two clusters in $\{\varphi\geq h\}\cap U$ having diameter at least $L/10$ are connected to each other in $\{\varphi\geq h\}\cap D$. By the main result of \cite{DGRS20}, for every $h'<h_*$ there exist $\rho=\rho(d)\in(0,1)$ and $c=c(d,h')$ such that for every $h\leq h'$ and $L \geq 1$, 
\begin{equation}\label{eq:sharpness_sup}
	\Pr[B_L \text{ is $(\varphi,h)$-bad}]\geq 1-e^{-cL^{\rho}}.
\end{equation}
Our aim is to express the event that a box is $(\psi,h,\varepsilon)$-bad in terms of events depending on $\varphi$, so that we can use \eqref{eq:sharpness_sup}. For this purpose, we will make use of the following classical fact about discrete harmonic functions. For any function $f:\overline{D_L(z)}\rightarrow \R$ which is harmonic in $D_L(z)$, we have that 
\begin{equation}\label{eq:gradient}
	|f(x)-f(y)|\leq C'\left \Vert f \right \Vert_{\infty}/L
\end{equation} for neighbouring $x$ and $y$ in $B_L(z)$, where $C'=C'(d)>0$ is a universal constant -- see \cite[Theorem 1.7.1]{Law91}. We shall apply this result for $f=\xi^B$ and $B$ being a $(\xi,\varepsilon)$-good box for a certain value of $\varepsilon>0$. We first need to introduce some definitions.

Consider an integer $N\geq 1$ and let $L=\left\lfloor N/M \right\rfloor \approx N^{\frac{1}{\alpha+2}}$, where $\alpha=(2d+1)^2$ and $M=N^{\frac{\alpha+1}{\alpha+2}}$. We say that a connected subgraph $\pazocal{C}$ of $U_N$ is \emph{very dense} if it has diameter at least $N/5$ and for every box $B=B_L(z)$, $z\in L\Z^d$ contained in $B_N$, $\pazocal{C}\cap B$ contains a connected subgraph of diameter at least $L/5$. 

Given $0<\varepsilon<h_*-h$, we say that a \emph{strong local uniqueness} happens in $B_N$ if $\{\varphi\geq h+\varepsilon\}\cap U_N$ contains a very dense cluster and furthermore, for every $k\in\{-\left\lceil  \varepsilon L^{\alpha}\right\rceil,\ldots, 
\left\lceil \varepsilon L^{\alpha} \right\rceil\}$, every box $B=B_L(z)$, $z\in L\Z^d$ contained in $B_N$ is $(\varphi,h-rL^{-\alpha})$-good, where $r=k-1-7dC'\varepsilon$ and $C'$ is the constant appearing in \eqref{eq:gradient}. We denote by $\text{NSLU}(h,\varepsilon,N)$ the event that strong local uniqueness \emph{does not} happen in $B_N$.

\begin{lemma}\label{lemma:NSLU}
	For every $h'<h_*$, there exist constants $c=c(h',d,\varepsilon)>0,\rho=\rho(d)>0$ such that for every $h\leq h'$ and $N\geq 1$,
	$$\Pr[\text{NSLU}(h,\varepsilon,N)]\leq e^{-c N^\rho}.$$
\end{lemma}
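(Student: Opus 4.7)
The strategy is a union bound over $L$-boxes and over the integer parameter $k$, with each individual event controlled by the supercritical sharpness estimate \eqref{eq:sharpness_sup}.

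First, I would establish the deterministic geometric fact that if \emph{every} $L$-box $B = B_L(z) \subset B_N$ with $z\in L\Z^d$ is $(\varphi,h+\varepsilon)$-good, then $\{\varphi\geq h+\varepsilon\}\cap U_N$ contains a very dense cluster. Each such $B$ carries a $\{\varphi\geq h+\varepsilon\}$-cluster of diameter at least $L/5$. For two adjacent $L$-boxes $B_L(z)$ and $B_L(z+Le_i)$, both large clusters lie in $U_L(z)$ (since $B_L(z+Le_i)\subset U_L(z)$) and have diameter $\geq L/10$, so by the uniqueness clause in the definition of $(\varphi,h+\varepsilon)$-goodness for $B_L(z)$ they merge inside $\{\varphi\geq h+\varepsilon\}\cap D_L(z)\subset U_N$ (the inclusion $D_L(z)\subset U_N$ holds for $L\ll N$). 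Chaining these connections along adjacent $L$-boxes tiling $B_N$ produces a single cluster $\pazocal{C}\subset\{\varphi\geq h+\varepsilon\}\cap U_N$ containing each local large subcluster; hence $\pazocal{C}$ has diameter at least $N-2L\geq N/5$, and $\pazocal{C}\cap B$ contains a connected subgraph of diameter $\geq L/5$ for every $L$-box $B\subset B_N$, i.e.~$\pazocal{C}$ is very dense.

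Combining this with the definition of strong local uniqueness yields the inclusion
\begin{equation*}
\text{NSLU}(h,\varepsilon,N)\ \subset\ \bigcup_{\substack{B\subset B_N\\ B\in L\Z^d}}\left(\{B\text{ is }(\varphi,h+\varepsilon)\text{-bad}\}\cup\bigcup_{k=-\lceil\varepsilon L^\alpha\rceil}^{\lceil\varepsilon L^\alpha\rceil}\{B\text{ is }(\varphi,h-rL^{-\alpha})\text{-bad}\}\right),
\end{equation*}
where $r=k-1-7dC'\varepsilon$. Since implicitly $\varepsilon<h_*-h'$, I fix $h''\in(h'+\varepsilon,h_*)$ once and for all; for $N$ (and hence $L$) large enough, every level appearing above is $\leq h''$ uniformly over $h\leq h'$ and admissible $k$. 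Applying \eqref{eq:sharpness_sup} at level $h''$ bounds each of the $O(M^d L^\alpha)$ events by $e^{-cL^\rho}$, so a union bound gives $\Pr[\text{NSLU}(h,\varepsilon,N)]\leq C\, M^d L^\alpha e^{-cL^\rho}$. Inserting $L\asymp N^{1/(\alpha+2)}$ makes the exponential factor $e^{-cN^{\rho/(\alpha+2)}}$ dominate the polynomial prefactor $M^d L^\alpha$, producing a stretched exponential bound of the form $e^{-c'N^{\rho'}}$ with $\rho'=\rho(d)/(\alpha+2)$, valid for $N$ large; small $N$ are absorbed by shrinking $c'$.

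The conceptual heart of the proof is the deterministic claim of the second paragraph: one must verify that the two-part local definition of $(\varphi,h+\varepsilon)$-goodness, applied uniformly across all $L$-boxes in $B_N$, indeed produces a single cluster meeting both the global diameter lower bound and the local subcluster requirement in every $L$-sub-box, with the merging happening inside $U_N$. The rest of the argument reduces to a union bound over polynomially many events and the already established sharpness \eqref{eq:sharpness_sup}; the slight strengthening from ``there exists $h''<h_*$'' to ``uniform in $h\leq h'$ and in $k$'' is the reason for choosing $h''$ above both $h'+\varepsilon$ and all shifted thresholds, which is possible precisely because $\varepsilon<h_*-h'$.
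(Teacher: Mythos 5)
Your proposal is correct and follows essentially the same route as the paper's proof: the deterministic claim that simultaneous $(\varphi,\cdot)$-goodness of all $L$-boxes at the relevant levels forces a very dense cluster and strong local uniqueness, followed by a union bound over the $O(M^d L^{\alpha})$ box/level pairs using the sharpness estimate \eqref{eq:sharpness_sup}, with the polynomial prefactor absorbed by $e^{-cL^{\rho}}$ and $L\asymp N^{1/(\alpha+2)}$. Your explicit choice of a uniform level $h''<h_*$ dominating $h+\varepsilon$ and all shifted thresholds $h-rL^{-\alpha}$ is a point the paper leaves implicit, and is handled correctly.
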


Consider now the boxes of $L^2\Z^d$ contained in $B_N$. Given such a box $B$, we define $\text{Conf}(h,\varepsilon,B)$ as the event that there are a set $S\subset D$ of cardinality $|S|\geq L$ and an integer $k\in\{-\left\lceil  \varepsilon L^{\alpha}\right\rceil,\ldots,$  
$\left\lceil \varepsilon L^{\alpha} \right\rceil\}$ such that $h-kL^{-\alpha}\leq \varphi_x < h-rL^{-\alpha}$ for every $x\in S$, where $r=k-1-7dC'\varepsilon$. In other words, when the event $\text{Conf}(h,\varepsilon,B)$ happens, $\varphi_x$ is confined for at least $L$ vertices in $D$. Finally, we define $$\text{Conf}(h,\varepsilon,N)\coloneqq \bigcup_{B} \text{Conf}(h,\varepsilon,B),$$ where the union is taken over all boxes of $L^2\Z^d$ contained in $B_N$.

\begin{lemma}\label{lemma:small}
	For every $h'<h_*$ and every $0<\varepsilon<h_*-h'$, there exist constants $c=c(h',\varepsilon,d)>0,\rho=\rho(d)>0$ such that for every $h\leq h'$ and $N\geq 1$,
	$$\Pr[\text{Conf}(h,\varepsilon,N)]\leq e^{-c N^{\rho}}.$$
\end{lemma}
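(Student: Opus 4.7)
The plan is to bound $\Pr[\text{Conf}(h,\varepsilon,N)]$ by a union bound over the box $B$, the integer $k$, and the confinement set $S$, combined with a Gaussian small-ball estimate ruling out that $L$ coordinates of the GFF simultaneously lie in an interval of width $O(L^{-\alpha})$. Since the confinement event is monotone in $S$, I would first reduce to $|S|=L$. The combinatorial enumeration is cheap: at most $(N/L^2)^d\le N^d$ boxes $B$, at most $2\lceil\varepsilon L^\alpha\rceil+1$ values of $k$, and at most $\binom{|D|}{L}\le(7L^2)^{dL}$ subsets $S\subset D$ of size $L$.

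The heart of the argument is the single-triple estimate. Enumerating $S=\{x_1,\dots,x_L\}$, I would chain conditional probabilities along this order: for each $i$, the conditional law of $\varphi_{x_i}$ given $\varphi_{x_1},\dots,\varphi_{x_{i-1}}$ is Gaussian with some mean and variance $\sigma_i^2$. By the Markov property of the GFF on $\Z^d$, conditioning on $\{\varphi_y:y\neq x_i\}$ produces a Gaussian with variance exactly $1/(2d)$ (its density being proportional to $\exp(-\tfrac12\sum_{y\sim x_i}(\varphi_{x_i}-\varphi_y)^2)$). Since conditioning on fewer coordinates can only increase the variance (Schur-complement monotonicity for Gaussian vectors), $\sigma_i^2\ge 1/(2d)$ uniformly. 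Hence the one-dimensional conditional density is pointwise bounded by $\sqrt{d/\pi}$, so
\begin{equation*}
\Pr\bigl[\varphi_{x_i}\in[h-kL^{-\alpha},h-rL^{-\alpha})\,\bigm|\,\varphi_{x_1},\dots,\varphi_{x_{i-1}}\bigr]\le C_\varepsilon L^{-\alpha},
\end{equation*}
with $C_\varepsilon=(1+7dC'\varepsilon)\sqrt{d/\pi}$. The tower property then yields $\Pr[\text{all }\varphi_{x_i}\text{ confined}]\le (C_\varepsilon L^{-\alpha})^L$.

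Assembling everything,
\begin{equation*}
\Pr[\text{Conf}(h,\varepsilon,N)]\le N^d\cdot(4\varepsilon L^\alpha)\cdot(7L^2)^{dL}\cdot C_\varepsilon^L L^{-\alpha L}=N^d\,\text{poly}(L)\cdot C^L\,L^{-(\alpha-2d)L}.
\end{equation*}
With $\alpha=(2d+1)^2$ one has $\alpha-2d=4d^2+2d+1>0$, so the factor $L^{-(\alpha-2d)L}$ decays super-polynomially in $L$ and absorbs all the other factors once $L$ is large. Since $L\asymp N^{1/(\alpha+2)}$ by the definitions $L=\lfloor N/M\rfloor$ and $M=N^{(\alpha+1)/(\alpha+2)}$, the final bound is $\exp(-cL\log L)\le\exp(-cN^\rho)$ for any $\rho<1/(\alpha+2)$, with small $N$ absorbed into the constant $c=c(h',\varepsilon,d)$.

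I do not anticipate any serious obstacle: the argument is essentially a textbook Gaussian small-ball estimate combined with a combinatorial union bound. The main subtle point is the uniform lower bound $\sigma_i^2\ge 1/(2d)$, which I would justify through the Markov property rather than by inspecting the Green function directly. The quantitative choice $\alpha=(2d+1)^2$ is also important: it is picked with ample slack so that the per-vertex gain $L^{-\alpha}$ comfortably dominates the per-vertex combinatorial cost $(7L^2)^d$ of enumerating $S$.
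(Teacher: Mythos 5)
Your proof is correct and follows essentially the same strategy as the paper's: a union bound over the box $B$, the level index $k$ and the set $S$, combined with the observation that the relevant conditional densities of the field are bounded by a constant depending only on $d$, so that confining $L$ values to an interval of width $O(L^{-\alpha})$ costs $L^{-\alpha L}$ up to exponential factors, which beats the $(7L^2)^{dL}$ entropy of $S$ since $\alpha>2d$. The only (harmless) difference is in how the joint small-ball probability is obtained: the paper first extracts a $2$-separated subset $S'\subset S$ with $|S'|\geq L/(2d+1)$, on which the variables become conditionally independent given the field outside $S'$, whereas you chain conditional probabilities over all of $S$ using the Markov property together with the monotonicity of Gaussian conditional variances -- a valid variant that even yields a marginally better exponent.
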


Proposition~\ref{prop:psi-bad} follows readily by applying the following (deterministic) lemma for $\delta=(h_*-h'-\varepsilon)/2$ together with \eqref{eq:single_bad} and Lemmas~\ref{lemma:NSLU} and \ref{lemma:small} above. 

\begin{lemma}\label{lemma:psi to phi}
	Let $h<h_*$, $0<\varepsilon<h_*-h$ and $\delta<h_*-h-\varepsilon$. If the box $B_N$ is $(\psi,h,\varepsilon)$-bad, then one of the events $\{\text{$B_N$ is $(\xi,\delta)$-bad}\}$, $\text{NSLU}(h,\varepsilon+\delta,N)$ or $\text{Conf}(h,\varepsilon+\delta,N)$ happens.
\end{lemma}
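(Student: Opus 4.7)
The plan is to prove the contrapositive: assuming $B_N$ is $(\xi,\delta)$-good, $\mathrm{NSLU}(h,\varepsilon+\delta,N)$ fails, and $\mathrm{Conf}(h,\varepsilon+\delta,N)$ fails, I will show $B_N$ is $(\psi,h,\varepsilon)$-good. Fix any harmonic $g$ on $\overline{D}$ with $|g|<\varepsilon$ and set $\tilde g:=g-\xi^{B_N}$. Since $\xi^{B_N}$ is harmonic on the enlargement $K\supset D$ and $|\xi^{B_N}|<\delta$ on $D$, $\tilde g$ is harmonic on $D$ with $|\tilde g|<\varepsilon+\delta$. Using the identity $\psi^{B_N}+g=\varphi+\tilde g$, the task reduces to verifying the two conditions defining $(\psi,h,\varepsilon)$-goodness for the set $\{\varphi\geq h-\tilde g\}$.

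Existence of a cluster of diameter $\geq N/5$ in $\{\varphi+\tilde g\geq h\}\cap U$ is immediate: since $\tilde g>-(\varepsilon+\delta)$ and the failure of NSLU supplies a very dense cluster $\pazocal{D}\subset\{\varphi\geq h+(\varepsilon+\delta)\}\cap U$ of diameter $\geq N/5$, we have $\pazocal{D}\subset\{\varphi+\tilde g\geq h\}$. Let $\pazocal{E}\supset\pazocal{D}$ denote the cluster of $\{\varphi+\tilde g\geq h\}\cap U$ containing $\pazocal{D}$; it has diameter $\geq N/5$.

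The connection condition is the core of the argument: for any cluster $\pazocal{C}$ of $\{\varphi+\tilde g\geq h\}\cap U$ of diameter $\geq N/5$, I must produce a path in $\{\varphi+\tilde g\geq h\}\cap D$ from $\pazocal{C}$ to $\pazocal{E}$. The idea is to discretize the continuous threshold $h-\tilde g$ to a grid level $h-rL^{-\alpha}$ on a chosen $L^2$-box $B''\subset B_N$, so one can invoke the $(\varphi,h-rL^{-\alpha})$-good property of $L$-sub-boxes supplied by SLU. Given a path $\gamma\subset\pazocal{C}$ of length $\geq N/5$, a standard exit-time argument locates $B''$ such that $\gamma$ has a subpath $\gamma''\subset U_{L^2}(B'')$ of $\ell^\infty$-diameter $\geq L^2$. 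By \eqref{eq:gradient} applied to $\tilde g$ on the harmonic domain $D$, the oscillation of $\tilde g$ on $D_{L^2}(B'')$ is bounded by a constant multiple of $(\varepsilon+\delta)L^2/N=(\varepsilon+\delta)L^{-\alpha}$. This slack permits choosing an integer
\[
k:=\max\bigl(\lceil L^\alpha\sup_{D_{L^2}(B'')}\tilde g\rceil,\ \lceil -(\varepsilon+\delta)L^\alpha+1+7dC'(\varepsilon+\delta)\rceil\bigr)
\]
in the admissible range $\{-\lceil(\varepsilon+\delta)L^\alpha\rceil,\ldots,\lceil(\varepsilon+\delta)L^\alpha\rceil\}$ such that the derived $r:=k-1-7dC'(\varepsilon+\delta)$ simultaneously satisfies $rL^{-\alpha}\leq\inf_{D_{L^2}(B'')}\tilde g$ (giving the sandwich $\{\varphi\geq h-rL^{-\alpha}\}\subset\{\varphi+\tilde g\geq h\}\subset\{\varphi\geq h-kL^{-\alpha}\}$ on $D_{L^2}(B'')$) and $r\geq-(\varepsilon+\delta)L^\alpha$ (so $\pazocal{D}\subset\{\varphi\geq h-rL^{-\alpha}\}$). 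The symmetric difference of the sandwich is exactly the Conf band $\{h-kL^{-\alpha}\leq\varphi<h-rL^{-\alpha}\}$, of cardinality $<L$ in $D_{L^2}(B'')$ by the failure of Conf. Removing these $<L$ vertices from $\gamma''$ (which has $\geq L^2+1$ vertices), a triangle-inequality count on the resulting sub-path diameters yields a sub-path $\pazocal{Q}\subset\gamma''\cap\{\varphi\geq h-rL^{-\alpha}\}$ of $\ell^\infty$-diameter $\geq L-2\geq L/10$; after shortening if needed, $\pazocal{Q}\subset U_L(B')$ for some $L$-box $B'\subset B_N$. Both $\pazocal{Q}$ and the connected subgraph of $\pazocal{D}\cap B'$ of diameter $\geq L/5$ supplied by very-denseness lie in clusters of $\{\varphi\geq h-rL^{-\alpha}\}\cap U_L(B')$ of diameter $\geq L/10$, so the $(\varphi,h-rL^{-\alpha})$-good property of $B'$ connects them in $\{\varphi\geq h-rL^{-\alpha}\}\cap D_L(B')\subset\{\varphi+\tilde g\geq h\}\cap D$. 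Hence $\pazocal{C}\sim\pazocal{E}$, as required.

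The main obstacle is the orchestration of the integer $k$: it must lie in the admissible SLU/Conf range, yield both sandwich inclusions on $D_{L^2}(B'')$, and force $r\geq-(\varepsilon+\delta)L^\alpha$ so that $\pazocal{D}$ sits above the grid level $h-rL^{-\alpha}$. The exponent $\alpha=(2d+1)^2$ and the $7dC'\varepsilon$ correction built into the definitions of NSLU and Conf are calibrated precisely so the gradient estimate furnishes the required slack. A secondary subtlety is selecting the crossing $L^2$-box when $\pazocal{C}$ lies partly in the shell $U_N\setminus B_N$; one can handle this by starting the exit argument from a vertex of $\pazocal{C}$ closest to $B_N$ and using that $D_{L^2}(B'')$ extends beyond $B_N$, so suitable boundary $L^2$-boxes still capture the relevant portion of $\gamma$.
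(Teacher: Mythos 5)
Your proposal is correct and is essentially the paper's own argument run in the contrapositive direction: the same reduction of $\psi^{B_N}+g$ to $\varphi+\tilde g$ via $(\xi,\delta)$-goodness, the same discretization of the harmonic shift to a grid level $h-rL^{-\alpha}$ on an $L^2$-box using the gradient estimate \eqref{eq:gradient}, the same pigeonhole removal of the fewer-than-$L$ ``confined'' vertices to extract a subpath of diameter of order $L$, and the same final appeal to $(\varphi,h-rL^{-\alpha})$-goodness of an $L$-box together with very-denseness of the SLU cluster. The only cosmetic difference is your explicit max-formula for $k$ versus the paper's ``smallest admissible $k$ with $\sup f\le kL^{-\alpha}$''; both choices face the same harmless edge case (when the shift sits within $O(L^{-\alpha})$ of $-(\varepsilon+\delta)$) that the constants in the definitions of NSLU and Conf are meant to absorb.
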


We now turn to the proof of each of the above lemmas. 

\begin{proof}[Proof of \Lr{lemma:psi to phi}]
	If $B_N$ is $(\xi,\delta)$-bad or the event $\text{Conf}(h,\varepsilon+\delta,N)$ happens, then there is nothing to prove, so let us assume that $B_N$ is $(\xi,\delta)$-good and $\text{Conf}(h,\varepsilon+\delta,N)$ does not happen. We need to show that $\text{NSLU}(h,\varepsilon+\delta,N)$ happens. To this end, if $\{\varphi\geq h+\varepsilon+\delta\}\cap U_N$ does not contain a very dense cluster, then $\text{NSLU}(h,\varepsilon+\delta,N)$ happens, so let us assume that $\{\varphi\geq h+\varepsilon+\delta\}\cap U_N$ does contain a very dense cluster $\pazocal{C}_1$. 
	
	We claim that for some function $f:\overline{D_N}\rightarrow \R$ which is harmonic in $D_N$ and satisfies $|f(x)|<\varepsilon+\delta$ for every $x\in D_N$,
	\labtequ{eq:connection}{$\{\varphi +f\geq h\}\cap U_N$ contains a cluster $\pazocal{C}_2$ of diameter at least $N/5$ which is not connected to $\pazocal{C}_1$ in $\{\varphi+f\geq h\}\cap D_N$.}
	Indeed, $B_N$ is $(\psi,h,\varepsilon)$-bad, so it follows from the decomposition of $\varphi$ that there is a function $f$ as above for which either $\{\varphi +f\geq h\}\cap U_N$ does not contain a cluster of diameter at least $N/5$ or \eqref{eq:connection} happens. However, $\pazocal{C}_1$ is contained in $\{\varphi +f\geq h\}\cap U_N$ and has diameter at least $N/5$, which implies that \eqref{eq:connection} happens.
	
	Fix now a box $B=B_{L^2}(z)\in L^2\Z^d$ that intersects $\pazocal{C}_2$, such that $D=D_{L^2}(z)$ is contained in $B_N$. Notice that 
	\begin{equation}\label{eq:max}
		\varphi_x+\max_{u\in D} f(u)\geq \varphi_x+f(x) \geq h \text{ for } x\in \pazocal{C}_2\cap D
	\end{equation}
	and 
	\begin{equation}\label{eq:min}
		\varphi_x+\min_{u\in D} f(u)\leq \varphi_x+f(x) < h \text{ for } x\in \partial^{out} \pazocal{C}_2\cap D.
	\end{equation} 
	Since $f$ is harmonic in $D_N$ and $|f(x)|< \varepsilon+\delta$ for every $x\in D_N$, we have that $|f(x)-f(y)|\leq C'(\varepsilon+\delta)/N$ for neighbouring $x$ and $y$ in $B_N$ by \eqref{eq:gradient}. Since $D$ has diameter at most $7dL^2$, we conclude that 
	$$\max_{u\in D} f(u)-\min_{u\in D} f(u)\leq 7dC'(\varepsilon+\delta) L^2/N\leq 7dC'(\varepsilon+\delta) L^{-\alpha}.$$ Consider the smallest $k\in \{-\left\lceil (\varepsilon+\delta) L^{\alpha}\right\rceil,\ldots,\left\lceil (\varepsilon+\delta) L^{\alpha}\right\rceil\}$ such that $\max_{u\in D}f(u)\leq kL^{-\alpha}$. Then $\max_{u\in D}f(u)> (k-1)L^{-\alpha}$, hence 
	\begin{equation}\label{eq:min_bound}
		\min_{u\in D}f(u)\geq \max_{u\in D}f(u)-7dC'(\varepsilon+\delta) L^{-\alpha}>(k-1-7dC'(\varepsilon+\delta))L^{-\alpha}.
	\end{equation} 
	We can now deduce from \eqref{eq:max} and \eqref{eq:min} that
	\begin{equation}\label{eq:c2}
		\varphi \geq h-kL^{-\alpha} \text{ on } \pazocal{C}_2\cap D
	\end{equation}
	and 
	\begin{equation}\label{eq:bc2}
		\varphi < h-rL^{-\alpha} \text{ on } \partial^{out} \pazocal{C}_2\cap D,
	\end{equation}
	where $r=k-1-7dC'(\varepsilon+\delta)$.
	
	Now as $\text{Conf}(h,\varepsilon,N)$ does not happen and \eqref{eq:c2} holds, for all but at most $L-1$ vertices $x$ of $\pazocal{C}_2\cap D$ we have $\varphi_x\geq h-rL^{-\alpha}$. We claim that $\{\varphi\geq h-rL^{-\alpha}\}\cap \pazocal{C}_2\cap D$ contains a cluster of diameter at least $L$. Indeed, notice that $\pazocal{C}_2\cap D$ contains a connected set of diameter at least $3L^2$ because the graph distance between $B$ and $\partial D$ is $3L^2$. Consider a path $\gamma$ in $\pazocal{C}_2\cap D$ connecting two vertices $u$ and $v$ with graph distance $d(u,v)\geq 3L^2$. Then we have that 
	$$3L^2\leq \sum_{i=1}^{j-1} d(x_i,x_{i+1}),$$
	where $x_0=u,x_j=v$ and $x_1,\ldots,x_{j-1}$ are the vertices of $\gamma$ in between $u$ and $v$ such that $h-kL^{-\alpha}\leq \varphi_{x_i}<h-rL^{-\alpha}$, ordered in turn of appearance in $\gamma$ as we move from $u$ to $v$. As $\gamma$ can have at most $L-1$ vertices $x$ such that $h-kL^{-\alpha}\leq \varphi_x<h-rL^{-\alpha}$, we can deduce that $j\leq L$, hence for some $i\in\{0,1,\ldots,j-1\}$ we have that $d(x_i,x_{i+1})\geq \frac{3L^2}{j}\geq 3L$. The subpath $\gamma'$ of $\gamma$ in between $x_i$ and $x_{i+1}$ has thus diameter at least $3L-2\geq L$ and $\varphi_x \geq h-rL^{-\alpha}$ for every $x\in \gamma'$. Consider the cluster $\pazocal{C}_3$ of $\{\varphi \geq h-rL^{-\alpha}\}\cap D$ containing $\gamma'$. We will show that $\pazocal{C}_2$ contains $\pazocal{C}_3$, which proves the claim. To this end, recall that $\pazocal{C}_2$ is a cluster of $\{\varphi+f\geq h\}\cap U_N$. Notice that for every $x\in D$, $\varphi_x+f(x)\geq \varphi_x+\min_{u\in D}f(u)>\varphi_x+rL^{-\alpha}$ by \eqref{eq:min_bound}, hence 
	\begin{equation}\label{eq:cont}
		\{\varphi \geq h-rL^{-\alpha}\}\cap D\subset \{\varphi+f\geq h\}\cap D.
	\end{equation} 
	Since $\pazocal{C}_2$ and $\pazocal{C}_3$ overlap at $\gamma'$, we deduce that $\pazocal{C}_2$ contains $\pazocal{C}_3$.
	
	Consider a box $B_L(w)\in L\Z^d$ lying in $B$ that intersects $\pazocal{C}_3$. We will show that $B_L(w)$ is $(\varphi,h-rL^{-\alpha})$-bad, which implies that $\text{NSLU}(h,\varepsilon+\delta,N)$ happens, as desired. Recalling the definition of a very dense cluster, we see that $B_L(w)$ intersects $\pazocal{C}_1$ as well. Notice that both $\pazocal{C}_3\cap U_L(w)$ and $\pazocal{C}_1\cap U_L(w)$ contain a cluster of diameter at least $L/5$ because both $\pazocal{C}_3$ and $\pazocal{C}_1$ have diameter at least $L/5$. On the other hand, $\pazocal{C}_3$ is not connected to $\pazocal{C}_1$ in $\{\varphi\geq f+h\}\cap D_L(w)$ by \eqref{eq:connection} and the fact that $\pazocal{C}_3\subset \pazocal{C}_2$. Using \eqref{eq:cont}, we can deduce that $\pazocal{C}_3$ is also not connected to $\pazocal{C}_1$ in $\{\varphi \geq h-rL^{-\alpha}\}\cap D_L(w)$. Thus $B_L(w)$ is $(\varphi,h-rL^{-\alpha})$-bad.
\end{proof}

\begin{proof}[Proof of \Lr{lemma:NSLU}]
	Let us start by constructing a very dense cluster. By increasing the value of $N$, if necessary, we can assume that for every $B=B_L(z)\in L\Z^d$ contained in $B_N$, $D=D_L(z)$ is contained in $U_N$. It is not hard to see that if all boxes of $L\Z^d$ that are contained in $B_N$ are $(\varphi,h+\varepsilon)$-good, then $\{\varphi\geq h+\varepsilon\}\cap U_N$ contains a cluster $\pazocal{C}$ such that $\pazocal{C}\cap B$ contains a cluster of diameter at least $L/5$ for every $B\in L\Z^d$ lying in $B_N$. This is because for every pair of neighbouring boxes $B$ and $B'$ contained in $B_N$, both $\{\varphi\geq h+\varepsilon\}\cap B$ and $\{\varphi\geq h+\varepsilon\}\cap B'$ contain a cluster of diameter at least $L/5$, and these two clusters are connected in $\{\varphi\geq h+\varepsilon\}\cap D$. Increasing the value of $N$ even further ensures that $\pazocal{C}$ has diameter at least $N/5$, i.e.~it is a very dense cluster.
	
	If for every $k\in\{-\left\lceil  \varepsilon L^{\alpha}\right\rceil,\ldots, 
	\left\lceil \varepsilon L^{\alpha} \right\rceil\}$, all boxes of $L\Z^d$ contained in $B_N$ are $(\varphi,h-rL^{-\alpha})$-good, then we have strong local uniqueness. Since there are at most $CM^d$ boxes of $L\Z^d$ contained in $B_N$ and we are considering $2\left\lceil  \varepsilon L^{\alpha}\right\rceil +2$ different level-sets (with the $h+\varepsilon$ level-set included), we can apply \eqref{eq:sharpness_sup} to obtain that
	$$\Pr[\text{NSLU}(h,\varepsilon,N)]\leq (2\left\lceil  \varepsilon L^{\alpha}\right\rceil +2)CM^d e^{-cL^{\rho}}\leq e^{-c'N^{\rho'}}$$
	for some constants $c'=c'(h',d)>0$ and $\rho'=\rho'(d)>0$, as desired.
\end{proof}

\begin{proof}[Proof of \Lr{lemma:small}]
	We will show that the probability of $\text{Conf}(h,\varepsilon,B)$ decays stretched exponentially for every $B\in L^2\Z^d$. Then the desired result will follow from a union bound over all $B\in L^2\Z^d$ lying in $B_N$ and the fact that there are polynomially many choices for $B$.
	
	In order to prove the aforementioned result, consider a subset $S$ of $D$ of cardinality $L$ and an integer $k\in\{-\left\lceil  \varepsilon L^{\alpha}\right\rceil,\ldots, 
	\left\lceil \varepsilon L^{\alpha} \right\rceil\}$. We will estimate the probability that $h-kL^{-\alpha}\leq \varphi_x < h-rL^{-\alpha}$ for all $x\in S$ and then apply a union bound over all possible $S$ and $k$. Let us set $h_1\coloneqq h-kL^{-\alpha}$ and $h_2\coloneqq h-rL^{-\alpha}$. Choose a subset $S'$ of $S$ such that for every $x,y\in S'$ we have $d(x,y)\geq 2$, and $S'$ is a maximal subset of $S$ with respect to this property. Then $|S'|\geq \frac{L}{2d+1}$. Now conditioning on $\varphi_y$ for $y\in\Z^d \setminus S'$, we obtain
	\begin{gather*}
		\Pr[h_1\leq \varphi_x < h_2, \forall x\in S']=\Ex\left[\Pr\left[h_1\leq \varphi_x < h_2, \forall x\in S'\mid \sigma(\varphi_y, y\in\Z^d \setminus S')\right]\right]=\\ \Ex\left[\prod_{x\in S'} \Pr\left[h_1\leq \varphi_x<h_2\mid \sigma(\varphi_y, y\in\Z^d \setminus S') \right]\right]\leq (h_2-h_1)^{|S'|}\leq (h_2-h_1)^{\frac{L}{2d+1}}.
	\end{gather*}
	For the second equality, we used that conditionally on all $\varphi_y$, $y\notin S'$, the random variables $\varphi_x$, $x\in S'$ are pairwise independent. For the first inequality, we used that $$\Pr[h_1\leq \varphi_x<h_2\mid \sigma(\varphi_y, y\in\Z^d \setminus S')]\leq h_2-h_1$$ which follows from the fact that conditionally on $\sigma(\varphi_y, y\in\Z^d \setminus S')$, $\varphi_x$ is a normal random variable with variance $1$ (the value of the mean is not important), hence its probability density function is bounded by $1$.
	
	On the other hand, $D$ contains $(7L^2)^d$ vertices, hence there are at most $(7L^2)^{dL}$ possible subsets of $D_{L^2}$ of cardinality $L$. A union bound over the $2\left\lceil \varepsilon L^{\alpha} \right\rceil +1$ possible values of $k$ and the subsets of $D_{L^2}$ of cardinality $L$ implies that $$\Pr[\text{Conf}(h,\varepsilon,D_{L^2})]\leq \left(2\left\lceil \varepsilon L^{\alpha} \right\rceil +1 \right) (7L^2)^{dL} (h_2-h_1)^{\frac{L}{2d+1}}.$$ By our choice of $\alpha$,
	\begin{align*}
		\left(2\left\lceil \varepsilon L^{\alpha} \right\rceil +1 \right)(7L^2)^{dL}(h_2-h_1)^{\frac{L}{2d+1}}&=\exp\left\{2dL\log(L)+\frac{-\alpha}{2d+1}L\log(L)++O(L)\right\}\\
		&= \exp\left\{-L\log(L)+O(L)\right\}.
	\end{align*}
	This completes the proof.
\end{proof}

\section{Decay of very-badness}\label{sec:decay_verybad}

In this section, we will prove Proposition~\ref{prop:psi-very-bad}. First, we need to express the event that $B$ is $(\psi,h,\varepsilon)$-very-bad in terms of $\varphi$ and $\xi$. 

We say that a box $B$ is $(\varphi,h,\varepsilon)$-\emph{very-good} if for every function $g:\overline{D}\rightarrow \R$ which is harmonic in $D$ and $|g(x)|<\varepsilon$ for all $x\in D$, the following happen: 
\begin{itemize}
	\item for every $B'$ which is either $B$ or some neighbour of $B$, $\{\varphi+g\geq h\}\cap B'$ contains a dense cluster,
	\item for every neighbour $B''$ of $B$ and every pair of dense clusters of $\{\varphi+g\geq h\}\cap B$ and $\{\varphi+g\geq h\}\cap B''$, respectively, there is a path in $\{\varphi+g\geq h\}\cap D$ visiting both dense clusters.
\end{itemize} 
If $B$ is not $(\varphi,h,\varepsilon)$-very-good, we will call it $(\varphi,h,\varepsilon)$-\emph{very-bad}. It is not hard to see that if $B$ is $(\psi,h,\varepsilon)$-very-bad and $(\xi,\delta)$-good for some $\delta>0$, then it is $(\varphi,h,\varepsilon+\delta)$-very-bad.

We shall now introduce another event that will be used to handle the non-uniqueness of a dense cluster. We define $H(h,\varepsilon,B)$ to be the event that there are
\begin{itemize}
	\item a function $g:\overline{D}\rightarrow \R$ which is harmonic in $D$ and $|g(x)|<\varepsilon$ for all $x\in D$, and
	\item a pair $\pazocal{C}_1,\pazocal{C}_2$ of clusters of $\{\varphi+g\geq h\}\cap U$ of diameter at least $L/5$,
\end{itemize}
for which there is no path in $\{\varphi+g\geq h\}\cap D$ connecting $\pazocal{C}_1$ with $\pazocal{C}_2$. It is not hard to see that if $H(h,\varepsilon,B)$ happens and $B$ is $(\xi,\delta)$-good for some $\delta>0$, then $B$ is $(\psi,h,\varepsilon+\delta)$-bad.

Recall that the definition of a dense cluster involves considering the boxes of $L_0\Z^d$ that are contained in $B_L$. In order to construct a dense cluster, we will need to work with the columns of this collection of $L_0$-boxes. To define them precisely, let $\{e_1,e_2,\ldots,e_d\}$ be the standard basis of $\Z^d$. Given a collection $\pazocal{F}$ of boxes of $R\Z^d$ for some $R\geq 1$, the columns of $\pazocal{F}$ parallel to $e_i$, $i\in\{1,2,\ldots,d\}$ are defined as follows. For every sequence of integers $(y_j)_{j=1,j\neq i}^d$, the set of boxes $B_R(z)\in \pazocal{F}$, $z=(z_1,z_2,\ldots,z_d)$ with $z_j=y_j$ for every $j\neq i$, will be called a \emph{column} of $\pazocal{F}$ parallel to $e_i$.  

We will now prove Proposition~\ref{prop:psi-very-bad}.

\begin{proof}[Proof of Proposition~\ref{prop:psi-very-bad}]
	Notice that if $B_L$ is $(\psi,h,\varepsilon)$-very-bad and $(\xi,\delta)$-good for some $\delta>0$, then it is $(\varphi,h,\varepsilon+\delta)$-very-bad. Applying this observation for $\delta=(h_*-h'-\varepsilon)/2$ and using \Lr{lem:multi_Szn} to handle the case that $B$ is $(\xi,\delta)$-good, we see that (after redefining $\varepsilon$) it suffices to prove that for every $h'<h_*$ and $0<\varepsilon<h_*-h'$ there is a constant $c=c(h',\varepsilon,d)>0$ such that for every $h\leq h'$
	
	$$\Pr[\text{$B_L$ is $(\varphi,h,\varepsilon)$-very-bad}]\leq e^{-c L^{d-2}}.$$
	
	We will first focus on the existence of a dense cluster. Recall that $L_0=L/M$, where $M=\left\lfloor L^{\frac{d-2}{d-1}}/\log(L)\right\rfloor$. Consider the boxes of $L_0\Z^d$ contained in $U_L$ and notice that they form a partition of $U_L$. We will show that when only a few columns of this partition contain a $(\varphi,h+\varepsilon)$-bad box, $\{\varphi\geq h+\varepsilon\}\cap B'_L$ contains a dense cluster, where $B'_L$ is either $B_L$ or a neighbouring box of $B_L$. The latter easily implies that $\{\varphi+g\geq h\}\cap B'_L$ contains a dense cluster for every function $g:\overline{D_L}\rightarrow \R$ which is harmonic in $D_L$ and $|g(x)|<\varepsilon$ for all $x\in D_L$. Then we will proceed to show that the probability of having many columns that contain a $(\varphi,h+\varepsilon)$-bad box decays exponentially in $L^{d-2}$.
	
	Among the columns of the partition of $U_L$ that are parallel to $e_i$, $i=1,2,\ldots,d$, consider those that contain a box which is $(\varphi,h+\varepsilon)$-bad. We let $\Phi_i$ be the event that there are at least $\frac{M^{d-1}}{10(2d-1)!}$ such columns. When the event $\bigcup_{i=1}^d \Phi_i$ does not happen, we will show that $\{\varphi\geq h+\varepsilon\}\cap B'_L$ contains a dense cluster. To this end, since the dense cluster needs to lie in $B'_L$, we need to restrict to the collection of boxes $B_{L_0}(z)$ such that $D_{L_0}(z)$ is contained in $B'_L$. Let us assume that $L$ is large enough so that this collection is non-empty. This collection forms a partition of a smaller box $B''_L$ that is contained in $B'_L$. Notice that the number of boxes in each column of $B''_L$ is $M-6$. Let $\Gamma$ be the set of boxes of the partition of $B''_L$ that are $(\varphi,h+\varepsilon)$-good. Then for every $e_i$, $\Gamma$ contains at least $$(M-6)^{d-1}-\frac{M^{d-1}}{10(2d-1)!}\geq \left(1-\frac{1}{5(2d-1)!}\right)(M-6)^{d-1}$$ columns parallel to $e_i$, provided that $L$ is large enough so that $$\frac{M^{d-1}}{2}\leq (M-6)^{d-1}.$$ By \Lr{connected} below, a connected component $\pazocal{F}$ of $\Gamma$ contains at least $\frac{4}{5}(M-6)^d$ boxes. Increasing the value of $L$, if necessary, we can assume that $\frac{4}{5}(M-6)^d\geq \frac{3}{4}M^d$, so that $\pazocal{F}$ contains at least $\frac{3}{4}M^d$ boxes. For each pair of neighbouring boxes $B=B_{L_0}(z)$, $B'$ in $\pazocal{F}$, both $\{\varphi\geq h+\varepsilon\}\cap B$ and $\{\varphi\geq h+\varepsilon\}\cap B'$ contain a cluster of diameter at least $L_0/5$, hence there is a path in $\{\varphi\geq h+\varepsilon\}\cap D$ visiting both clusters, where $D=D_{L_0}(z)$. By combining all these clusters, we obtain that $\{\varphi\geq h+\varepsilon\}\cap B'_L$ contains a cluster $\pazocal{C}$ visiting all boxes of $\pazocal{F}$. 
	
	To show that $\pazocal{C}$ is dense, it remains to estimate its diameter. Since $\pazocal{F}$ contains at least $\frac{3}{4}M^d$ boxes, it must intersect a column $\mathrm{Col}(\pazocal{F})$ of $B''_L$ contained entirely in $\Gamma$. Since $\pazocal{F}$ is a connected component of $\Gamma$, it must contain $\mathrm{Col}(\pazocal{F})$. In other words, $\pazocal{C}$ contains a vertex from the two extremal boxes of $\mathrm{Col}(\pazocal{F})$, which implies that $\pazocal{C}$ has diameter at least $(M-8)L_0$. We have that $(M-8)L_0=L-8L_0\geq L/5$, provided that $L$ is large enough. Thus $\pazocal{C}$ is a dense cluster.
	
	We will now estimate $\Pr[\bigcup_{i=1}^d \Phi_i]$. To this end, let $i=1,2,\ldots,d$ and $S$ be a set of $\frac{M^{d-1}}{10(2d-1)!}$ boxes that lie in different columns of $U_L$ parallel to $e_i$. We will first count the possibilities for $S$ and then estimate the probability that for a fixed $S$ as above, all its boxes are $(\varphi,h+\varepsilon)$-bad. Notice that there are $A^{d-1}$ columns parallel to $e_i$, where $A=3M$, and each column contains $A$ boxes. Hence there are at most $$2^{A^{d-1}}A^{A^{d-1}}=\exp\left\{A^{d-1}\log(2A)\right\}\leq\exp\left\{C\frac{L^{d-2}}{\log^{d-2}(L)}\right\}$$ possibilities for $S$, since we can construct $S$ by first choosing a set of columns and then picking a box from each column of this set. 
	
	Moving on to the probabilistic estimate, let $S'$ be a subset of $S$ which is well-separated and is maximal with respect to this property. Then it is not hard to see that $|S'|\geq 201^{-d} |S|$. Let $\varepsilon_0=(h_*-h'-\varepsilon)/2$. We will now consider two cases. Either at least $|S'|/2$ boxes of $S'$ are $(\xi,\varepsilon_0)$-good or at least $|S'|/2$ boxes of $S'$ are $(\xi,\varepsilon_0)$-bad. In the first case, because we have assumed that all boxes of $S$ are $(\varphi,h+\varepsilon)$-bad, we can deduce that at least $|S'|/2$ boxes of $S'$ are $(\psi,h+\varepsilon,\varepsilon_0)$-bad. Applying Proposition~\ref{prop:psi-bad} and using a union bound over the subsets of $S'$ we obtain
	$$\Pr[\text{ at least $|S'|/2$ boxes of $S'$ are $(\psi,h+\varepsilon,\varepsilon_0)$-bad]}\leq 2^{|S'|}\exp\{-c_1|S'|L_0^{\rho_1}/2\}\leq \exp\{-c_2L^{d-2}\},$$
	where in the last inequality we used that 
	\begin{equation}\label{eq:small_size}
		|S'|\leq M^{d-1} \leq \frac{L^{d-2}}{\log^{d-2}(L)}
	\end{equation}
	and 
	$$|S'|L_0^{\rho_1}\geq c_3 \frac{L^{d-2}L_0^{\rho_1}}{\log^{d-2}(L)}.$$
	On the other hand, if the second case holds, we can argue as follows. Let $T$ be the set of boxes of $S'$ that are $(\xi,\varepsilon_0)$-bad. Applying \Lr{lem:col_cap} below, we see that $\cp(\Sigma(T))\geq rL^{d-2}$ for some constant $r>0$. We shall now apply \Lr{lem:multi_Szn} and for this reason we need to check that $|T|\leq \delta rL^{d-2}$, where $\delta$ is the constant of \Lr{lem:multi_Szn}. This inequality follows from \eqref{eq:small_size} by choosing $L$ to be large enough.
	Hence a union bound over the subsets of $S'$ implies that 
	$$\Pr[\text{ at least $|S'|/2$ boxes of $S'$ are $(\xi,\varepsilon_0)$-bad]}\leq 2^{|S'|}\exp\{-cr\varepsilon_0^2 L^{d-2}\}\leq \exp\{-c_4 L^{d-2}\}.$$
	Overall, we obtain that 
	$$\Pr[\cup_{i=1}^d \Phi_i]\leq \exp\left\{C\frac{L^{d-2}}{\log^{d-2}(L)}\right\}\left(\exp\{-c_2L^{d-2}\}+\exp\{-c_4L^{d-2}\}\right)\leq \exp\{-c_5L^{d-2}\}.$$

	Let $B'_L$ be a neighbouring box of $B_L$. We shall now consider the event that for some function $g:\overline{D}\rightarrow \R$ which is harmonic in $D$ and $|g(x)|<\varepsilon$ for all $x\in D$, and a pair $\pazocal{C}_1$, $\pazocal{C}_2$ of dense clusters of $\{\varphi+g\geq h\}\cap B_L$ and $\{\varphi+g\geq h\}\cap B'_L$, respectively, there is no path in $\{\varphi+g\geq h\}\cap D_L$ connecting $\pazocal{C}_1$ to $\pazocal{C}_2$. Let $i=1,2,\ldots,d$ be such that $B'_L=B_L\pm Le_i$. Notice that for every $L_0$-box $B$ intersecting $\pazocal{C}_j$, $j=1,2$, $\pazocal{C}_j\cap U$ contains a cluster of diameter at least $L_0/5$. Hence each column of $B_L\cup B'_L$ parallel to $e_i$ that intersects both $\pazocal{C}_1$ and $\pazocal{C}_2$ must contain a box $B\in L_0\Z^d$ such that $H(h,\varepsilon,B)$ happens, since otherwise, $\pazocal{C}_1$ and $\pazocal{C}_2$ are connected in $\{\varphi+g\geq h\}\cap D_L$. We will show that many columns are intersected by both clusters. Indeed, it follows from the definition of a dense cluster that each of $\pazocal{C}_1$ and $\pazocal{C}_2$ intersects at least $3M^{d-1}/4$ columns parallel to $e_i$. In particular, at least $M^{d-1}/2$ columns parallel to $e_i$ are intersected by both $\pazocal{C}_1$ and $\pazocal{C}_2$, and all of them contain a box $B\in L_0\Z^d$ such that $H(h,\varepsilon,B)$ happens. 
	
	To estimate the probability of the event that at least $M^{d-1}/2$ columns  contain a box $B\in L_0\Z^d$ such that $H(h,\varepsilon,B)$ happens, we consider two cases. Either at least $M^{d-1}/4$ columns contain a $(\xi,\varepsilon_0)$-bad box or at least $M^{d-1}/4$ columns contain a box $B$ such that $H(h,\varepsilon,B)$ happens and $B$ is $(\xi,\varepsilon_0)$-good. When $H(h,\varepsilon,B)$ happens and $B$ is $(\xi,\varepsilon_0)$-good, $B$ is $(\psi,h,\varepsilon+\varepsilon_0)$-bad. In both cases, we can argue as above to obtain the desired decay.
\end{proof}

We will now prove the two lemmas mentioned above.

\begin{lemma}\label{connected}
	Let $d\geq 2$ and $0<x<\frac{1}{(2d-1)!}$. Consider a subset $\Gamma$ of $B_L$ such that for every direction $e_i$, $\Gamma$ contains at least $(1-x)L^{d-1}$ columns of $B_L$ parallel to $e_i$. Then $\Gamma$ contains a connected set of size at least $(1-(2d-1)!x)L^d$.
\end{lemma}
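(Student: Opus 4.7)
The plan is to proceed by induction on $d$. The base case $d=2$ is immediate: the hypothesis supplies at least $(1-x)L$ full rows and at least $(1-x)L$ full columns of $\Gamma$, and since every full row meets every full column in a vertex of $\Gamma$, the union of all these lines is a connected subset of $\Gamma$ of size at least $(1-x)L^2\ge (1-3!\,x)L^2$.

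For the inductive step from $d-1$ to $d$, slice $B_L=[0,L)^d$ perpendicular to $e_1$ into $L$ hyperplanes $H_j=\{j\}\times[0,L)^{d-1}$ and exploit two features. If $F_1\subset[0,L)^{d-1}$ denotes the set of indices of full $e_1$-columns of $\Gamma$, then $|F_1|\ge (1-x)L^{d-1}$, and identifying each $H_j$ with $[0,L)^{d-1}$ one has $F_1\subset\Gamma\cap H_j$ for every $j$. On the other hand, every full $e_i$-column of $\Gamma$ with $i\ge2$ is contained in a single slice, so writing $n_i^{(j)}$ for the number of full $e_i$-columns of $\Gamma$ contained in $H_j$ yields $\sum_j(L^{d-2}-n_i^{(j)})\le xL^{d-1}$. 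A Markov estimate with threshold $x':=2(d-1)x$ and a union bound over $i=2,\ldots,d$ then show that some slice $H_{j^*}$ satisfies $n_i^{(j^*)}\ge (1-x')L^{d-2}$ for every $i\ge2$. The choice of $x'$ is compatible with the inductive hypothesis, since $x'=2(d-1)x<2(d-1)/(2d-1)!=1/((2d-1)(2d-3)!)<1/(2d-3)!$.

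Applying the inductive hypothesis inside $H_{j^*}\cong[0,L)^{d-1}$ thus produces a connected set $T\subset\Gamma\cap H_{j^*}$ with $|T|\ge (1-ax)L^{d-1}$, where $a:=2(d-1)(2d-3)!$. Since $T$ and $F_1$ are both subsets of the slice (of cardinality $L^{d-1}$), inclusion-exclusion gives $|T\cap F_1|\ge (1-(a+1)x)L^{d-1}$. The desired connected set $T^*\subset\Gamma$ is now obtained by attaching to $T$ the full $e_1$-columns through the points of $T\cap F_1$; each such column meets $T$ at its foot in $H_{j^*}$, so $T^*$ is connected, and
\begin{equation*}
|T^*|=|T|+|T\cap F_1|\,(L-1)\ge (1-(a+1)x)L^d+xL^{d-1}\ge (1-(a+1)x)L^d.
\end{equation*}
The proof closes via the factorial identity $(2d-1)!-(a+1)=(2d-3)!\,[(2d-1)(2d-2)-2(d-1)]-1=4(d-1)^2(2d-3)!-1\ge3$ valid for $d\ge2$, which yields $a+1\le(2d-1)!$ and hence $|T^*|\ge (1-(2d-1)!\,x)L^d$.

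The main technical delicacy lies in calibrating $x'$: it must be small enough ($x'<1/(2d-3)!$) to permit invocation of the inductive hypothesis, yet large enough ($x'>(d-1)x$) to guarantee existence of a good slice. The rapid growth of $(2d-1)!$ is exactly what makes these two constraints, together with the final inequality $a+1\le(2d-1)!$, simultaneously satisfiable.
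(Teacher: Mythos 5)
Your proof is correct and follows essentially the same strategy as the paper's: induction on the dimension, slicing $B_L$ perpendicular to $e_1$, a Markov/pigeonhole count (with the degraded parameter $2(d-1)x$, matching the paper's $y=2dx$ in its $d\to d+1$ indexing) to locate a slice that is good for all remaining directions, applying the inductive hypothesis there, and finally attaching the full $e_1$-columns that meet the resulting connected set. The constant bookkeeping differs only cosmetically and lands on the same bound, so there is nothing to add.
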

\begin{proof}
	We will prove inductively on the dimension that the statement of the lemma holds for all $\Gamma$ (as in the statement) and all $0<x<\frac{1}{(2d-1)!}$. 
	
	For $d=2$, the statement holds because any pair of vertical and horizontal columns shares a common vertex. Let us assume that it holds for some $d\geq 2$. We will prove it for $d+1$. Consider a subset $\Gamma$ of the $(d+1)$-dimensional box $B_L$ as in the statement of the lemma. For each $i=2,3,\ldots,d+1$, let $m_i$ be the number of $k\in \{0,1\ldots,L-1\}$ such that $\Gamma\cap \left(\{k\}\times [0,L)^d\right)$ contains at most $(1-y)L^{d-1}$ columns of the $d$-dimensional box $\{k\}\times [0,L)^d$ parallel to $e_i$, where $y=2dx$. Notice that $\Gamma$ contains at most $$m_i(1-y)L^{d-1}+(L-m_i)L^{d-1}=L^d-m_iyL^{d-1}$$ columns parallel to $e_i$, because for the remaining $L-m_i$ elements of the set $\{0,1\ldots,L-1\}$, $\Gamma\cap \left(\{k\}\times [0,L)^d\right)$ contains at most $L^{d-1}$ columns. Hence $L^d-m_iyL^{d-1}\geq (1-x)L^d$, which implies that $m_i\leq \frac{Lx}{y}=\frac{L}{2d}$.
	
	 Consider one of the remaining $L-\sum_{i=2}^{d+1} m_i\geq L/2$ sets $\Gamma\cap \left(\{k\}\times [0,L)^d\right)$ and notice that it satisfies the assumption of our inductive hypothesis for $x$ replaced by $y$. Hence $\Gamma\cap \left(\{k\}\times [0,L)^d\right)$ contains a connected set $S$ of size at least $(1-z)L^d$, where $z=(2d-1)!y=(2d)!x$. To find a connected set of the desired cardinality, consider some $l\neq k$, $l\in \{0,1,\ldots,L-1\}$ and notice that among the at least $(1-x)L^d$ columns of $B_L$ parallel to $e_l$ that lie in $\Gamma$, $S$ meets at least $(1-x-z)L^d\geq (1-(2d+1)!x)L^d$ of them. The union of $S$ with the columns that it meets forms a connected set of size at least $(1-(2d+1)!x)L^{d+1}$. This completes the proof.
\end{proof}

\begin{lemma}\label{lem:col_cap}
	For every $0<r<1$, there is $c=c(r,d)>0$ such that the following holds. For every $\Gamma\subset B_L$ that contains one vertex from at least $rL^{d-1}$ columns parallel to $e_1$, $\cp(\Gamma)\geq cL^{d-2}$.
\end{lemma}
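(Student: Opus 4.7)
The plan is to use the variational characterization of capacity in \eqref{eq:variational_meas}, combined with a clever choice of measure that exploits the column structure of $\Gamma$. The key observation is that projecting onto the hyperplane orthogonal to $e_1$ reduces the two-point function estimate to a $(d-1)$-dimensional sum which turns out to diverge only logarithmically, in a sense, yielding the right order.

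First I would pick one vertex from each of the (at least) $rL^{d-1}$ columns of $B_L$ parallel to $e_1$ that $\Gamma$ intersects, producing a set $\Gamma'\subset\Gamma$ with $|\Gamma'|\geq rL^{d-1}$ such that the projections of the vertices of $\Gamma'$ onto the hyperplane $\{0\}\times [0,L)^{d-1}$ are all distinct. By monotonicity it suffices to prove the bound for $\Gamma'$, and by \eqref{eq:variational_meas} it is enough to exhibit a probability measure $\nu$ supported on $\Gamma'$ with Dirichlet energy $E(\nu)\leq C L^{2-d}/r$. The natural candidate is the uniform measure $\nu(x)=1/|\Gamma'|$ for $x\in\Gamma'$.

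The main computation is then the uniform bound $\sup_{x\in\Gamma'}\sum_{y\in\Gamma'} g(x,y)\leq C'L$. Writing $x=(x_1,\widetilde{x})$, $y=(y_1,\widetilde{y})$ with $\widetilde{x},\widetilde{y}\in [0,L)^{d-1}$, and using the well-known estimate $g(x,y)\asymp \|x-y\|^{2-d}$ on $\mathbb{Z}^d$, I get
\[
g(x,y)\leq C\|x-y\|^{2-d}\leq C\|\widetilde{x}-\widetilde{y}\|^{2-d}\quad\text{for }y\neq x.
\]
Since the $\widetilde{y}$ for $y\in\Gamma'$ are pairwise distinct lattice points in $[0,L)^{d-1}$, the sum $\sum_{y\in\Gamma',y\neq x}\|\widetilde{x}-\widetilde{y}\|^{2-d}$ is dominated by $\sum_{w\in\mathbb{Z}^{d-1}\cap [0,L)^{d-1},w\neq \widetilde{x}} \|\widetilde{x}-w\|^{2-d}$. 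Decomposing this sum into dyadic shells around $\widetilde{x}$ of radius $k$, the $k$-th shell contains $O(k^{d-2})$ points each contributing $O(k^{2-d})$, yielding $O(L)$ after summing $k$ from $1$ to $O(L)$. Adding the diagonal term $g(x,x)=O(1)$ gives the claimed bound.

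Plugging this into \eqref{eq:cap_bound} (or equivalently into the energy of the uniform measure),
\[
E(\nu)=\frac{1}{|\Gamma'|^2}\sum_{x,y\in\Gamma'} g(x,y)\leq \frac{C'L}{|\Gamma'|}\leq \frac{C'}{r}L^{2-d},
\]
and therefore $\cp(\Gamma)\geq \cp(\Gamma')\geq (r/C')L^{d-2}$, as required. The only nontrivial step is the column-to-projection reduction together with the dyadic shell estimate; everything else is an application of the formulas from Section~\ref{sec:potential_analytic}.
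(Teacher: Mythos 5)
Your proof is correct and follows essentially the same route as the paper's: reduce to the $(d-1)$-dimensional projection (where the $\geq rL^{d-1}$ selected points have distinct images), bound $\sup_{x}\sum_{y}g(x,y)\leq C'L$ by the dyadic-shell computation, and conclude via \eqref{eq:cap_bound}. The only (cosmetic) difference is that the paper projects the set itself and compares $\cp(\Gamma)$ with $\cp$ of the projection by transporting the equilibrium measure in \eqref{eq:variational_meas}, whereas you keep a transversal subset $\Gamma'\subset\Gamma$ and absorb the projection into the pointwise bound $g(x,y)\leq C\lVert\widetilde{x}-\widetilde{y}\rVert^{2-d}$, which slightly streamlines that step.
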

\begin{proof}
	Let $F_1$ be the face of $B_L$ intersecting all columns of $B_L$ parallel to $e_1$ and let $\Gamma'$ be the projection of $\Gamma$ to $F_1$. We claim that $\cp(\Gamma)\geq t \cp(\Gamma')$ for some constant $t=t(d)>0$. Indeed, recall the variational characterization of the capacity \eqref{eq:variational_meas}. Let $\nu'$ be the probability measure supported on $\Gamma'$ such that $\cp(\Gamma')=E(\nu')^{-1}$ and define $\nu$ to be the probability measure supported on $\Gamma$ such that $\nu(x)=\nu'(x')$, where $x'$ is the projection of $x$ to $F_1$. Then $\cp(\Gamma)\geq  E(\nu)^{-1}$. Notice that by projecting $\Gamma$ onto $F_1$, the distance between its vertices decreases. Since the Green's function $g(x,y)$ is asymptotically decreasing in the distance $\Vert x-y\Vert$, we have $E(\nu')\geq t(d)E(\nu)$ and the claim follows.
	
	We will now lower bound the capacity of $\Gamma'$ by applying \eqref{eq:cap_bound}. To this end, notice that $\Gamma'$ contains at least $rL^{d-1}$ vertices and consider some vertex $x\in \Gamma'$. Since the number of vertices in $F_1$ that are at $\Vert \cdot \Vert_{\infty}$-distance $k$ from $x$ is of order $k^{d-2}$, it is not hard to see that there are constants $t_1=t_1(d,r)>0$ and $t_2=t_2(d,r)>0$ such that for at least $t_1L$ values of $k\in\{0,1,\ldots,L-1\}$, $\Gamma'$ contains at least $t_2 k^{d-2}$ vertices at distance $k$ from $x$. The desired lower bound on $\cp(\Gamma')$ follows now from \eqref{eq:cap_bound}.
\end{proof}


\end{document}